\PassOptionsToPackage{dvipsnames}{xcolor}
\documentclass[11pt,review]{amsart}

\usepackage[utf8]{inputenc}

\usepackage[margin=1in]{geometry}

\usepackage{amsmath, amsthm, amssymb}
\usepackage{mathtools}
\usepackage{mathrsfs}
\usepackage{stmaryrd}
\usepackage{stix}

\usepackage{siunitx}

\usepackage{graphicx}
\graphicspath{{fig/}}

\usepackage{booktabs}
\usepackage{multirow}

\usepackage{float}
\usepackage{caption}
\usepackage{subcaption}

\usepackage{algorithm}
\usepackage{algorithmic}

\usepackage{tikz}
\usepackage{standalone}

\usepackage{calligra}
\usepackage{oubraces}

\usepackage{soul}   

\usepackage{comment}

\usepackage[sort&compress,numbers,square]{natbib}
\usepackage{hyperref}
\hypersetup{
  colorlinks=true,
  linkcolor=Purple,
  citecolor=Orange,
  urlcolor=Purple
}

\theoremstyle{plain}
\newtheorem{theorem}{Theorem}[section]

\newtheorem{lemma}[theorem]{Lemma}

\theoremstyle{definition}

\newtheorem{Remark}[theorem]{Remark}

\newcommand\calP{{\mathcal{P}}}
\newcommand\calT{{\mathcal{T}}}

\newcommand{\mesh}{\calP}

\newcommand{\mean}[1]{\{\kern-1.1mm\{#1\}\kern-1.1mm\}}          

\newcommand{\ndg}[1]{| \kern -.25mm \|{#1}| \kern -.25mm \|}
\newcommand{\ndgp}[1]{| \kern -.25mm \|{#1}| \kern -.25mm \|_{\rm \mesh}}
\newcommand{\ndgps}[1]{| \kern -.25mm \|{#1}| \kern -.25mm \|_{\rm \mesh,s}}
\newcommand{\C}{\mathbb{C}}
\newcommand{\R}{\mathbb{R}}
\newcommand{ \Clip}{C_{\textrm{lip}}}
\newcommand{ \Cemb}{C_{\textrm{emb}}}

\newcommand{\ncdg}[1]{| \kern -.25mm \|{#1}| \kern -.25mm \|_{\rm DG}}

\newcommand{\real}{\mathrm{Re}}

\newcommand{\bnabla}{\nabla_{\kern-.05cm\mesh}^{}}

\title[Induction Heating Problem]{Analysis and approximation of a \\ two-dimensional induction heating problem}

\author[G.R.~Barrenechea]{Gabriel R. Barrenechea}
\address[G.R.~Barrenechea]{Department of Mathematics and
    Statistics, University of Strathclyde, 26 Richmond Street,
    Glasgow, G1 1XH United Kingdom}
\email{gabriel.barrenechea@strath.ac.uk}

\author[K.~MacKenzie]{Katherine MacKenzie}
\address[K.~MacKenzie]{Institute for Mathematical Innovation, University of Bath, Building 4 East South, Room 4.10, Bath, BA2 7AY}
\email{km2560@bath.ac.uk}

\author[A.J.~Salgado]{Abner J. Salgado}
\address[A.J.~Salgado]{Department of Mathematics, University of Tennessee, Knoxville, TN 37996, USA}
\email{asalgad1@utk.edu}

\begin{document}
\maketitle

\begin{abstract}
In this paper, we analyse the existence of solutions and finite element approximation of a steady-state two-dimensional induction heating problem.  One of the main difficulties of the problem is its right-hand side which, at a first sight, is only integrable.  Using a priori regularity results for the PDEs involved it is shown that the natural weak formulation  of the problem can be justified. Then, we study the finite element approximation and prove that the standard Galerkin FEM converges in convex domains and under suitable conditions on the mesh. We improve on this result by applying the recently-proposed bound-preserving method (BPM) to the heat equation, and show that this method converges to a solution of the  problem under less stringent conditions on the domain and the mesh.  As these analyses are carried out without any assumption on regularity of the solutions, then the convergence of the finite element method also proves existence of solutions. Several numerical experiments confirm the theoretical results, and showcase the improvement provided by the use of the bound-preserving method over the standard finite element method.
\end{abstract}

\section{Introduction}
\label{sec:intro}

\subsection*{Description of the Problem}

 Induction heating is a method commonly used in industry for heating electrically conductive materials using a magnetic field.  The physical process can be described as follows: a solid piece of metal (the billet)
passes through an array of  coils that generate a magnetic field that heats the billet to the desired temperature.  This is a process
widely used in engineering and materials science, and has received a wide attention both in the engineering and applied mathematics
community (see, e.g., the very comprehensive monograph \cite{Touzani-Rappaz-2014}).

The full model consists of the time-dependent Maxwell's equation in the air and billet, coupled with the heat equation inside the billet.
In its full generality, the problem is time-dependent, posed on a moving domain, the coupling is nonlinear, includes nonlinear radiation boundary conditions on the boundary of
the billet, and the material coefficients can depend on the temperature and magnetic field. Thus, the numerical
approximation (and the mathematical analysis) of such a problem is a challenging task.  As a consequence
several simplifications of the model have been proposed.  Probably the most basic one consists of reducing the
problem to a two-dimensional setting,  where the billet is considered infinite and the magnetic
field parallel to the billet itself.  In this case, the problem is reduced to a nonlinear PDE system inside the
billet. More precisely,  let $\Omega \in \R^2$ be an open, bounded  polygon with boundary $\partial \Omega$. Then the steady-state simplified solenoidal induction heating problem reads: find the magnetic field $H:\Omega\to \C$ and temperature $u:\Omega\to\R$ such that 
\begin{subequations}\label{eq:strong_solenoidal_theory}
	\begin{align}
		-\Delta H + i\omega \mu(u) H &= 0 \qquad \qquad \text{ in } \ \Omega, \label{eq:strong_magn}\\
		H & = H_\circ  \qquad \quad  \ \text{ on } \partial \Omega, \label{eq:strong_magn_bc} \\
		-\Delta u &= |\nabla H|^2 \qquad \text{ in } \ \Omega, \label{eq:strong_heat}\\
		u&=0  \qquad \qquad \ \ \text{on } \partial \Omega. \label{eq:strong_heat_bc}
	\end{align}
\end{subequations}
Here: 
\begin{itemize}
	\item $\omega$ is a positive constant representing the angular frequency of the alternating current,
	\item $H_\circ\in\C$ is a  constant representing the magnetic field strength in the surrounding air, and
	\item $\mu(\cdot)$ is a function depending on temperature representing the magnetic permeability of the material. We assume that $\mu(\cdot)$ is Lipschitz continuous, that is, that there exists $\Clip>0$ such that
	\begin{equation}\label{Eq:mu-Lip}
	|\mu(s)-\mu(t)|\le \Clip\, |s-t|\qquad\forall\, s,t\in\R\,,
	\end{equation}
	 and that there exist positive constants $\mu_\circ$ and $\mu^\circ$ such that
	\begin{equation}\label{eq:mu_is_bounded}
		0 < \mu_\circ \leq \mu(s) \leq \mu^\circ \qquad \forall \ s \in \R.
	\end{equation}
\end{itemize} 

\begin{Remark}
The above system is, in fact, a further simplification of the two-dimensional solenoidal problem presented in
\cite{Touzani-Rappaz-2014},  where the partial differential equations take the form
	\begin{align*}
		-\textrm{div}(\sigma^{-1}(u) \nabla H) + i\omega \mu(u) H &= 0 && \text{ in } \ \Omega, \\
		-\textrm{div}(\kappa (u) \nabla u) &= \frac{1}{2\sigma (u)}|\nabla H|^2 && \text{ in } \ \Omega\,.
	\end{align*}
In this work we have set the electrical conductivity $\sigma(\cdot)$ and thermal conductivity $\kappa(\cdot)$ equal to one.  This choice has been made mainly for simplicity.  In fact,  considering these coefficients
as not constant adds difficulties that are, for the most part, technical distractions. 
Similarly,  in the steady-state the boundary of the billet is subject to non-homogeneous Dirichlet conditions,
although we have opted to set the boundary to zero to avoid technical diversions. $\Box$
\end{Remark}

The main difficulty with the solenoidal induction heating problem is the source term in \eqref{eq:strong_heat}. Following standard practices, if we assume that the weak formulation of \eqref{eq:strong_magn} is imposed in $H^1$, then the source term of \eqref{eq:strong_heat} appears to belong only to $L^1$. If this is the case, \eqref{eq:strong_heat} cannot be written as a variational problem with the trial and test functions belonging only to $H^1_0$, so we cannot apply Lax-Milgram's Lemma. Additionally, since $L^1$ is neither reflexive nor it has a predual, we cannot apply the generalised version of Lax-Milgram's Lemma either \cite[Theorem~25.9]{Ern-Guermond-II-2021}.  In this paper, we resolve this issue by showing that any
possible weak solution of \eqref{eq:strong_solenoidal_theory} satisfies $|\nabla H|^2 \in L^2(\Omega)$, and thus has additional regularity that
allows the writing of a standard weak formulation.

\subsection*{Previous Literature on Well-Posedness and Finite Element Analysis}

Up to our best knowledge, the first paper that analyses the steady-state quasi-static solenoidal induction heating problem is  \cite{Clain-Touzani-1997a}, where the authors prove existence of solutions in $W^{1,p}$  for $2 < p < \infty$, under the assumption that  $\Omega$ has a $C^2$ boundary.  The analysis in \cite{Clain-Touzani-1997a} was extended to the case of unbounded material coefficients in \cite{Clain-Touzani-1997b}. 
The problem including a time-dependent heat equation was considered in \cite{Parietti-Rappaz-1998},  again under the assumption of
a smooth boundary.  In \cite{Parietti-Rappaz-1999}  the results from \cite{Parietti-Rappaz-1998} are extended to show that under even stronger regularity assumptions, there exists a unique finite element solution that converges to the continuous solution, assuming that the mesh is quasi-uniform.  
In all papers just mentioned $\mu(\cdot)$ is assumed to be a positive constant,  while the coefficients $\kappa(\cdot)$ and $\sigma(\cdot)$ are assumed to be Lipschitz continuous, and the problem is rewritten using a Kirchoff transformation. 

Other than the papers mentioned above there is, up to our best knowledge,  a lack of works dedicated to the analysis and approximation of the system \eqref{eq:strong_solenoidal_theory},  especially in terms of
finite element methods. Now, a problem related to  \eqref{eq:strong_solenoidal_theory} is the thermistor problem. This problem, which contains a real-valued version of \eqref{eq:strong_solenoidal_theory}, has been analysed  extensively, and the analysis of the continuous problem can be
found, e.g., in  \cite{Roubicek-2013}.  While there are numerous works proposing finite element methods for this problem, their convergence
analysis usually requires either a mixed finite element approach \cite{Zhu-2006,Zhu-2011}, or uniform $L^\infty$ estimates on
the discrete solution \cite{Holst2010}.   The former of these approaches either increase the computational cost,  while for the latter, proving uniform bounds on the discrete solution requires to impose that the computational mesh satisfies appropriate geometric conditions, which are especially challenging to satisfy (see, e.g., \cite{Barrenechea-John-Knobloch-2025} for an exposition on this). 
These restrictions were lifted in the work \cite{Jensen-Malqvist-2012}, where, first a change
of variable in the problem was made in order to prove a uniform bound on the continuous solution. Then,  a truncation was introduced
 in order to mimic this bound at the discrete level, and so circumvent the need to prove uniform $L^\infty$ bounds on the discrete
 solution. This approach was then extended to the time dependent case in \cite{Jensen-Malqvist-2022}.

\subsection*{Our Contribution}
Based on the above discussion, in this work our aim is twofold. First, we are interested in proposing a finite element discretisation
for \eqref{eq:strong_solenoidal_theory} that can be proven convergent to a solution of \eqref{eq:strong_solenoidal_theory} without assuming a regular, convex domain, and without the need to impose stringent conditions on the computational mesh.  As a byproduct of this
analysis, we will be able to prove existence of weak solutions to \eqref{eq:strong_solenoidal_theory} in a setting slightly more
general than the one present in previous works. 

The weak formulation of this problem has been carried out in the past by assuming that
$\Omega$ has a $C^2$ boundary.  In this work we circumvent this restriction by using a regularity
result proved in \cite{Jerison-Kenig-1995} which allows us to justify an extra regularity for
any solution of \eqref{eq:strong_magn} on a Lipschitz domain.
This will allow us to write a weak formulation for the problem that is suitable for finite element methods. Then,
to motivate our approach, we introduce a standard finite element method for the model problem.  In the process of presenting its analysis,
the need to replicate the stability for the discrete magnetic field leads to imposing two essential conditions, namely, that the domain is
convex, and that the  family of meshes is quasi-uniform. Under these conditions, we are able to prove existence of solutions, and their strong convergence, up to subsequences in the case the continuous solution is not unique.  

To avoid the restrictions mentioned above, we then propose a bound-preserving finite element method for the heat equation in
the model system. The method is the application of the one proposed in \cite{Barrenechea-2024} to this model problem, but with a fundamental variation, as there are no natural ways of obtaining upper bounds on the temperature for the induction heating problem. In fact,
the change of variables made in \cite{Jensen-Malqvist-2012} cannot be carried out in the current case, and so,  even if it can be proven fairly
easily that the temperature is positive, obtaining an upper bound for it is far from being a trivial task.  To overcome this difficulty, we
build the method as a double sequence of discrete solutions indexed by $(h,k)_{h>0, k\in\mathbb{N}}$, where $h$ is the mesh size and $k$ is an increasing sequence of upper bounds.  Then,  we refine
the results from \cite{Barrenechea-2024} to show that, for a given upper bound $k$, the finite element method converges as $h\to 0$ to the solution of a variational inequality of double obstacle type.  So, after generalising classical regularity results for the obstacle problem slightly,  we show that for $k$ large enough
this limiting function actually solves the weak formulation (and not only a variational inequality), thus proving the convergence
of the method to a weak solution of \eqref{eq:strong_solenoidal_theory}, and extending the existence of solutions to non-convex domains, as a byproduct.

The plan of the paper is as follows.  First, in Section~\ref{sec:general_setting} we start with some preliminary results, and in Section~\ref{sec:WP} we present the weak formulation, and justify why it can be written.  Section~\ref{sec:finite_element} is devoted to present and analyse the standard finite
element method for \eqref{eq:strong_solenoidal_theory}. Then, in Section~\ref{sec:BPM_theory} we present the bound-preserving method, where we first recall it for a scalar problem and prove results not previously available for it, and then we prove existence of solutions and convergence to a solution
of the continuous problem. In Section~\ref{sec:numerics} we report the results of a series of numerical experiments, and conclude the work
with some conclusions and open questions in Section~\ref{Sec:Concl}.

\section{General Setting and Preliminary Results}\label{sec:general_setting}
Let $\Omega\subseteq\R^2$ be an open, bounded,  polygonal, Lipschitz  domain.  We use standard notation for Sobolev spaces, aligned with, e.g.,  \cite{Ern-Guermond-I-2021}. In particular, 
we denote by
$\|\cdot\|_{0,p,\Omega}$ the $L^p(\Omega)$-norm, omitting the subscript $p$ when
$p=2$. For $s \geq 0$ and $p \in [1,\infty]$, we write
$\|\cdot\|_{s,p,\Omega}$ (resp.\ $|\cdot|_{s,p,\Omega}$) for the norm (resp.
seminorm) in $W^{s,p}(\Omega)$, and again drop the subscript $p$ when
$p=2$. We set
\[
W^{1,p}_0(\Omega) = \{ v \in W^{1,p}(\Omega) : v=0 \text{ on } \partial \Omega \},
\]
and denote by $W^{-1,p'}(\Omega)$ the dual of $W^{1,p}_0(\Omega)$,  where $p'$ is
the dual exponent of $p$, that is, $1/p + 1/{p'}=1$. 
We will also use intensively spaces of complex-valued functions, with the usual notations for Lebesgue and
Sobolev spaces. We will distinguish the inner product and norms as follows:
\begin{equation*}
(f,g)_{\Omega,\C}\coloneqq\int_\Omega  f(\boldsymbol{x}) \bar{g}(\boldsymbol{x}) \mathrm{d}\boldsymbol{x}
\quad\textrm{and}\quad \|f\|_{0,\Omega,\C}\coloneqq (f,f)^{1/2}_{\Omega,\C}\,,
\end{equation*}
for every $f,g\in L^2(\Omega;\C)$.  The rest of the notations follow from this convention in a natural way.
Finally, in the sequel we
do not distinguish between inner products and duality pairings for
scalar- or vector-valued functions. 

Throughout this work we will use repeatedly the following Poincar\'e's inequality
for every $1\le p \le\infty$, there exists $C_P>0$ (depending on $p$ and the diameter of $\Omega$, but we shall not track this dependence),
such that
\begin{equation}\label{Poincare}
\|v\|_{0,p,\Omega}\le C_P\,|v|_{1,p,\Omega}\qquad\forall\, v\in W^{1,p}_0(\Omega)\,.
\end{equation}
This inequality is also valid in the complex setting, by just applying it separately to the real and complex parts of the elements 
in $W^{1,p}_0(\Omega;\C)$. 
\footnotemark\footnotetext{For its proof, see,  e.g. \cite[Corollary~9.198]{Brezis-2011}  for the proof for $p<\infty$,  while the proof for $p=\infty$ follows by the Sobolev
embedding  $v \in W^{1,\infty}_0(\Omega) \hookrightarrow W^{1,t}_0(\Omega) \hookrightarrow C(\bar\Omega)$,  where $t > 2$.  As a consequence
\[
  \| v \|_{0,\infty,\Omega} \leq \Cemb \| \nabla v \|_{0,t,\Omega} \leq \Cemb |\Omega|^{1/t} \| \nabla v \|_{0,\infty,\Omega}
\]}

In addition, we recall Sobolev's embedding Theorem, Rellich-Konrachov's Theorem (see \cite[Chapter~9]{Brezis-2011} for precise statements): 
For $1\le p\le\infty$
we have that
\begin{itemize}
\item $W^{1,p}(\Omega)$ is continuously embedded in $L^q(\Omega)$, for all $q\in[1,2p/(2-p))$ if $p<2$, all
$q\in[1,\infty)$ if $p=2$,  and all $q\in [1,\infty]$ if $p>2$;
\item as a consequence, there exists $\Cemb>0$ such that, for all $p,q$ as in the previous point, the following inequality
holds
\begin{equation}\label{Ineq-Embedding}
\|v\|_{0,q,\Omega}\le \Cemb\|v\|_{1,p,\Omega}\qquad\forall\, v\in W^{1,p}(\Omega)\,;
\end{equation}
\item the above inclusions are compact.
\end{itemize}

In addition, we explicitly define the sets of functions that satisfy certain bounds a.e.  in the domain $\Omega$.  More
precisely, for $\epsilon,k>0$, we define
\begin{equation*}
	\mathcal{D}_{\epsilon,k}(\Omega) \coloneqq \{f \in \mathcal{D}(\Omega): -\epsilon \leq f(\boldsymbol{x}) \leq k \text{ a.e. in } \Omega\},
\end{equation*}
and 
\begin{equation}
\label{eq:DefOfVkeps}
	V^{\epsilon,k} \coloneqq \{v \in H^1_0(\Omega): -\epsilon  \leq v(\boldsymbol{x}) \leq k \text{ a.e. in } \Omega\}.
\end{equation}

These convex sets will be used to prove convergence of the BPM to a solution of the solenoidal induction heating problem, where it will be instrumental that  $V^{\epsilon,k}$ is closed in $H^1_0(\Omega)$. This is stated in the following result, whose proof is analogous to that of \cite[Proposition II.5.2]{Stampacchia-2000}.

\begin{lemma}\label{lem:D_k_closed_in_H1}
	The closure of $\mathcal{D}_{\epsilon,k}(\Omega)$ with respect to the $H^1_0(\Omega)$-norm is $V^{\epsilon,k}$.
\end{lemma}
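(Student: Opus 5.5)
We prove the two inclusions separately.

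\emph{The closure is contained in $V^{\epsilon,k}$.} Let $f_n\in\mathcal{D}_{\epsilon,k}(\Omega)$ with $f_n\to v$ in $H^1_0(\Omega)$. Then $v\in H^1_0(\Omega)$, since that space is closed. Also $f_n\to v$ in $L^2(\Omega)$, so a subsequence converges a.e. in $\Omega$. Each $f_n$ satisfies $-\epsilon\le f_n\le k$ pointwise, and these bounds pass to the a.e. limit. Hence $v\in V^{\epsilon,k}$. (Equivalently, $V^{\epsilon,k}$ is convex and closed in $L^2$, hence closed in $H^1_0$.)

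\emph{$V^{\epsilon,k}$ is contained in the closure.} This is the substantive direction, and we follow the idea of \cite[Proposition II.5.2]{Stampacchia-2000}. Fix $v\in V^{\epsilon,k}$ and $\delta>0$.

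First, shrink the range strictly inside the interval. For $\theta\in(0,1)$ set $v_\theta=\theta v$. Then $-\theta\epsilon\le v_\theta\le\theta k$, and $v_\theta\to v$ in $H^1_0$ as $\theta\to1$. So it suffices to approximate a function $w\in H^1_0(\Omega)$ satisfying $-\epsilon'\le w\le k'$ with $\epsilon'<\epsilon$ and $k'<k$.

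Second, take any $\varphi_n\in\mathcal{D}(\Omega)$ with $\varphi_n\to w$ in $H^1_0$. The $\varphi_n$ need not respect the bounds, so truncate them. Define the Lipschitz map $T(s)=\max(-\epsilon',\min(s,k'))$. By continuity of truncation in $H^1$ (Stampacchia's theorem on $\max/\min$ in $W^{1,p}$), $T(\varphi_n)\to T(w)=w$ in $H^1_0$. However, $T(\varphi_n)$ is only Lipschitz with compact support, not smooth.

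Third, mollify. Let $\rho_\eta$ be a standard mollifier with $\eta$ smaller than the distance from $\operatorname{supp}\varphi_n$ to $\partial\Omega$. Then $\rho_\eta*T(\varphi_n)\in\mathcal{D}(\Omega)$. Because $\rho_\eta\ge0$ and $\int\rho_\eta=1$, the mollified function still takes values in $[-\epsilon',k']\subset[-\epsilon,k]$, so it lies in $\mathcal{D}_{\epsilon,k}(\Omega)$. Moreover, $\rho_\eta*T(\varphi_n)\to T(\varphi_n)$ in $H^1$ as $\eta\to0$. A diagonal argument over $\theta$, $n$ and $\eta$ then produces the approximating sequence. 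Note that the first step is actually unnecessary, since mollification preserves the bounds exactly; we keep it only for robustness.

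The main obstacle is the continuity of the truncation map $\varphi\mapsto T(\varphi)$ in $H^1$. Weak convergence is easy. For strong convergence, write
\[
\nabla T(\varphi_n)=\chi_{\{-\epsilon'<\varphi_n<k'\}}\nabla\varphi_n .
\]
Pass to an a.e.-convergent subsequence and use dominated convergence. The delicate set is where $w$ equals $-\epsilon'$ or $k'$, but $\nabla w=0$ a.e. there. Combined with $\nabla\varphi_n\to\nabla w$ in $L^2$, this gives the convergence. Finally, a subsequence argument extends the convergence to the whole sequence.
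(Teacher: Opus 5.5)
Your proof is correct and follows essentially the argument the paper defers to via \cite[Proposition II.5.2]{Stampacchia-2000}. You approximate $v\in V^{\epsilon,k}$ by test functions, truncate at the levels $-\epsilon$ and $k$ (which keeps compact support because $0\in(-\epsilon,k)$), and then mollify, which preserves both the bounds and the compact support; your check of strong $H^1$ continuity of the truncation, using $\nabla w=0$ a.e.\ on $\{w=-\epsilon'\}\cup\{w=k'\}$ together with $\nabla\varphi_n\to\nabla w$ in $L^2$, correctly handles the only delicate step.
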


As a final preliminary we recall some higher integrability properties of the Poisson problem in Lipschitz domains which we will use repeatedly. As shown in \cite[Theorem~0.5(a)]{Jerison-Kenig-1995}, there is a $P>4$ (recall that $\Omega\subset\R^2$), such that if $F \in W^{-1,p}(\Omega)$ for $p \in [P',P]$ then $\Phi$, the weak solution to
\[
  -\Delta \Phi = F \quad \text{ in } \Omega, \qquad \qquad \Phi = 0, \text{ on } \partial\Omega,
\]
satisfies
\begin{equation}
\label{eq:JKWm1p}
  \| \nabla \Phi \|_{0,p,\Omega} \leq C_{JK}(p) \| F \|_{-1,p,\Omega}, \qquad \forall p \in [P',P].
\end{equation}
In addition, if $F \in L^2(\Omega)$, we have $\Phi \in H^{3/2-\delta}(\Omega)$ for every $\delta>0$; see \cite[Theorem~B.2]{Jerison-Kenig-1995}.

%

 \section{The Weak Formulation}\label{sec:WP}

We begin by providing a weak formulation for \eqref{eq:strong_solenoidal_theory}. Namely,
find $(H,u) \in H^1(\Omega;\C) \times H^1_0(\Omega)$ such that $H = \tilde{H} + H_\circ$, $\tilde{H} \in H^1_0(\Omega;\C)$, and
\begin{subequations} \label{eq:weak_existence_form}
	\begin{align}
		(\nabla \tilde{H}, \nabla Q)_{\Omega, \C}+  \left(i \omega \mu(u)\tilde{H}, Q\right)_{\Omega, \C} &= (i \omega \mu(u)H_\circ, Q)_{\Omega, \C}, \label{eq:weak_H_coupled}\\
		(\nabla u, \nabla v)_{\Omega} &= \left(|\nabla H|^2, v\right)_{\Omega},\label{eq:weak_u_coupled}
	\end{align}
\end{subequations}
for every $(Q,v) \in H^1_0(\Omega;\C) \times H^1_0(\Omega)$.

At first sight, the right-hand side of  \eqref{eq:strong_heat} belongs only to $L^1(\Omega)$. So, the above weak formulation cannot, in
principle, be written.  For this reason in the rest of this section
we will prove results that justify why any possible solution of \eqref{eq:weak_H_coupled} is, in fact, more regular than what we might expect at first,  which will justify the choice of test functions in \eqref{eq:weak_u_coupled}.  We start with the following result concerning the regularity, and uniform boundedness, of $H$.

\begin{lemma}\label{lem:H_bounded_H1}
	Let $z \in H^1_0(\Omega)$ be arbitrary. Then, there exists a unique solution $\tilde{H}\in H^1_0(\Omega;\C)$ of the following  problem:
	\begin{equation}\label{eq:H_tilde_weak_form}
	\left(\nabla \tilde{H} , \nabla Q \right)_{\Omega, \C}  + \left( i \omega \mu(z)\tilde{H},Q\right)_{ \Omega, \C}  = \left(i\omega \mu(z)H_\circ ,Q \right)_{\Omega, \C},
\end{equation}
for all $Q \in H^1_0(\Omega;\C)$.  In addition,  there exists a positive constant $C_1$ depending only on $\Omega$, $\omega$, $\mu^\circ$, and $H_\circ$ (but, crucially, not on $z$) such that,  if we define $H = \tilde{H} + H_\circ$, then
	\begin{equation}\label{Eq:Bound-H-H1}
		\|H\|_{1,\Omega, \C } \leq C_1\,.
	\end{equation}
Moreover,  $H \in W^{1,4}(\Omega;\mathbb{C})$, and  there exists a constant $C_2$ depending only on $\Omega$, $\omega$, $\mu^\circ$, and $H_\circ$ such that
	\begin{equation}\label{Eq:Bound-H-W14}
		\|H\|_{1,4,\Omega, \C}^{} \leq C_2.
	\end{equation}
\end{lemma}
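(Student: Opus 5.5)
Existence and uniqueness will follow from the complex Lax--Milgram lemma applied to the sesquilinear form
\[
a_z(\tilde H,Q)\coloneqq (\nabla \tilde{H},\nabla Q)_{\Omega,\C}+(i\omega\mu(z)\tilde H,Q)_{\Omega,\C}
\]
on $H^1_0(\Omega;\C)$. Since $z$ is measurable and $\mu$ is continuous and bounded by \eqref{eq:mu_is_bounded}, the function $\mu(z)$ is measurable with $\mu_\circ\le\mu(z)\le\mu^\circ$. This gives continuity of $a_z$ with a constant depending only on $\omega$, $\mu^\circ$ and $C_P$. For coercivity, the real part of the zeroth-order term vanishes because $\mu(z)$ is real, so
\[
\real\, a_z(Q,Q)=|Q|_{1,\Omega,\C}^2,
\]
which controls $\|Q\|^2_{1,\Omega,\C}$ by \eqref{Poincare}. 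The right-hand side $Q\mapsto(i\omega\mu(z)H_\circ,Q)_{\Omega,\C}$ is bounded by $\omega\mu^\circ|H_\circ||\Omega|^{1/2}\|Q\|_{0,\Omega,\C}$. Testing with $Q=\tilde H$ and taking real parts yields
\[
|\tilde H|^2_{1,\Omega,\C}\le \omega\mu^\circ|H_\circ||\Omega|^{1/2}C_P|\tilde H|_{1,\Omega,\C}.
\]
Hence $\|\tilde H\|_{1,\Omega,\C}$ is bounded independently of $z$, and adding the constant $H_\circ$ (whose $H^1$ norm is $|H_\circ||\Omega|^{1/2}$) gives \eqref{Eq:Bound-H-H1}.

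For the $W^{1,4}$ bound I will use the Jerison--Kenig estimate \eqref{eq:JKWm1p}, applied separately to the real and imaginary parts of $\tilde H$. These parts solve Poisson problems with zero Dirichlet data and right-hand side
\[
F\coloneqq -i\omega\mu(z)\tilde H + i\omega\mu(z)H_\circ=-i\omega\mu(z)H,
\]
split into real and imaginary parts. Since $4\in[P',P]$ (because $P>4$), it suffices to bound $\|F\|_{-1,4,\Omega}$. By duality, $W^{-1,4}$ is the dual of $W^{1,4/3}_0$, and $W^{1,4/3}_0(\Omega)\hookrightarrow L^{q}(\Omega)$ for every $q<4$, in particular $q=2$. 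Thus any $L^2$ function defines an element of $W^{-1,4}$, with
\[
\|F\|_{-1,4,\Omega}\le \Cemb\|F\|_{0,\Omega}\le \Cemb\,\omega\mu^\circ\|H\|_{0,\Omega,\C}\le \Cemb\,\omega\mu^\circ C_1 .
\]
Then \eqref{eq:JKWm1p} gives $\|\nabla\tilde H\|_{0,4,\Omega}\le C$ independently of $z$. Poincaré \eqref{Poincare} with $p=4$ controls $\|\tilde H\|_{0,4,\Omega}$, and since $\nabla H=\nabla\tilde H$ while $\|H_\circ\|_{0,4,\Omega}=|H_\circ||\Omega|^{1/4}$, we obtain \eqref{Eq:Bound-H-W14}.

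The main point of care is the second step: justifying that the weak solution of \eqref{eq:H_tilde_weak_form}, viewed as the Poisson problem $-\Delta\tilde H=F$, is the same $\Phi$ to which \eqref{eq:JKWm1p} applies. This holds by uniqueness of the $H^1_0$ weak solution, since $F\in L^2\subset H^{-1}$. We also need the embedding $L^2\hookrightarrow W^{-1,4}$ in two dimensions, which follows from the Sobolev embedding listed in Section~\ref{sec:general_setting}. The rest is bookkeeping to verify that no constant depends on $z$, which holds because only $\mu^\circ$ ever enters.
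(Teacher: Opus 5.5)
Your proposal is correct and follows the paper's proof essentially step by step: the complex Lax--Milgram lemma, testing with $Q=\tilde H$ and taking real parts so that the purely imaginary zeroth-order term drops, and then the Jerison--Kenig estimate \eqref{eq:JKWm1p} with $p=4$ applied to the real and imaginary parts, combined with the embedding $L^2(\Omega)\hookrightarrow W^{-1,4}(\Omega)$. There is one harmless slip: for the problem as stated, $F=i\omega\mu(z)(H_\circ-\tilde H)$, which is not $-i\omega\mu(z)H$ because $H=\tilde H+H_\circ$. You should therefore bound $\|F\|_{0,\Omega}\le\omega\mu^\circ\bigl(\|\tilde H\|_{0,\Omega,\C}+|H_\circ|\,|\Omega|^{1/2}\bigr)$ instead, which gives the same $z$-independent conclusion.
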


\begin{proof}
	Fix $z \in H^1_0(\Omega)$.  The well-posedness of \eqref{eq:H_tilde_weak_form} follows from Lax-Milgram's Lemma (see, e.g., 
	\cite[Lemma~25.2]{Ern-Guermond-II-2021} for its complex version). Taking $Q = \tilde{H}$ as a test function in \eqref{eq:H_tilde_weak_form} and taking real parts in both sides of the equation we get
		\begin{equation*}
		\real\left(\| \nabla \tilde{H} \|_{0, \Omega, \C}^2 + i \omega \| \sqrt{\mu(z)}\tilde{H} \|^2_{0, \Omega, \C}\right)  = \real\left(\left( i\omega\mu(z) H_\circ,\tilde{H} \right)_{\Omega, \C}\right), \\
		\end{equation*}
		which, using  the Young and Poincar\'e inequalities, implies that
		\begin{equation*}
		 \| \nabla \tilde{H} \|_{0, \Omega, \C}^2   = \real\left(\left( i\omega\mu(z) H_\circ, \tilde{H} \right)_{\Omega, \C}\right) \leq \omega \mu^\circ\|H_\circ\|_{0, \Omega, \C}\|\tilde{H}\|_{0, \Omega, \C}\leq \omega \mu^\circ C_P \|H_\circ\|_{0, \Omega, \C }\|\nabla \tilde{H}\|_{0, \Omega, \C}.
		\end{equation*}
	Dividing through by $ \| \nabla \tilde{H} \|_{0, \Omega, \C}$  and using that the seminorm and norm in $H^1_0(\Omega;\C)$ are
	equivalent we get that $
		 \|\tilde{H}\|_{1, \Omega, \C}^{} \leq C\, \omega \mu^\circ  \|H_\circ\|_{0, \Omega, \C }. $
	Finally,  since $H_\circ$ is a constant, we have that $\|H_\circ\|_{1,\Omega, \C} = \|H_\circ\|_{0,\Omega, \C}$, and so
	\begin{equation*}
		\|H\|_{1, \Omega, \C}^{} 
		 \leq \|\tilde{H} \|_{1, \Omega, \C}^{} + \| H_\circ\|_{1, \Omega, \C}^{}
		 \leq C\, \omega \mu^\circ \|H_\circ\|_{0, \Omega, \C }^{} + \| H_\circ\|_{0, \Omega, \C}^{}
		\eqqcolon C_1^{},
	\end{equation*}
	which proves  \eqref{Eq:Bound-H-H1}.
	
	To prove \eqref{Eq:Bound-H-W14} we first split equation \eqref{eq:H_tilde_weak_form} into real and complex parts. Writing $\tilde{H}  = H - H_\circ$ and $\tilde{H} = \tilde{H}_r^{} + i\tilde{H}_c^{}$, where $\tilde{H}_r^{}, \tilde{H}_c^{} \in H^1_0(\Omega)$ are real functions, and writing $H_\circ = H_{\circ, r}^{} + iH_{\circ, c}^{}$ it follows that,  in the sense of distributions,
	\begin{equation*}
		-\Delta \tilde{H}_r - \omega \mu(z)\tilde{H}_c + i(-\Delta \tilde{H}_c + \omega \mu(z)\tilde{H}_r) = -\omega \mu(z)H_{\circ, c} + i \omega \mu(z)H_{\circ, r},
	\end{equation*}
	which implies that the following system of equations is satisfied:
	\begin{align}
		-\Delta \tilde{H}_r &= \omega \mu(z)(\tilde{H}_c -H_{\circ, c}) \label{eq:real_laplace},\\
		-\Delta \tilde{H}_c &= - \omega \mu(z)(\tilde{H}_r - H_{\circ, r}). \label{eq:complex_laplace}
	\end{align}
	For a fixed $z \in H^1_0(\Omega)$ there exists a unique solution of \eqref{eq:H_tilde_weak_form}, and so there exists a unique solution of the system \eqref{eq:real_laplace},\eqref{eq:complex_laplace}. Since $\mu(\cdot) \in L^\infty(\Omega)$, it follows from \eqref{Eq:Bound-H-H1} that  
	\begin{equation}\label{eq:rhs_poisson_bounded}
		\|\omega \mu(z)(\tilde{H}_c-H_{\circ, c})\|_{0, \Omega} \leq \omega \mu^\circ (\|\tilde{H}_c\|_{0,\Omega} + \|H_{\circ, c}\|_{0,\Omega} )\leq \omega \mu^\circ (C_1 + \|H_\circ\|_{0,\Omega}),
	\end{equation}
	hence $\omega\mu(z)(\tilde{H}_c-H_{\circ, c}) \in L^2(\Omega)$. Using analogous arguments, it can be shown that $- \omega \mu(z)(\tilde{H}_r-H_{\circ, r}) \in L^2(\Omega)$. Hence \eqref{eq:real_laplace} and \eqref{eq:complex_laplace} are two real Poisson problems with right-hand sides in $L^2(\Omega)$. Consider just \eqref{eq:real_laplace}: the argument for \eqref{eq:complex_laplace} follows in an identical manner. Since $\Omega$ is Lipschitz and bounded, and \eqref{eq:real_laplace} is a Poisson equation with a right-hand side in $L^2(\Omega)$, 
  it follows from  \eqref{eq:JKWm1p} that $\tilde{H}_r\in W^{1,4}_0(\Omega)$ and
	\begin{equation*}
		\|\tilde{H}_r\|_{1, 4, \Omega}\leq C_{JK}(4) \|\omega\mu(z)(\tilde{H}_c -H_{\circ, c})\|_{-1, 4, \Omega}.
	\end{equation*}
	Now, since $L^2(\Omega) \subset W^{-1, 4}(\Omega)$ and this injection is continuous (see \cite[p.~291]{Brezis-2011}), there exists a constant $C_i$ such that for every $v \in L^2(\Omega)$, we have that $\|v\|_{-1, 4, \Omega} \leq C_i \|v\|_{0,\Omega}$. Therefore, using \eqref{eq:rhs_poisson_bounded}, we have that
	\begin{equation}
		\|\tilde{H}_r\|_{1, 4, \Omega}\leq C_{JK}(4)C_i \|\omega\mu(z)(\tilde{H}_c - H_{\circ, c})\|_{0, \Omega} \eqqcolon C_r(\Omega, \omega, \mu^{\circ}, H_\circ).
	\end{equation}
	
	An identical argument shows that there exists a positive constant $C_c$ such that
	\[
	  \|\tilde{H}_c\|_{1, 4, \Omega} \leq C_c(\Omega, \omega, \mu^\circ, H_\circ) .
	\]
	Finally, using the triangle inequality, it follows that
	\begin{equation*}
		\|H\|_{1,4, \Omega,\C} \leq \|\tilde{H}_r\|_{1,4, \Omega,\C} + \|\tilde{H}_c\|_{1,4, \Omega,\C} + \|H_\circ\|_{1,4,\Omega, \C} \leq C_r + C_c + \|H_\circ\|_{0,4,\Omega, \C} \eqqcolon C_2,
	\end{equation*}
	which proves \eqref{Eq:Bound-H-W14}, and thus finishes the proof. 
\end{proof}

This last result has as an implication that any $H$ that solves \eqref{eq:H_tilde_weak_form} (and thus any possible solution of \eqref{eq:weak_H_coupled}) is regular enough so the
right-hand side of \eqref{eq:weak_u_coupled} is well-defined.  In fact, $\nabla H\in L^4(\Omega;\C)^2$, which
justifies the following result.

\begin{lemma}\label{lem:RHS_in_L2} 
	Let $H$ be the unique solution of \eqref{eq:H_tilde_weak_form} for any fixed $z \in H^1_0(\Omega)$. Then
	$|\nabla H|^2 \in L^2(\Omega)$. 
\end{lemma}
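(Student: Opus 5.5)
The plan is to read this off directly from Lemma~\ref{lem:H_bounded_H1}. For the fixed $z\in H^1_0(\Omega)$, that lemma gives $H=\tilde H+H_\circ\in W^{1,4}(\Omega;\C)$ together with the uniform bound \eqref{Eq:Bound-H-W14}. In particular $\nabla H\in L^4(\Omega;\C)^2$.

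First I would write $\nabla H=\nabla\tilde H_r+i\nabla\tilde H_c$; here $H_\circ$ is constant, so it does not contribute to the gradient. This gives
\[
|\nabla H|^2=|\nabla \tilde H_r|^2+|\nabla\tilde H_c|^2 ,
\]
a nonnegative, real-valued, measurable function.

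Next I would compute its $L^2$ norm directly:
\[
\bigl\||\nabla H|^2\bigr\|_{0,\Omega}^2=\int_\Omega |\nabla H|^4\,\mathrm{d}\boldsymbol{x}=\|\nabla H\|_{0,4,\Omega,\C}^4\le C_2^4 .
\]
Here the pointwise Euclidean modulus of the complex gradient is used consistently, and the last inequality is \eqref{Eq:Bound-H-W14}. If the $W^{1,4}$ norm is defined componentwise rather than through $|\nabla H|$, the two differ only by a fixed constant, via $(a+b)^2\le 2(a^2+b^2)$. Hence $|\nabla H|^2\in L^2(\Omega)$, with the bound $\||\nabla H|^2\|_{0,\Omega}\le C_2^2$, which is independent of $z$.

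There is no real obstacle in this argument: all the work is in the $W^{1,4}$ regularity already established in Lemma~\ref{lem:H_bounded_H1} through the Jerison--Kenig estimate \eqref{eq:JKWm1p} with $p=4\le P$. The only point needing care is the bookkeeping between the complex modulus and the real and imaginary parts. I would also record the $z$-independent bound, since it will be needed later when this right-hand side is used in \eqref{eq:weak_u_coupled}.
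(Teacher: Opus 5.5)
Your proposal is correct and follows the paper's argument: the paper gives no separate proof and simply notes that Lemma~\ref{lem:H_bounded_H1} yields $\nabla H\in L^4(\Omega;\C)^2$, whence $\||\nabla H|^2\|_{0,\Omega}=\|\nabla H\|_{0,4,\Omega,\C}^2<\infty$. Your explicit $z$-independent bound $\||\nabla H|^2\|_{0,\Omega}\le C_2^2$ (up to the harmless norm-convention constant you mention) is a useful addition, since it is what the later arguments actually rely on.
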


As a consequence of the last two results,  \eqref{eq:weak_existence_form} is a valid weak formulation for \eqref{eq:strong_solenoidal_theory}.  In the next two sections,  two different finite element approximations for this problem will be presented and analysed.

\section{A Standard Conforming Finite Element Method}\label{sec:finite_element}

This section is devoted to showing that the standard finite element method applied to \eqref{eq:weak_existence_form}
converges under appropriate conditions on the mesh and the domain.  As a consequence, and since in this proof no a priori
assumption on regularity is used, this provides an 
existence proof for the system \eqref{eq:weak_existence_form}.
Let $\{\mathcal{T}_h\}_{h>0}$  be a shape-regular family of simplicial triangulations of  $\Omega$ (more assumptions on the family will be made at the appropriate results).
Associated to each $\mathcal{T}_h$ we define the finite element spaces
\begin{align}
W_h&\coloneqq\{ v_h\in C^0(\overline{\Omega}): v_h|_K\in \mathbb{P}_1(K)\, \forall\, K\in \calT_h\}\quad\textrm{and}\quad
V_h\coloneqq W_h\cap H^1_0(\Omega)\,,\label{FEM-spaces-real}\\
W_{h,\C}&\coloneqq\{ Q_h\in C^0(\overline{\Omega};\C): Q_h|_K\in \mathbb{P}_1(K;\C)\, \forall\, K\in \calT_h\}\quad\textrm{and}\quad
V_{h,\C}\coloneqq W_{h;\C}\cap H^1_0(\Omega;\C)\,.  \label{FEM-spaces-complex}
\end{align}

By $\mathcal{I}_h$ we denote the Scott-Zhang interpolation operator onto either $V_h$ or $V_{h,\C}$.
Details on the definitions and the main approximation properties of this operator can be found in, e.g.,  \cite{Ern-Guermond-I-2021} and \cite{Brenner-Scott-2008}.

The standard Galerkin approximation of \eqref{eq:weak_existence_form} reads: find $(H_h, u_h) \in W_{h,\C} \times V_h$ such that $\tilde{H}_h = H_h - H_\circ\in V_{h, \C}$, and 
\begin{subequations}\label{eq:discrete_full_weak_form}
	\begin{align}
		(\nabla \tilde{H}_h, \nabla Q_h)_{\Omega, \C}+  (i \omega \mu(u_h)\tilde{H}_h, Q_h)_{\Omega, \C} &= (i \omega \mu(u_h)H_\circ, Q_h)_{\Omega, \C}, \label{eq:galerkin_H_coupled}\\
		(\nabla u_h, \nabla v_h)_{\Omega} &= \left(|\nabla {H}_h|^2, v_h\right)_{\Omega},\label{eq:galerkin_u_coupled} 
	\end{align}
\end{subequations}
for all $(Q_h, v_h) \in V_{h, \C} \times V_h$.

%

\subsection{A Priori Bounds}

This section is devoted to proving existence of solutions to \eqref{eq:discrete_full_weak_form}. The proof is based on a fixed-point result, and so,
as a first step we will prove uniform  \emph{a priori} bounds on $H_h$ and $u_h$.
We start showing the following uniform bound on $H_h$, whose proof is identical to that of the first part of 
Lemma~\ref{lem:H_bounded_H1}.

\begin{lemma}\label{lem:H_h_bounded_in_H1}
	Let $z_h \in V_h$ be fixed. Consider the problem: find $H_h \in W_{h,\C}$ such that $\tilde{H}_h \coloneqq  H_h - H_\circ  \in V_{h,\C}$, and
	\begin{equation}\label{Prob:Hh-for-any-zh}
		(\nabla \tilde{H}_h, \nabla Q_h)_{\Omega, \C}+  (i \omega \mu(z_h)\tilde{H}_h, Q_h)_{\Omega, \C} = (i \omega \mu(z_h)H_\circ, Q_h)_{\Omega, \C},
	\end{equation}
	for all $Q_h \in V_{h,\C}$. Then $\|H_h\|_{1,\Omega, \C} \leq C_1$, where $C_1$ is the constant from Lemma \ref{lem:H_bounded_H1} that depends only on $\Omega, \omega, \mu^\circ$, and $H_\circ$.
\end{lemma}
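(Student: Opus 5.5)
The plan is to repeat the energy argument from the first part of Lemma~\ref{lem:H_bounded_H1} in the discrete space $V_{h,\C}$. Conformity ($V_{h,\C}\subset H^1_0(\Omega;\C)$) together with the bound \eqref{eq:mu_is_bounded}, which holds for \emph{any} argument of $\mu$ and in particular for $z_h$, ensures that every constant produced is independent of $h$ and of $z_h$.

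\textbf{Well-posedness.} Since $z_h\in V_h\subset H^1_0(\Omega)$, the sesquilinear form
\[
(\tilde H,Q)\mapsto(\nabla\tilde H,\nabla Q)_{\Omega,\C}+(i\omega\mu(z_h)\tilde H,Q)_{\Omega,\C}
\]
is continuous on $V_{h,\C}$. Its real part equals $\|\nabla Q\|^2_{0,\Omega,\C}$ on the diagonal, so it is coercive. The right-hand side is bounded by $\omega\mu^\circ\|H_\circ\|_{0,\Omega,\C}\|Q_h\|_{0,\Omega,\C}$. Hence the complex Lax--Milgram lemma, applied on the finite-dimensional space $V_{h,\C}$, gives a unique $\tilde H_h$.

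\textbf{Energy bound.} Take $Q_h=\tilde H_h$ in \eqref{Prob:Hh-for-any-zh} and take real parts. The reaction term $i\omega\|\sqrt{\mu(z_h)}\tilde H_h\|^2_{0,\Omega,\C}$ is purely imaginary, so it drops out. This leaves
\[
\|\nabla\tilde H_h\|^2_{0,\Omega,\C}\le\omega\mu^\circ\|H_\circ\|_{0,\Omega,\C}\|\tilde H_h\|_{0,\Omega,\C}\le\omega\mu^\circ C_P\|H_\circ\|_{0,\Omega,\C}\|\nabla\tilde H_h\|_{0,\Omega,\C}.
\]
Dividing through (the case $\nabla\tilde H_h=0$ is trivial) and using the norm--seminorm equivalence on $H^1_0$ gives $\|\tilde H_h\|_{1,\Omega,\C}\le C\omega\mu^\circ\|H_\circ\|_{0,\Omega,\C}$. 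This is the same constant as in the continuous case, because the Poincar\'e inequality \eqref{Poincare} applies to $\tilde H_h\in H^1_0(\Omega;\C)$.

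\textbf{Conclusion.} The triangle inequality, together with $\|H_\circ\|_{1,\Omega,\C}=\|H_\circ\|_{0,\Omega,\C}$ (since $H_\circ$ is constant), yields $\|H_h\|_{1,\Omega,\C}\le C_1$ with exactly the $C_1$ of Lemma~\ref{lem:H_bounded_H1}.

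There is no real obstacle here. The only point needing care is to check that no $h$-dependent constant (such as an inverse inequality) enters the argument, and none does.
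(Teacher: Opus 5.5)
Your proposal is correct and takes the same route as the paper, which simply states that the proof is identical to the first part of Lemma~\ref{lem:H_bounded_H1}: test with $\tilde H_h$, drop the purely imaginary reaction term, then apply Poincar\'e and the triangle inequality. Your observation that conformity $V_{h,\C}\subset H^1_0(\Omega;\C)$ and the uniform bound $\mu\le\mu^\circ$ make the constant independent of $h$ and $z_h$ is exactly the point that justifies reusing $C_1$.
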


The key step in the proof of existence of solutions is the analogue of the second part in Lemma~\ref{lem:H_bounded_H1}, namely,   the need to establish  a uniform bound on  $\nabla H_h$ in $L^4(\Omega; \C)^2$. This step will require more stringent conditions on the domain $\Omega$, and
the mesh.

\begin{lemma}\label{lem:H_bounded_in_L4}
	Assume that $\Omega$ is convex,  the family of meshes $\{\mathcal{T}_h\}_{h>0}$ is quasi-uniform, and $h$ is small enough. Let $z_h \in V_h$ be fixed, and let $H_h$ be the solution of
	\eqref{Prob:Hh-for-any-zh}.
Then, there exists a constant $C_3$ depending only on $\Omega, \omega, \mu$, and $H_\circ$ such that
	\begin{equation}\label{Eq:Unif-Bound-Hh}
		\|\nabla H_h\|_{0,4,\Omega, \C} \leq C_3.
	\end{equation}
\end{lemma}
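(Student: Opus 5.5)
The strategy is to compare $H_h$ with the continuous solution $H=\tilde H+H_\circ$ of \eqref{eq:H_tilde_weak_form} for the same coefficient $z=z_h\in V_h\subset H^1_0(\Omega)$. Lemma~\ref{lem:H_bounded_H1} already gives $\|\nabla H\|_{0,4,\Omega,\C}\le C_2$ independently of $z_h$. Since $H_\circ$ is constant, $\nabla H_h=\nabla\tilde H_h$, so it suffices to bound $\|\nabla(\tilde H-\tilde H_h)\|_{0,4,\Omega,\C}$ uniformly.

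We split the error through the Scott--Zhang interpolant:
\[
\|\nabla(\tilde H-\tilde H_h)\|_{0,4,\Omega,\C}\le \|\nabla(\tilde H-\mathcal{I}_h\tilde H)\|_{0,4,\Omega,\C}+\|\nabla(\mathcal{I}_h\tilde H-\tilde H_h)\|_{0,4,\Omega,\C}.
\]
The first term is bounded by $C\|\nabla \tilde H\|_{0,4,\Omega,\C}\le C\,C_2$ by the $W^{1,4}$-stability of $\mathcal{I}_h$. For the second term, which is a discrete function, quasi-uniformity gives the inverse inequality $\|\nabla v_h\|_{0,4,\Omega}\le C h^{-1/2}\|\nabla v_h\|_{0,\Omega}$ (in 2D, $h^{2(1/4-1/2)}=h^{-1/2}$). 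Adding and subtracting $\tilde H$ once more, it remains to show
\[
\|\nabla(\tilde H-\tilde H_h)\|_{0,\Omega,\C}+\|\nabla(\tilde H-\mathcal{I}_h\tilde H)\|_{0,\Omega,\C}\le C h^{1/2},
\]
with $C$ independent of $z_h$.

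Here convexity enters. From \eqref{eq:real_laplace}--\eqref{eq:complex_laplace}, $-\Delta\tilde H$ has an $L^2$ right-hand side bounded by $\omega\mu^\circ(C_1+\|H_\circ\|_{0,\Omega,\C})$, uniformly in $z_h$. On a convex domain, $H^2$ regularity of the Dirichlet Laplacian yields $\|\tilde H\|_{2,\Omega,\C}\le C$ uniformly. Interpolation estimates then give $\|\nabla(\tilde H-\mathcal{I}_h\tilde H)\|_{0,\Omega,\C}\le Ch$. For the Galerkin error, the sesquilinear form $a(P,Q)=(\nabla P,\nabla Q)_{\Omega,\C}+(i\omega\mu(z_h)P,Q)_{\Omega,\C}$ is bounded with constant depending only on $\omega,\mu^\circ,C_P$, and coercive in real part with constant $1$ (as in the proof of Lemma~\ref{lem:H_bounded_H1}), uniformly in $z_h$. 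Galerkin orthogonality (both problems share the right-hand side) and Céa's lemma then give $\|\nabla(\tilde H-\tilde H_h)\|_{0,\Omega,\C}\le C\|\tilde H-\mathcal{I}_h\tilde H\|_{1,\Omega,\C}\le Ch$. Combining these, the second term is $\le Ch^{-1/2}\cdot h=Ch^{1/2}\le C$ for $h\le 1$, which gives \eqref{Eq:Unif-Bound-Hh} with $C_3$ depending only on $\Omega,\omega,\mu,H_\circ$; the requirement that $h$ be small enough is just $h\le 1$, or $h\le h_0$ for a fixed mesh scale.

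The main obstacle is uniformity in $z_h$. Every constant (regularity, Céa, interpolation) must depend only on the bounds $\mu_\circ\le\mu\le\mu^\circ$ and not on $z_h$, which the estimates above ensure. The one place where convexity is truly needed is the $H^2$ bound, which provides the full power $h^1$ in the energy error. On a merely Lipschitz domain one only has $H^{3/2-\delta}$ regularity, giving $h^{1/2-\delta}$, and that cannot compensate the inverse-estimate factor $h^{-1/2}$.
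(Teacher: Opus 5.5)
Your proof is correct, but it takes a different route from the paper's. The paper never compares $H_h$ with a continuous solution. It splits \eqref{Prob:Hh-for-any-zh} into real and imaginary parts and uses the $W^{1,p}$-stability of the Ritz projection on convex domains with quasi-uniform meshes (Rannacher--Scott, Diening--Rolfes--Salgado) to get a discrete inf-sup condition in the $W^{1,4}$--$W^{1,4/3}$ pairing. It then bounds the zeroth-order coupling terms by H\"older, Poincar\'e, the discrete $H^1$ bound of Lemma~\ref{lem:H_h_bounded_in_H1} and Sobolev embedding, i.e.\ it transcribes the continuous argument of Lemma~\ref{lem:H_bounded_H1} to the discrete level.

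You instead compare with the continuous solution for the same coefficient $\mu(z_h)$. You trade an $O(h)$ energy error (uniform $H^2$ regularity plus C\'ea, all constants independent of $z_h$) against the $h^{-1/2}$ loss of a global inverse inequality. Every ingredient is elementary, and you do not even need $h$ small.

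The paper's route gives a genuinely discrete stability estimate whose only mesh-dependent input is Ritz stability. It would therefore carry over to any mesh family on which that stability is known (e.g.\ mildly graded meshes), whereas your global inverse inequality ties you to quasi-uniformity.

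Conversely, your closing remark undersells your method. Since $\Omega$ is a polygon, every interior angle is below $2\pi$, and corner regularity gives $\tilde H\in H^{1+s}(\Omega;\C)$, with a bound uniform in $z_h$, for some $s>1/2$. Any $s\in(1/2,\pi/\theta_{\max})$ with $s\le 1$ works, where $\theta_{\max}$ is the largest interior angle. The $H^{3/2-\delta}$ statement you quote is the general Lipschitz-domain result, not the polygonal one. Then $h^{-1/2}h^{s}$ stays bounded, so your argument also yields the $W^{1,4}$ bound on nonconvex polygons (still on quasi-uniform meshes). The paper explicitly leaves that case open for its inf-sup approach.
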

\begin{proof}
	First, note that writing $\tilde{H}_h = \tilde{H}_{h, r} + i \tilde{H}_{h, c}$ and $H_\circ = H_{\circ, r} + i H_{\circ, c}$, \eqref{Prob:Hh-for-any-zh} can be written as 
	\begin{subequations}\label{eq:discrete_real-complex}
	\begin{align}
		(\nabla \tilde{H}_{h,r}, \nabla q_h)_\Omega - (\omega \mu(z_h)\tilde{H}_{h,c}, q_h)_\Omega &= - (\omega \mu(z_h)H_{\circ, c}, q_h)_\Omega, \label{eq:discrete_real-complex-a} \\
		(\nabla \tilde{H}_{h,c}, \nabla q_h)_\Omega + (\omega \mu(z_h)\tilde{H}_{h,r}, q_h)_\Omega &=  (\omega \mu(z_h)H_{\circ, r}, q_h)_\Omega,\label{eq:discrete_real-complex-b}
	\end{align}
\end{subequations}
	for all $q_h \in V_h$.

	Let now, for $p\in [1,\infty]$,  $R_h:W^{1,p}_0(\Omega)\to V_{h}$ be the Ritz projection  defined as
	\begin{equation}\label{def:Ritz-Proj}
	(\nabla R_h w, \nabla q_h)_{\Omega} = (\nabla w, \nabla q_h)_{\Omega}\qquad\forall\, q_h\in V_{h}\,.
	\end{equation}
	In \cite{MR4748207,Rannacher-Scott-1982} it is proven that under the assumptions that $\Omega$ is convex, the family of triangulations $\{\mathcal{T}_h\}_{h>0}$ is quasi-uniform,  and $h$ is small enough, there exists a positive constant $C_{\textrm{ritz}}$ such that for all $1 < p \leq \infty$, and all $w \in W^{1,p}_0(\Omega)$,
	\begin{equation}\label{eq:ritz_inequality}
		\|\nabla R_hw\|_{0,p,\Omega} \leq C_{\textrm{ritz}} \|\nabla w\|_{0,p,\Omega}.
	\end{equation}
	From this stability, the following discrete inf-sup condition follows: for every $p \in [P',P]$, where $P$ is the same as in \eqref{eq:JKWm1p}, there exists
	$\tilde{\alpha}>0$, independent of $h$, such that
	\begin{equation}\label{eq:ritz_inequality_2}
		\tilde{\alpha}\,\|\nabla \tilde{H}_{h,r}\|_{0,p,\Omega} \leq  \sup_{q_h \in V_h} \frac{(\nabla \tilde{H}_{h,r}, \nabla q_h)_{\Omega}}{\|\nabla q_h\|_{0, p', \Omega}}\,,
	\end{equation}
	where $p'$ is such that $1/p + 1/{p'}=1$. 
	
	As a consequence of the discrete inf-sup condition  above, using \eqref{eq:discrete_real-complex-a}, H\"{o}lder's, and  Poincar\'{e}'s inequalities, it follows that
	\begin{align*}
		\tilde{\alpha}\|\nabla \tilde{H}_{h, r}\|_{0, 4, \Omega, \C} &\leq \sup_{ q_h \in V_{h}}\frac{(\nabla \tilde{H}_{h,r}, \nabla q_h)_{\Omega}}{\|\nabla q_h\|_{0, 4/3, \Omega}} \\
		&\leq  \sup_{ q_h \in V_{h}}\frac{( \omega\mu(z_h)\tilde{H}_{h,c}, q_h)_{\Omega} - ( \omega \mu(z_h)H_{\circ,c}, q_h)_{\Omega}}{\|\nabla q_h\|_{0, 4/3, \Omega}} \\
		&\leq  \sup_{ q_h \in V_{h}}\frac{ \omega \mu^\circ C_P\big(\|\tilde{H}_{h, c}\|_{0, 4, \Omega } + \|{H}_{\circ, c}\|_{0, 4, \Omega} \big)\|\nabla q_h\|_{0, 4/3, \Omega}}{\|\nabla q_h\|_{0, 4/3, \Omega, }} \\
		&= \omega \mu^\circ C_P\big(\|\tilde{H}_{h, c}\|_{0, 4,\Omega}  + \|H_\circ\|_{0, 4, \Omega, }\big).
	\end{align*}
	Next, by the Sobolev embedding Theorem \eqref{Ineq-Embedding}, using that,  thanks to  Lemma~\ref{lem:H_h_bounded_in_H1}, $\|\tilde{H}_h\|_{1, \Omega,\C} \leq C_1$,  and since $\|\tilde{H}_{h, c}\|_{1, \Omega} \leq \|\tilde{H}_{h}\|_{1, \Omega, \C}$, it follows that
	\begin{equation}\label{eq:embedding_constant}
		\tilde{\alpha} \|\nabla \tilde{H}_{h,r}\|_{0, 4, \Omega} \leq 
		\omega \mu^\circ C_P\big( \Cemb C_1 +\|H_\circ\|_{0, 4, \Omega, \C} \big).
	\end{equation}
	It can be shown in an identical manner that 
	\begin{equation*}
		\tilde{\alpha}\|\nabla \tilde{H}_{h, c}\|_{0, 4, \Omega} \leq \omega \mu^\circ C_P\big(\Cemb C_1 +\|H_\circ\|_{0, 4, \Omega, \C}\big).
	\end{equation*}
	It follows that 
	\begin{align*}
		\|\nabla \tilde{H}_h\|_{0,4,\Omega, \C} \leq \|\nabla \tilde{H}_{h, r}\|_{0, 4, \Omega} +\|\nabla \tilde{H}_{h, c}\|_{0, 4, \Omega}\leq\frac{ 2 \omega \mu^\circ C_P\big(\Cemb C_1 +\|H_\circ\|_{0, 4, \Omega, \C}\big)}{\tilde{\alpha}} \eqqcolon C_3,
	\end{align*}
	which finishes the proof.
\end{proof}

%

Lemma \ref{lem:H_bounded_in_L4} immediately shows that $|\nabla H_h|^2$ is uniformly bounded in $L^2(\Omega)$. This allows us to prove the following uniform a priori bound for $u_h$.

\begin{lemma}\label{Lem:unif-bound-uh}
	Under the hypotheses of Lemma~\ref{lem:H_bounded_in_L4} there exists a positive constant $C_4$ independent of $h$ such that for any $H_h \in V_{h,\C}$ with $\|\nabla H_h\|_{0,4,\Omega,\C} \leq C_3$, the solution  $u_h$ to \eqref{eq:galerkin_u_coupled} satisfies
	\begin{equation}
		|u_h|_{1,\Omega} \leq C_4.
	\end{equation}
\end{lemma}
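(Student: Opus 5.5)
The plan is the standard energy estimate for \eqref{eq:galerkin_u_coupled}, with the right-hand side controlled in $L^2(\Omega)$ through the $L^4$ bound on $\nabla H_h$. We test \eqref{eq:galerkin_u_coupled} with $v_h = u_h\in V_h$, which is admissible, and obtain
\begin{equation*}
|u_h|_{1,\Omega}^2 = \left(|\nabla H_h|^2, u_h\right)_\Omega .
\end{equation*}

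Next we bound the right-hand side by the Cauchy--Schwarz inequality, $\left(|\nabla H_h|^2,u_h\right)_\Omega \le \big\| |\nabla H_h|^2\big\|_{0,\Omega}\,\|u_h\|_{0,\Omega}$. The first factor is exactly the fourth power of an $L^4$ norm:
\begin{equation*}
\big\| |\nabla H_h|^2\big\|_{0,\Omega} = \Big(\int_\Omega |\nabla H_h|^4\Big)^{1/2} = \|\nabla H_h\|_{0,4,\Omega,\C}^2 \le C_3^2 .
\end{equation*}
For the second factor, Poincar\'e's inequality \eqref{Poincare} with $p=2$ gives $\|u_h\|_{0,\Omega}\le C_P|u_h|_{1,\Omega}$.

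Combining these, we get $|u_h|_{1,\Omega}^2 \le C_P C_3^2 |u_h|_{1,\Omega}$. If $u_h=0$ there is nothing to prove. Otherwise we divide by $|u_h|_{1,\Omega}$ and obtain the claim with $C_4 \coloneqq C_P C_3^2$. This constant depends only on $\Omega$ and on $C_3$, hence not on $h$.

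The only point needing care is that the $h$-independence rests entirely on the uniform bound $C_3$, which is assumed in the statement (and is supplied by Lemma~\ref{lem:H_bounded_in_L4} under its hypotheses). A minor remark: the modulus $|\nabla H_h|^2 = |\nabla \tilde H_h|^2$ since $H_\circ$ is constant, so it does not matter whether the bound is stated for $H_h$ or for $\tilde H_h$. Existence and uniqueness of $u_h$ for the given $H_h$ follow from Lax--Milgram in $V_h$, since $|\nabla H_h|^2\in L^2(\Omega)$.
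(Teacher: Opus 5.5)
Your proposal is correct and is essentially the paper's proof. You test \eqref{eq:galerkin_u_coupled} with $u_h$, then use Cauchy--Schwarz with $\||\nabla H_h|^2\|_{0,\Omega}=\|\nabla H_h\|_{0,4,\Omega,\C}^2\le C_3^2$ and Poincar\'e's inequality, which gives the same constant $C_4=C_3^2C_P$.
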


\begin{proof}
	Taking $u_h$ as a test function in \eqref{eq:galerkin_u_coupled}, using the Cauchy-Schwarz and Poincar\'e inequalities, and \eqref{Eq:Unif-Bound-Hh}, it follows that 
	\begin{equation*}
		|u_h|_{1,\Omega}^2 = (|\nabla H_h|^2, u_h)_{\Omega}\leq \||\nabla H_h|^2\|_{0, \Omega}\|u_h\|_{0, \Omega} 
		\leq C^2_3C_P |u|_{1,\Omega}\,,
		\end{equation*}
which proves the result with $C_4 \coloneqq C_3^2C_P$.
\end{proof}

%

\subsection{Existence of Solutions and Convergence }

In this section we use the uniform a priori bounds from the last section to prove existence of discrete solutions, as well as their convergence to a solution of \eqref{eq:weak_existence_form}.

\begin{theorem}\label{Theo:Exis-Galerkin}
	Under the hypotheses of Lemma~\ref{lem:H_bounded_in_L4}, there exists a solution $(H_h, u_h)$ to \eqref{eq:discrete_full_weak_form}.
\end{theorem}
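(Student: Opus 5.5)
We will obtain a solution of \eqref{eq:discrete_full_weak_form} as a fixed point of a map on the finite-dimensional space $V_h$, using Brouwer's fixed point theorem. Define $T_h: V_h\to V_h$ as follows. Given $z_h\in V_h$, let $H_h = H_\circ+\tilde H_h$ be the unique solution of \eqref{Prob:Hh-for-any-zh}. Existence and uniqueness follow from the discrete Lax--Milgram lemma, exactly as in Lemma~\ref{lem:H_bounded_H1}, since the real part of the sesquilinear form is coercive on $V_{h,\C}$. Then let $u_h=T_h(z_h)\in V_h$ be the unique solution of
\[
(\nabla u_h,\nabla v_h)_\Omega = (|\nabla H_h|^2, v_h)_\Omega\qquad\forall\, v_h\in V_h ,
\]
which is well posed by Lax--Milgram. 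A pair $(H_h,u_h)$ solves \eqref{eq:discrete_full_weak_form} exactly when $u_h$ is a fixed point of $T_h$ and $H_h$ is the associated magnetic field.

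\emph{Step 1: invariant ball.} Lemma~\ref{lem:H_bounded_in_L4} gives $\|\nabla H_h\|_{0,4,\Omega,\C}\le C_3$ independently of $z_h$. Lemma~\ref{Lem:unif-bound-uh} then gives $|T_h(z_h)|_{1,\Omega}\le C_4$. Hence $T_h$ maps the closed convex set $B=\{z_h\in V_h: |z_h|_{1,\Omega}\le C_4\}$ into itself. Since $V_h$ is finite dimensional and $|\cdot|_{1,\Omega}$ is a norm on it, $B$ is compact and convex. (In finite dimensions the plain $H^1$ bound from Lemma~\ref{lem:H_h_bounded_in_H1} would already suffice, with an $h$-dependent radius obtained via inverse estimates; here we use the uniform bound.)

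\emph{Step 2: continuity of $T_h$ — the main step.} Let $z_h^n\to z_h$ in $V_h$. Then $z_h^n\to z_h$ uniformly, since all norms on $V_h$ are equivalent. By \eqref{Eq:mu-Lip}, $\mu(z_h^n)\to\mu(z_h)$ in $L^\infty(\Omega)$. Subtract the two discrete problems \eqref{Prob:Hh-for-any-zh} for $z_h^n$ and $z_h$, and test with $E^n=\tilde H_h^n-\tilde H_h$. Taking real parts gives
\[
\|\nabla E^n\|^2_{0,\Omega,\C}\le \omega\,\|\mu(z_h^n)-\mu(z_h)\|_{0,\infty,\Omega}\,\big(\|\tilde H_h\|_{0,\Omega,\C}+\|H_\circ\|_{0,\Omega,\C}\big)\,\|E^n\|_{0,\Omega,\C}.
\]
Using Poincar\'e and Lemma~\ref{lem:H_h_bounded_in_H1}, this shows $H_h^n\to H_h$ in $H^1$, and therefore in any norm on $W_{h,\C}$. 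In particular $\nabla H_h^n\to\nabla H_h$ in $L^4$, so $|\nabla H_h^n|^2\to|\nabla H_h|^2$ in $L^2(\Omega)$; this uses $\big||a|^2-|b|^2\big|\le |a-b|\,(|a|+|b|)$ and H\"older's inequality. Subtracting the two heat problems and testing with the difference gives
\[
|T_h z_h^n - T_h z_h|_{1,\Omega}\le C_P\,\big\||\nabla H_h^n|^2-|\nabla H_h|^2\big\|_{0,\Omega}\to 0 .
\]

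\emph{Step 3: conclusion.} Brouwer's theorem applied to $T_h:B\to B$ yields a fixed point $u_h$. The pair $(H_h,u_h)$, with $H_h$ computed from $z_h=u_h$, solves \eqref{eq:discrete_full_weak_form}. The only delicate point is Step 2, in particular controlling the variation of the coefficient $\mu(z_h)$. This is handled by the Lipschitz continuity of $\mu$ together with equivalence of norms in finite dimensions, so it presents no real obstacle.
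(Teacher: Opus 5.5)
Your proposal is correct and follows essentially the same route as the paper: Brouwer's theorem for the composite map $V_h\to V_h$, self-mapping of a ball via the uniform bound $C_4$ from Lemma~\ref{Lem:unif-bound-uh}, and continuity of each stage obtained by testing the difference equations. The one minor difference is that you get continuity from norm equivalence on $V_h$ (uniform convergence of $\mu(z_h^n)$, and $H^1\Rightarrow W^{1,4}$ convergence of discrete gradients), whereas the paper derives explicit Lipschitz estimates with $h$-independent constants; those estimates are what its subsequent remark on uniqueness via contraction relies on.
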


\begin{proof}
	The proof is based on Brouwer's fixed point theorem (see, e.g., \cite[Theorem~1.10]{Roubicek-2013}). Consider the decoupled problem: given $\hat{w}_h \in V_h$, find $(\varphi_h, w_h) \in W_{h, \C} \times V_h$ such that $\tilde{\varphi}_h \coloneqq \varphi_h - H_\circ \in V_{h, \C}$, and
	\begin{align}
		(\nabla \tilde{\varphi}_h, \nabla Q_h)_{\Omega, \C}+  (i \omega \mu(\hat{w}_h)\tilde{\varphi}_h, Q_h)_{\Omega, \C} &= (i \omega \mu(\hat{w}_h)H_\circ, Q_h)_{\Omega, \C}, \label{eq:galerkin_H_decoupled}\\
		(\nabla w_h, \nabla v_h)_{\Omega} &= \left(|\nabla \varphi_h|^2, v_h\right)_{\Omega},\label{eq:galerkin_u_decoupled} 
	\end{align}
	for all $(Q_h, v_h) \in V_{h, \C} \times V_h$.  Since both \eqref{eq:galerkin_H_decoupled} and \eqref{eq:galerkin_u_decoupled} are well-posed problems, the following mapping is well-defined:
	\begin{equation*}
		M_h\coloneqq M_{2,h} \circ M_{1,h}:V_h \rightarrow V_h: \hat{w}_h \mapsto w_h, \qquad \text{ where } \quad M_{1,h}:\hat{w}_h \mapsto \varphi_h, \quad M_{2,h} : \varphi_h \mapsto w_h.
	\end{equation*}
	Lemma \ref{Lem:unif-bound-uh} yields the following uniform bound
	\begin{equation}\label{Eq:uni-bound-Mh}
		\|\nabla M_h(\hat{w}_h)\|_{0, \Omega}\leq C_4 \qquad \forall\  \hat{w}_h \in V_h.
	\end{equation}
	
	Let $\{\hat{w}_k\}_{k\geq 0}$ be a sequence in $V_h$. Define, for every $k\in \mathbb{N}$,  $\varphi_k \coloneqq M_{1,h}(\hat{w}_k)$, which defines another sequence $\{\varphi_k\}_{k\geq 1}$, which is uniformly bounded by a constant $C_1$ in $H^1(\Omega;\C)$ (where $C_1$ is the constant from Lemma \ref{lem:H_bounded_H1}). It follows that the sequence $\{\tilde{\varphi}_k\}_{k\geq1}$, where $\tilde{\varphi}_k \coloneqq \varphi_k - H_\circ$ is also uniformly bounded in $H^1_0(\Omega;\C)$.
Similarly,  using \eqref{Eq:uni-bound-Mh}, the sequence defined by $w_k \coloneqq M_{2,h}(\varphi_k)$  is uniformly bounded  in $H^1_0(\Omega)$.
Therefore if we show that the map $M_h$ is continuous, we can apply Brouwer's Fixed Point Theorem to prove that there exists a solution to the discrete problem \eqref{eq:discrete_full_weak_form}.
Let $k,j\in\mathbb{N}$. Then for all test functions $Q_h \in V_{h,\C}$,
	\begin{align*}
		(\nabla \tilde{\varphi}_k, \nabla Q_h)_{\Omega, \C}+  (i \omega \mu(\hat{w}_k)\tilde{\varphi}_k, Q_h)_{\Omega, \C} &= (i \omega \mu(\hat{w}_k)H_\circ, Q_h)_{\Omega, \C}, \\
		(\nabla \tilde{\varphi}_j, \nabla Q_h)_{\Omega, \C}+  (i \omega \mu(\hat{w}_j)\tilde{\varphi}_j, Q_h)_{\Omega, \C} &= (i \omega \mu(\hat{w}_j)H_\circ, Q_h)_{\Omega, \C},
	\end{align*}
	which implies that
	\begin{align*}
		(\nabla (\tilde{\varphi}_k - \tilde{\varphi}_j), \nabla Q_h)_{\Omega, \C} &=  (i \omega \mu(\hat{w}_k)H_\circ - i\omega \mu(\hat{w}_j)H_\circ, Q_h)_{\Omega, \C} \\
		&\qquad - (i \omega \mu(\hat{w}_k)\tilde{\varphi}_k - i \omega\mu(\hat{w}_j)\tilde{\varphi}_j, Q_h)_{\Omega, \C} .
	\end{align*}
	Taking $Q_h=\tilde{\varphi}_k - \tilde{\varphi}_j$ as a test function in the equality above we arrive at
	\begin{align*}
		\|\nabla (\tilde{\varphi}_k - \tilde{\varphi}_j)\|^2_{0, \Omega, \C} &= (i \omega H_\circ (\mu(\hat{w}_k) - \mu(\hat{w}_j)), \tilde{\varphi}_k - \tilde{\varphi}_j)_{\Omega, \C} \\ 
		&\quad -  (i \omega \mu(\hat{w}_k)\tilde{\varphi}_k - i \omega\mu(\hat{w}_j)\tilde{\varphi}_j, \tilde{\varphi}_k - \tilde{\varphi}_j )_{\Omega, \C}.
	\end{align*}
	Using the Cauchy-Schwarz  and Poincar\'e inequalities, and the fact that $\mu(\cdot)$ is Lipschitz, we obtain that
	\begin{align*}
		\left|(i \omega H_\circ (\mu(\hat{w}_k) - \mu(\hat{w}_j)), \tilde{\varphi}_k - \tilde{\varphi}_j)_{\Omega, \C} \right|&\leq \omega|H_\circ| \|\mu(\hat{w}_k) - \mu(\hat{w}_j)\|_{0,\Omega, \C} \|\tilde{\varphi}_k - \tilde{\varphi}_j\|_{0, \Omega, \C} \\
		& \leq \omega|H_\circ| \Clip C_P\|\hat{w}_k - \hat{w}_j\|_{0, \Omega} \|\nabla (\tilde{\varphi}_k - \tilde{\varphi}_j)\|_{0, \Omega, \C}.
	\end{align*}
	Next, a straightforward computation leads to
	\begin{align*}
		(i \omega \mu(\hat{w}_k)\tilde{\varphi}_k &- i \omega\mu(\hat{w}_j)\tilde{\varphi}_j, \tilde{\varphi}_k - \tilde{\varphi}_j )_{\Omega, \C}\\ &= (i \omega \mu(\hat{w}_k)\tilde{\varphi}_k - i \omega\mu(\hat{w}_j)\tilde{\varphi}_k + i \omega\mu(\hat{w}_j)\tilde{\varphi}_k - i \omega\mu(\hat{w}_j)\tilde{\varphi}_j, \tilde{\varphi}_k - \tilde{\varphi}_j )_{\Omega, \C} \\
		& = (i \omega (\mu(\hat{w}_k) - \mu(\hat{w}_j))\tilde{\varphi}_k, \tilde{\varphi}_k - \tilde{\varphi}_j)_{\Omega, \C} + (i \omega \mu(\hat{w}_j)(\tilde{\varphi}_k - \tilde{\varphi}_j), \tilde{\varphi}_k - \tilde{\varphi}_j )_{\Omega, \C}.
	\end{align*}
  Observe that the last term on the right hand side is purely imaginary.
	Gathering the above computations,  using the fact that $\|\nabla (\tilde{\varphi}_k - \tilde{\varphi}_j)\|_{0, \Omega,\C}$ is real,  Young's inequality, the fact that $\|\tilde{\varphi}_k\|_{1,\Omega,\C}$ is uniformly bounded,  and the embedding constant $\Cemb$ from \eqref{Ineq-Embedding}, it follows that
	\begin{align*}
		\|\nabla (\tilde{\varphi}_k &- \tilde{\varphi}_j)\|_{0, \Omega,\C}^2 = \real \left( (i \omega H_\circ (\mu(\hat{w}_k) - \mu(\hat{w}_j)), \tilde{\varphi}_k - \tilde{\varphi}_j)_{\Omega, \C} + (i \omega (\mu(\hat{w}_k) - \mu(\hat{w}_j))\tilde{\varphi}_k,\tilde{\varphi}_k - \tilde{\varphi}_j)_{\Omega, \C} \right) \\
		& \leq \omega|H_\circ| \Clip C_P\|\hat{w}_k - \hat{w}_j\|_{0, \Omega} \|\nabla (\tilde{\varphi}_k - \tilde{\varphi}_j)\|_{0, \Omega, \C} + \omega \Clip  C_PC_3\|\hat{w}_k - \hat{w}_j\|_{0, \Omega}\|\tilde{\varphi}_k - \tilde{\varphi}_j\|_{0, 4, \Omega, \C} \\
		& \leq \omega \Clip  C_P(|H_\circ| + C_3\Cemb) \|\hat{w}_k - \hat{w}_j\|_{0, \Omega} \|\nabla (\tilde{\varphi}_k - \tilde{\varphi}_j)\|_{0, \Omega, \C},
		\end{align*}
which implies that
		\begin{equation*}
		\|\nabla (\tilde{\varphi}_k - \tilde{\varphi}_j)\|_{0, \Omega,\C} \leq  \omega \Clip  C_P(|H_\circ| + C_3\Cemb) \|\hat{w}_k - \hat{w}_j\|_{0, \Omega} .
	\end{equation*}
	This proves that $M_{1,h}$ is continuous since $\varphi_k - \varphi_k  = (\tilde{\varphi}_k + H_\circ) - (\tilde{\varphi}_j + H_\circ) = \tilde{\varphi}_k - \tilde{\varphi}_j$.

	To prove that $M_{2,h}$ is continuous, let $w_j \coloneqq M_{2,h}(\varphi_j)$ and $w_k\coloneqq M_{2,h}(\varphi_k)$. Then, for all $v_h \in V_h$,
	\begin{equation*}
		(\nabla (w_k - w_j),\nabla v_h)_\Omega = (|\nabla \varphi_k|^2 - |\nabla \varphi_j|^2, v_h)_\Omega.
	\end{equation*}
	Taking $v_h=w_k - w_j$ as a test function in the above equality,  using Young's inequality, Sobolev's embedding Theorem, and Lemma~\ref{lem:H_bounded_in_L4} we arrive at
	\begin{align*}
		\|\nabla (w_k - w_j)\|^2_{0,\Omega} &= (|\nabla \varphi_k|^2 - |\nabla \varphi_j|^2, w_k - w_j)_\Omega \\
		&= (\nabla \varphi_k\cdot \overline{\nabla \varphi_k} - \nabla \varphi_k\cdot \overline{\nabla \varphi_j} + \nabla \varphi_k\cdot \overline{\nabla \varphi_j} - \nabla \varphi_j\cdot \overline{\nabla \varphi_j}, w_k - w_j)_\Omega \\
		&= (\nabla \varphi_k\cdot (\overline{\nabla \varphi_k} -  \overline{\nabla \varphi_j}),  w_k - w_j)_\Omega  +( (\nabla \varphi_k- \nabla \varphi_j)\cdot \overline{\nabla \varphi_j}, w_k - w_j)_\Omega \\
		&\leq \|\nabla \varphi_k\|_{0,4,\Omega,\C} \|\overline{\nabla \varphi_k} -  \overline{\nabla \varphi_j}\|_{0, \Omega,\C}\|w_k - w_j\|_{0,4,\Omega, \C} \\ & \qquad + \|\nabla \varphi_k- \nabla \varphi_j\|_{0,\Omega,\C}\|\overline{\nabla \varphi_j}\|_{0, 4, \Omega, \C}\|w_k - w_j\|_{0,4,\Omega, \C} \\
		& \leq 2C_3\Cemb\|\varphi_k - \varphi_j\|_{1,\Omega, \C}\|\nabla(w_k - w_j)\|_{0,\Omega}\,,
		\end{align*}
		which proves that  $M_{2,h}$ is continuous as well.  The continuity of $M_h$ and \eqref{Eq:uni-bound-Mh} allow us to use 
		Brouwer's fixed point Theorem to prove  that the mapping $M_h$ has a fixed point   $u_h$.  Defining $H_h=M_{1,h}(u_h)$, then  $(H_h, u_h)$.
	 solves \eqref{eq:discrete_full_weak_form}.
\end{proof}

\begin{Remark}
It is worth mentioning that the proof of the previous result actually shows that the mapping $M_h$ is Lipschitz. As a consequence, if the constants in the previous results are tracked carefully,  then it would be possible to prove uniqueness of solutions via contraction if the coefficients of the equation are small
enough, and/or $\mu(\cdot)$ is close enough to be a constant function.
\end{Remark}

The next result shows that, up to a subsequence, the discrete solution converges to a solution of \eqref{eq:weak_existence_form}, thus
proving existence of solutions of \eqref{eq:weak_existence_form} in the case when $\Omega$ is convex.

\begin{theorem}\label{Theo:Conv-Galerkin}
	Assume the hypotheses of Lemma~\ref{lem:H_bounded_in_L4}, and let $\{(H_h, u_h)\}_{h>0}$ be a family of solutions of \eqref{eq:discrete_full_weak_form}. Then, up to a subsequence, $H_h \to H$ strongly in $H^1(\Omega ;\C)$ and $u_h \to u $ strongly in $H^1_0(\Omega)$, where $(H,u)$ solves \eqref{eq:weak_existence_form}.
\end{theorem}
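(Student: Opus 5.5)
The argument is a compactness step followed by passing to the limit in \eqref{eq:discrete_full_weak_form}, and then an upgrade from weak to strong convergence. By Lemma~\ref{lem:H_h_bounded_in_H1}, Lemma~\ref{lem:H_bounded_in_L4} and Lemma~\ref{Lem:unif-bound-uh}, the families $\{\tilde H_h\}$ and $\{u_h\}$ are bounded in $H^1_0(\Omega;\C)$ and $H^1_0(\Omega)$, and $\{\nabla H_h\}$ is bounded in $L^4(\Omega;\C)^2$. We extract a subsequence with
\[
\tilde H_h\rightharpoonup \tilde H \text{ in } H^1_0(\Omega;\C),\qquad u_h\rightharpoonup u \text{ in } H^1_0(\Omega).
\]
By Rellich--Kondrachov these convergences are strong in $L^q$ for every $q<\infty$. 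We also have $\nabla H_h\rightharpoonup\nabla H$ weakly in $L^4$, with $H=\tilde H+H_\circ$. Since $\mu$ is Lipschitz and bounded, $\mu(u_h)\to\mu(u)$ in $L^2$, and along a further subsequence also a.e.

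\textbf{Limit in the magnetic equation.} Fix $Q\in H^1_0(\Omega;\C)$ and test \eqref{eq:galerkin_H_coupled} with $Q_h=\mathcal I_h Q$, which converges to $Q$ strongly in $H^1$. The gradient term converges by weak--strong pairing. For the zeroth order term we write
\[
\mu(u_h)\tilde H_h\bar Q_h-\mu(u)\tilde H\bar Q=(\mu(u_h)-\mu(u))\tilde H_h\bar Q_h+\mu(u)(\tilde H_h\bar Q_h-\tilde H\bar Q).
\]
The first piece is controlled by $\|\mu(u_h)-\mu(u)\|_{0,\Omega}\|\tilde H_h\|_{0,4,\Omega}\|Q_h\|_{0,4,\Omega}\to0$. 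The second tends to zero by strong $L^2$ convergence of both factors and $\mu\le\mu^\circ$. The right-hand side is handled the same way, so \eqref{eq:weak_H_coupled} holds.

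\textbf{Strong convergence of $H_h$.} We compare with the Scott--Zhang interpolant $\mathcal I_h\tilde H$. Let $e_h=\tilde H_h-\mathcal I_h\tilde H\in V_{h,\C}$ and test \eqref{eq:galerkin_H_coupled} with $e_h$, taking real parts as in Lemma~\ref{lem:H_bounded_H1} so that the purely imaginary mass term disappears. This gives
\[
\|\nabla e_h\|^2_{0,\Omega,\C}=\real\Big[(i\omega\mu(u_h)(H_\circ-\mathcal I_h\tilde H),e_h)_{\Omega,\C}-(\nabla\mathcal I_h\tilde H,\nabla e_h)_{\Omega,\C}\Big].
\]
The right-hand side converges to
\[
\real\Big[(i\omega\mu(u)(H_\circ-\tilde H),\tilde H-\tilde H)_{\Omega,\C}-(\nabla\tilde H,0)\Big]=0,
\]
because $e_h\rightharpoonup0$ in $H^1$ and $e_h\to0$ strongly in $L^2$. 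Hence $\nabla e_h\to0$, and therefore $H_h\to H$ strongly in $H^1(\Omega;\C)$.

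\textbf{Limit in the heat equation (main obstacle).} Here we must show $|\nabla H_h|^2\to|\nabla H|^2$ in a sense strong enough to test against $v_h=\mathcal I_h v\to v$ in $H^1$, hence in $L^q$. Strong $L^2$ convergence of $\nabla H_h$ together with the uniform $L^4$ bound gives
\[
\||\nabla H_h|^2-|\nabla H|^2\|_{0,1,\Omega}\le\|\nabla(H_h-H)\|_{0,\Omega,\C}\big(\|\nabla H_h\|_{0,\Omega,\C}+\|\nabla H\|_{0,\Omega,\C}\big)\to0.
\]
Interpolating between $L^1$ and the uniform $L^2$ bound, we get convergence in $L^r$ for every $r<2$. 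Pairing with $v_h\to v$ in $L^{r'}$, which holds for any finite $r'$ by Sobolev embedding in 2D, yields \eqref{eq:weak_u_coupled}. Lemma~\ref{lem:RHS_in_L2} guarantees that the limit right-hand side is in $L^2$.

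\textbf{Strong convergence of $u_h$.} Let $f_h=|\nabla H_h|^2$ and $f=|\nabla H|^2$, and let $R_h u$ denote the Galerkin projection of $u$. Then $u_h-R_hu$ satisfies
\[
(\nabla(u_h-R_hu),\nabla v_h)_\Omega=(f_h-f,v_h)_\Omega\qquad\forall\,v_h\in V_h.
\]
Testing with $v_h=u_h-R_hu$ and using the $L^r$ convergence with $r<2$ together with the embedding $H^1_0\hookrightarrow L^{r'}$ gives $\|\nabla(u_h-R_hu)\|_{0,\Omega}\to0$. Since $R_hu\to u$ in $H^1$ by density, $u_h\to u$ strongly in $H^1_0(\Omega)$. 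This completes the argument, and in particular shows that a solution $(H,u)$ of \eqref{eq:weak_existence_form} exists.
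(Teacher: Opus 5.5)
Your proof is correct. Its skeleton (compactness from Lemmas~\ref{lem:H_h_bounded_in_H1}, \ref{lem:H_bounded_in_L4} and \ref{Lem:unif-bound-uh}, identification of both limit equations, then an upgrade to strong convergence) is the same as the paper's. The two strong-convergence steps, however, are argued differently.

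\textbf{Strong convergence of $H_h$.} The paper introduces an auxiliary continuous solution $\tilde{\mathcal{H}}_h\in H^1_0(\Omega;\C)$ with coefficient $\mu(u_h)$. It shows $\tilde{\mathcal{H}}_h\to\tilde H$ using the limit equation, the Lipschitz bound \eqref{Eq:mu-Lip} and $u_h\to u$ in $L^4(\Omega)$. It then runs a C\'ea-type estimate between $\tilde H_h$ and $\tilde{\mathcal{H}}_h$, relying on Scott--Zhang stability. You instead test the discrete equation directly with $e_h=\tilde H_h-\mathcal{I}_h\tilde H$ and conclude from $e_h\rightharpoonup 0$ in $H^1$ together with $e_h\to 0$ in $L^2$. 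Your argument is shorter and needs only $\mu\le\mu^\circ$ and compactness; it uses neither the Lipschitz constant nor the limit equation. The paper's splitting into a coefficient-perturbation error and a best-approximation error is, by contrast, closer to a quantitative estimate.

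\textbf{Strong convergence of $u_h$.} The paper compares with $\mathcal{I}_h(u)$ and applies H\"older with exponents $(2,4,4)$ to $(|\nabla H|-|\nabla H_h|)(|\nabla H|+|\nabla H_h|)$. This gives a bound linear in $\|\nabla(H-H_h)\|_{0,\Omega,\C}$. Your comparison with the Ritz projection $R_hu$ is equally valid, and it uses only the $H^1_0$ best-approximation property of $R_h$, not \eqref{eq:ritz_inequality}. However, routing through $L^1$ convergence and interpolation to $L^r$, $r<2$, only yields a fractional power of that error. You could recover the linear bound by applying the same H\"older splitting as the paper.

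\textbf{Identification of the heat equation.} The paper tests with $v\in\mathcal{D}(\Omega)$ and pairs $L^1$ convergence with $L^\infty$ convergence of $\mathcal{I}_h v$. Your $L^r$--$L^{r'}$ pairing lets you take any $v\in H^1_0(\Omega)$ directly.
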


\begin{proof}
	Throughout the proof, every time we mention convergence, this will
	be understood as ``convergence up to a subsequence''.  Since the family $\{(H_h, u_h)\}_{h>0}$  of discrete solutions  is uniformly bounded in $H^1(\Omega;\C)\times H^1_0(\Omega)$,
	with $\{\nabla H_h\}_{h>0}$ also uniformly bounded in $L^4(\Omega;\C)^2$, up to a subsequence, we have that $H_h \rightharpoonup H$ weakly in $H^1(\Omega; \C)$,  $ H_h \rightharpoonup  H$ weakly in $W^{1,4}(\Omega;\C)$,  and $u_h \rightharpoonup u$ weakly in $H^1_0(\Omega)$. In addition,  thanks to the Rellich-Kondrachov Theorem, $H_h\to H$ strongly  in $L^p(\Omega;\C)$  and $u_h\to u$ strongly in $L^p(\Omega)$
	for $p< \infty$.
\vspace{.15cm}	
	
\noindent\underline{Identifying the limit in \eqref{eq:galerkin_H_coupled} :}
	Let $Q \in \mathcal{D}(\Omega;\C)$ and let $Q_h \coloneqq \mathcal{I}_h(Q) \in V_{h,\C}$. Since $Q_h \rightarrow Q$ strongly in $W^{1, \infty}_{0}(\Omega;\C)$ as $h \rightarrow 0$, and recalling that $\tilde{H}_h = H_h - H_\circ$ and $\tilde{H} = H - H_\circ$,
	it follows that
	\begin{equation*}
		(\nabla \tilde{H}_h, \nabla Q_h)_{\Omega,\C} = (\nabla \tilde{H}_h, \nabla (Q_h - Q) )_{\Omega,\C} + (\nabla \tilde{H}_h, \nabla  Q )_{\Omega,\C}\rightarrow (\nabla \tilde{H}, \nabla  Q )_{\Omega,\C}, \quad\text{ as } h \rightarrow 0.
	\end{equation*}
Next,  using that $\mu(\cdot)$ is Lipschitz and  $u_h \rightarrow u$ strongly in $L^p(\Omega)$ for all $p<\infty$ we
get that $\mu(u_h) \rightarrow \mu(u)$ a.e. in $\Omega$.  This last fact, together with  $H_h\to H$ strongly in $L^2(\Omega;\C)$, leads to
\[
  (i \omega \mu(u_h)\tilde{H}_h, Q_h)_{\Omega, \C} = i \omega((\mu(u_h)-\mu(u))\tilde{H}_h, Q_h)_{\Omega,\C}+
		i \omega(\mu(u)\tilde{H}_h,Q_h)_{\Omega,\C}.
\]
Since
\[
  \left| ((\mu(u_h)-\mu(u))\tilde{H}_h, Q_h)_{\Omega,\C} \right| \leq \Clip  \| u_h - u \|_{0,4,\Omega} \| \tilde{H}_h \|_{0,\Omega,\C} \| Q_h \|_{0,4,\Omega,\C} \to 0,
\]
and
	\begin{equation*}
		(i \omega \mu(u_h)H_\circ, Q_h)_{\Omega, \C} \rightarrow (i \omega \mu(u)H_\circ,Q)_{\Omega, \C}.
	\end{equation*}
we may conclude that
\[
  (i \omega \mu(u_h)\tilde{H}_h, Q_h)_{\Omega, \C} \to (i \omega \mu(u)H , Q)_{\Omega, \C}.
\]

	This shows that $(H, u)$ satisfies \eqref{eq:weak_H_coupled}.
	
\noindent\underline{Strong convergence of $H_h$ in $H^1(\Omega;\C)$:}	
	Next, to prove the strong convergence of $H_h$ to $H$, we consider, for each $h$ in the converging subsequence,  $\mathcal{H}_h\coloneqq\mathcal{\tilde{H}}_h+H_\circ$ where $\mathcal{\tilde{H}}_h\in H^1_0(\Omega;\C)$ satisfies
	\begin{equation}\label{Eq:interm-PDe-H}
	(\nabla \mathcal{\tilde{H}}_h,  \nabla Q)_{\Omega,\C} + i\omega (\mu(u_h)\mathcal{\tilde{H}}_h,Q)_{\Omega,\C}= 
	i\omega (\mu(u_h) H_\circ, Q)_{\Omega,\C}
	\qquad\forall\, Q\in  H^1_0(\Omega;\C)\,.
	\end{equation}
	We note that $\mathcal{H}_h$ is not a finite element function,  we just use the subscript $h$ to stress that it varies with $h$ via $u_h$. 
	
	Subtracting \eqref{Eq:interm-PDe-H} from \eqref{eq:weak_H_coupled} and rearranging terms we get 
	\begin{equation}\label{Error-eq-H}
	\big(\nabla (\tilde{H}-  \mathcal{\tilde{H}}_h),  \nabla Q\big)_{\Omega,\C}
	+ i\omega  \big(\mu(u_h)(\tilde{H}-  \mathcal{\tilde{H}}_h),Q \big)_{\Omega,\C}=
	i\omega  \big((\mu(u) -\mu(u_h))(H_\circ - \tilde{H}), Q \big)_{\Omega,\C}
	\,,
	\end{equation}
	for all $Q\in  H^1_0(\Omega;\C)$.
	Taking $Q=\tilde{H}-  \mathcal{\tilde{H}}_h\in H^1_0(\Omega;\C)$ as test function in the equation above  we get to
	\[
	  	\| \nabla (\tilde{H}-  \mathcal{\tilde{H}}_h)\|_{0,\Omega}^2
	+ i\omega  \big(\mu(u_h)(\tilde{H}-  \mathcal{\tilde{H}}_h), \tilde{H}-  \mathcal{\tilde{H}}_h \big)_{\Omega,\C}=
	i\omega  \big((\mu(u) -\mu(u_h))(H_\circ - \tilde{H}), \tilde{H}-  \mathcal{\tilde{H}}_h \big)_{\Omega,\C}
	\,.
	\]
	Upon noticing that the last term on the left hand side is purely imaginary, we take the real part of this identity and use
	Young's inequality,  followed by the Lipschitz continuity
	of $\mu(\cdot)$, Poincar\'e inequality, and Sobolev embedding to arrive at
  \begin{equation} \label{intermediate-H-conver}
    \begin{aligned}
      \|\nabla (\tilde{H}-  \mathcal{\tilde{H}}_h)\|_{0,\Omega,\C}^2 &\leq 
      \omega \Clip  C_P \| u - u_h \|_{0,4,\Omega} \| \tilde{H} - H_\circ \|_{0,4,\Omega,\C} \| \nabla( \tilde{H} - \mathcal{H}_h ) \|_{0,\Omega,\C}
      \\
      &\leq \omega \Clip C_P \Cemb \| u - u_h \|_{0,4,\Omega} \| \tilde{H} - H_\circ \|_{1,\Omega,\C} \| \nabla( \tilde{H} - \mathcal{H}_h ) \|_{0,\Omega,\C},
    \end{aligned}
  \end{equation}
	from which the strong convergence of $ \mathcal{\tilde{H}}_h$ to $\tilde{H}$ in $H^1_0(\Omega;\C)$ follows from the 
	strong convergence of $u_h$ to $u$ in $L^p(\Omega)$ for all $p<\infty$.
	
	Next, taking $Q=\tilde{H}_h-  \mathcal{I}_h(\mathcal{\tilde{H}}_h)\in V_{h,\C}$ as test function in \eqref{Eq:interm-PDe-H} and \eqref{eq:galerkin_H_coupled}, and subtracting the resulting identities, we get to 
	\begin{equation*}
	(\nabla (\tilde{H}_h-\mathcal{\tilde{H}}_h), \nabla (\tilde{H}_h- \mathcal{I}_h(\mathcal{\tilde{H}}_h)))_{\Omega,\C}
	+ i\omega (\mu(u_h)(\tilde{H}_h-\mathcal{\tilde{H}}_h), \tilde{H}_h- \mathcal{I}_h(\mathcal{\tilde{H}}_h))_{\Omega,\C} =
	0\,,
	\end{equation*}
	which, after adding and subtracting $\mathcal{\tilde{H}}_h$ and rearranging terms becomes
	\begin{align*}
	&\|\nabla (\tilde{H}_h-\mathcal{\tilde{H}}_h)\|_{0,\Omega,\C}^2 + i\omega (\mu(u_h)(\tilde{H}_h-\mathcal{\tilde{H}}_h), \tilde{H}_h- \mathcal{\tilde{H}}_h)_{\Omega,\C} \\
	=&\,- (\nabla (\tilde{H}_h-\mathcal{\tilde{H}}_h), \nabla (\mathcal{\tilde{H}}_h- \mathcal{I}_h(\mathcal{\tilde{H}}_h)))_{\Omega,\C}
	+ i\omega( (\mu(u_h)(\tilde{H}_h-\mathcal{\tilde{H}}_h), \mathcal{\tilde{H}}_h- \mathcal{I}_h(\mathcal{\tilde{H}}_h))_{\Omega,\C}\,.
	\end{align*}
	We note, once again, that the last term on the left hand side is purely imaginary.
	Taking the real part of the equality above, and using on its right-hand side Young's inequality, the fact that $\mu(\cdot)X$ is bounded
	uniformly above by $\mu^\circ$,  and the Poincar\'e inequality we arrive at
	\begin{equation}\label{intermediate-H-conver-2}
	\|\nabla (\tilde{H}_h-\mathcal{\tilde{H}}_h)\|_{0,\Omega,\C}^2\le  (1+\omega\mu^\circ C_P^2)\, 
	\|\nabla (\tilde{H}_h-\mathcal{\tilde{H}}_h)\|_{0,\Omega,\C}\| \nabla (\mathcal{\tilde{H}}_h- \mathcal{I}_h(\mathcal{\tilde{H}}_h))\|_{0,\Omega,\C}\,.
	\end{equation}
	The strong convergence of $\tilde{H}_h$ to $\mathcal{\tilde{H}}_h$ follows from the fact that 
	$\| \nabla (\mathcal{\tilde{H}}_h- \mathcal{I}_h(\mathcal{\tilde{H}}_h))\|_{0,\Omega}\to 0$ as $h\to 0$, which can be easily obtained by invoking the stability of the Scott-Zhang interpolant $\mathcal{I}_h$, and the fact that $\mathcal{\tilde{H}}_h$ converges strongly to $\tilde{H}$ in $H^1_0(\Omega;\C)$.

	Hence, the strong convergence of $\tilde{H}_h$ to $\tilde{H}$ in $ H^1_0(\Omega;\C)$ follows collecting \eqref{intermediate-H-conver}
	and \eqref{intermediate-H-conver-2}, and using the triangle inequality.

\vspace{.15cm}
\noindent\underline{Identifying the limit in \eqref{eq:galerkin_u_coupled} :}	
Now, we move onto showing that $(H,u)$ solves \eqref{eq:weak_u_coupled}.  Let $v \in \mathcal{D}(\Omega)$, and let $v_h \coloneqq \mathcal{I}_h(v)$,  so that $v_h \rightarrow v$ strongly in $H^{1}_0(\Omega)$.  First,  since $u_h$ converges weakly to $u$, and is  uniformly bounded in $H^1_0(\Omega)$, we easily see that
	\begin{equation*}
		(\nabla u_h, \nabla v_h)_\Omega = (\nabla u_h, \nabla v)_\Omega+ (\nabla u_h, \nabla (v_h-v))_\Omega \rightarrow (\nabla u, \nabla v)_\Omega \quad\text{ as } h \rightarrow 0.
	\end{equation*}
	In addition, since $H_h\to H$ strongly in $H^1(\Omega;\C)$, then $|\nabla H_h|^2\to |\nabla H|^2$ strongly
in $L^1(\Omega)$,  we have that
\begin{align*}
\big|(|\nabla H_h|^2,v_h)_\Omega - ( |\nabla H|^2,v)_\Omega\big|&=\,\big |(|\nabla H_h|^2-|\nabla H|^2,v_h)_\Omega + ( |\nabla H|^2,v_h-v)_\Omega\big|\\
&\le\,  \||\nabla H_h|^2-|\nabla H|^2\|_{0,1,\Omega}\|v_h\|_{0,\infty,\Omega} +
\||\nabla H|^2\|_{0,1,\Omega}\|v_h-v\|_{0,\infty,\Omega}\\
&\rightarrow \, 0\,,
\end{align*}
after using that $v_h\to v$ in $L^\infty(\Omega)$ as $h\to 0$. 

As a conclusion, $(H,u)$ also satisfies \eqref{eq:galerkin_u_coupled}. 
\vspace{.15cm}
	
	\noindent\underline{Strong convergence of $u_h$ in $H^1_0(\Omega)$:}	
  We start by setting $\mathcal{I}_h(u)- u_h$ as a test function in \eqref{eq:weak_u_coupled} and \eqref{eq:galerkin_u_coupled}, and subtracting the results to get
	\begin{equation*}
	(\nabla (u-u_h), \nabla (\mathcal{I}_h(u)-u_h))_\Omega = (|\nabla H|^2-|\nabla H_h|^2, \mathcal{I}_h(u)-u_h)_\Omega \,,
  \end{equation*}
  which, after rearranging, yields
\begin{align*}
	\|\nabla (u-u_h)\|^2_{0,\Omega} &= 
    -(\nabla (u-u_h), \nabla(\mathcal{I}_h(u)-u))_\Omega+ (|\nabla H|^2-|\nabla H_h|^2, \mathcal{I}_h(u)-u_h)_\Omega\\
  &\leq \|\nabla (u-u_h)\|_{0,\Omega}\|\nabla(\mathcal{I}_h(u)-u)\|_{0,\Omega} +
  \left( (|\nabla H| - |\nabla H_h|)(|\nabla H| + |\nabla H_h|), \mathcal{I}_h(u)-u_h \right)_\Omega 
  \\
  &\leq
	\frac{1}{4}\|\nabla (u-u_h)\|_{0,\Omega}^2 + \|\nabla(\mathcal{I}_h(u)-u)\|_{0,\Omega}^2
	\\
	&+
  \| \nabla (H - H_h) \|_{0,\Omega,\C} \| |\nabla H|+|\nabla H_h| \|_{0,4,\Omega} \| \mathcal{I}_h(u)-u_h \|_{0,4,\Omega}.
\end{align*}
  Denote
  \[
    \mathfrak{R} \coloneqq \| \nabla (H - H_h) \|_{0,\Omega,\C} \| |\nabla H|+|\nabla H_h| \|_{0,4,\Omega} \| \mathcal{I}_h(u)-u_h \|_{0,4,\Omega}.
  \]
  Next we recall that $H \in W^{1,4}(\Omega;\C)$, and that the sequence $\{H_h\}_{h>0}$ converges weakly in $W^{1,4}(\Omega;\C)$, and so it is uniformly bounded. With this knowledge at hand, the last term on the right hand side of the previous estimate can be bounded, using the Sobolev embedding \eqref{Ineq-Embedding}, by
  \begin{align*}
    \mathfrak{R}
    &\leq C \| \nabla (H - H_h) \|_{0,\Omega,\C} \left( \| \nabla ( \mathcal{I}_h(u) - u ) \|_{0,\Omega} + \| \nabla (u - u_h) \|_{0,\Omega} \right)
    \\
    &\leq 
    C_\alpha \| \nabla (H - H_h) \|_{0,\Omega,\C} \| \nabla ( \mathcal{I}_h(u) - u ) \|_{0,\Omega}
    + C_\beta \| \nabla (H - H_h) \|_{0,\Omega,\C}^2 + \frac14 \| \nabla (u - u_h) \|_{0,\Omega}^2
    \\
    &\leq C_\gamma \left( \| \nabla (H - H_h) \|_{0,\Omega,\C}^2 + \| \nabla(\mathcal{I}_h(u) - u ) \|_{0,\Omega}^2 \right)
    + \frac14 \| \nabla (u - u_h) \|_{0,\Omega}^2,
  \end{align*}
  for some constants $C_\alpha, C_\beta, C_\gamma >0$ that are independent of $h$. In conclusion, for another constant $C_\upsilon$, that is also independent of $h$, we have
  \[
    \frac12 \|\nabla (u-u_h)\|^2_{0,\Omega} \leq C_\upsilon \left(
      \| \nabla ( \mathcal{I}_h(u) - u ) \|_{0,\Omega}^2 + \| \nabla (H - H_h) \|_{0,\Omega,\C}^2
    \right) \, .
  \]
  The convergence of $u_h$ to $u$ in $H^1_0(\Omega)$ now follows from the convergence $H_h \to H$ in $H^1_0(\Omega;\C)$ and $\mathcal{I}_h(u) \to u$ in $H^1_0(\Omega)$.
This finishes the proof.
\end{proof}

Throughout this section there have been three main assumptions, namely, that $\Omega$ is convex,  the family of triangulations is quasi-uniform, and that $h$ is \emph{small enough}.  
Under these conditions we have proven convergence of the finite element solution to a solution of
the continuous problem, thus also proving existence of weak solutions.  The question of whether these hypotheses can be removed in
the proof of the discrete inf-sup condition \eqref{eq:ritz_inequality_2} remains, up to our best knowledge, mostly open. In this regard, for instance, we mention \cite{MR2869035}, where stability of the Ritz projection over ``mildly'' graded meshes is obtained. To our knowledge, however, no reference addresses the case of a non-convex domain.  In the next
section we will present a new finite element method for \eqref{eq:weak_existence_form} for which we will be able to prove
the same results as those proven in this section, but  with less stringent
assumptions on the domain and the mesh.

%

\section{The Bound-Preserving Method}\label{sec:BPM_theory}

In this section we will present a finite element method that converges for non-convex domains and general shape-regular
triangulations, thus removing the assumptions made in the last section.  As a first step into the discussion, we will
review the bound-preserving method (BPM) proposed in \cite{Barrenechea-2024}, and prove an extension of the results known for a scalar
equation that will be instrumental in our proof of convergence of the coupled problem. 

\subsection{The Scalar Problem}

We consider  the following problem: given $f\in L^2(\Omega)$, find $z\in H^1_0(\Omega)$ such that
\begin{equation}\label{Laplacian}
(\nabla z,\nabla v)_\Omega = (f,v)_\Omega\qquad\forall v\in H^1_0(\Omega)\,.
\end{equation}
We now describe the BPM, proposed in \cite{Barrenechea-2024},  when applied to \eqref{Laplacian}, and extend the convergence results known for this method.
We let $\epsilon,k>0$. 
Define
\begin{equation}\label{Def:Vk}
  V_h^{\epsilon,k} \coloneqq \{ v_h\in V_h: v_h(\boldsymbol{x}_i)\in [-\epsilon, k]\;\textrm{for all}\; i=1,...,n\}\,,
\end{equation}
where $\{ \boldsymbol{x}_i \}_{i=1}^n$ are the internal nodes of $\mathcal{T}_h$.  Notice that $V_h^{\epsilon,k}$ is a closed convex subset of $V_h$.

\begin{Remark}
As we will mention in more detail in the next section, thanks to the maximum principle, the temperature
$u$ is non-negative in the domain $\Omega$.  This hints at the possibility of considering $\epsilon=0$
but, for technical reasons,  in the proof of the main convergence theorem below we will require $\epsilon>0$,
although it can be arbitrarily small.  So, we will assume that $\epsilon \ll k$, and in practical computations
$\epsilon$ will be taken to be near the machine precision. 
\end{Remark}

Denote the Lagrange basis functions of $V_h$ by $\{\phi_i\}_{i=1}^n$. Given $v_h\in V_h$ we split it as 
$v_h=v_h^{(\epsilon,k)+}+v_h^{{(\epsilon,k)-}}$, where
\begin{equation}\label{k-trunc-def}
v_h^{(\epsilon,k)+} \coloneqq \sum_{i=1}^n \max \{-\epsilon, \min \{ v_h(\boldsymbol{x}_i), k\} \}\phi_i, \qquad\textrm{and}\; v_h^{{(\epsilon,k)-}}\coloneqq v_h-v_h^{(\epsilon,k)+}\,.
\end{equation}

With these notations, the BPM proposed in \cite{Barrenechea-2024} reads: find $z_h\in V_h$ such that
\begin{equation}\label{BPM-scalar}
(\nabla z_h^{(\epsilon,k)+},\nabla v_h)_\Omega+ s(z_h^{(\epsilon,k)-},v_h)= (f,v_h)_\Omega \qquad\forall\, v_h\in V_h\,,
\end{equation}
where, the stabilising bilinear form $s$ is given by
\begin{equation}\label{s-definition}
	s(v_h, w_h) \coloneqq \alpha \sum_{i=1}^n v_h(\boldsymbol{x}_i)w_h(\boldsymbol{x}_i).
\end{equation}
Here $\alpha$ is a non-dimensional constant.  In \cite{Barrenechea-2024} the method \eqref{BPM-scalar} has been proven to be well-posed and optimally convergent,  in the sense that it can be proved that $z_h^{(\epsilon,k)+}$ is the
best approximation of $z$ in $V_h^{\epsilon,k}$, from which convergence can be deduced provided $z\in V^{\epsilon,k}$, and rates can be deduced if $z$ is smooth.
However, one question that has not been answered so far with respect to the BPM is the following: what if $z$, the solution to \eqref{Laplacian}, does not belong to $ V^{\epsilon,k}$?
In other words,  does the method \eqref{BPM-scalar} converge when the bounds imposed are not the physical ones? And if so,  to what?

In order to answer the question just posed, we recall a fundamental aspect of the analysis of \eqref{BPM-scalar}, namely,  that it  can be written in an equivalent way as a variational
inequality. More precisely, if $z_h$ solves \eqref{BPM-scalar}, then its constrained part,
$z_h^{(\epsilon,k)+}\in V_h^{\epsilon,k}$, solves the following variational inequality:
\begin{equation}\label{VI-scalar}
(\nabla z_h^{(\epsilon,k)+}, \nabla(v_h - z_h^{(\epsilon,k)+}))_\Omega \geq (f, v_h - z_h^{(\epsilon,k)+})_\Omega \qquad \forall\ v_h \in V_h^{\epsilon,k}.
\end{equation}
Reciprocally, if $z_h^{(\epsilon,k)+}\in V_h^{\epsilon,k}$ satisfies \eqref{VI-scalar}, then there
exists a unique $z_h^{(\epsilon,k)-}\in V_h$ such that $z_h \coloneqq z_h^{(\epsilon,k)+}+z_h^{(\epsilon,k)-}$
solves \eqref{BPM-scalar}.

One first remark that will be used in the results below is the following: taking $v_h=0\in V_h^{\epsilon,k}$ as test function in \eqref{VI-scalar}, rearranging terms, and using
the Cauchy-Schwarz and Poincar\'e inequalities  leads to the following uniform bound:
\begin{equation}\label{uhk-bound}
|z_h^{(\epsilon,k)+}|_{1,\Omega}\le C_P\,\|f\|_{0,\Omega}\qquad\forall \ h>0\,.
\end{equation}

In addition, the following stability and convergence result holds. Its proof is almost standard \cite{MR448949,MR391502,MR3393323} in the context of convergence of finite element methods for variational inequalities, but with a small twist that makes it suitable for the current setting. Namely, we allow the right hand side to change, and we assume that it converges only in $L^1(\Omega)$.

\begin{lemma}\label{lem:lem-non-physical}
	Let $f \in L^2(\Omega)$ and  $\{f_h\}_{h>0}\subset L^1(\Omega)$ be such that, as $h\to 0$, we have $ f_h\rightarrow f$ strongly in $L^1(\Omega)$.
	Let also $\mathfrak{z}_h \in V^{\epsilon,k}_h$ be the unique solution of
	\begin{equation}\label{VI-scalar-discrete}
	(\nabla \mathfrak{z}_h  \nabla(v_h - \mathfrak{z}_h))_\Omega \geq (f_h, v_h - \mathfrak{z}_h)_\Omega\qquad  \forall\ v_h \in V_h^{\epsilon,k},
	\end{equation}
	and $\mathfrak{z}\in V^{\epsilon,k}$ the unique solution  of the variational inequality
	\begin{equation}\label{VI-scalar-cont}
		(\nabla \mathfrak{z},  \nabla(v - \mathfrak{z}))_\Omega \geq (f, v - \mathfrak{z})_\Omega\qquad  \forall\ v \in V^{\epsilon,k}.
	\end{equation}
	Then, 
	$|\mathfrak{z}_h - \mathfrak{z}|_{1, \Omega} \rightarrow 0$ as $h \rightarrow 0$.
\end{lemma}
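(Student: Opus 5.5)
Since $f_h$ converges only in $L^1(\Omega)$, the right-hand side is controlled through uniform $L^\infty$ bounds, which are free here: both $\mathfrak{z}_h$ and every competitor $v_h\in V_h^{\epsilon,k}$ are piecewise linear with nodal values in $[-\epsilon,k]$, so they lie pointwise in $[-\epsilon,k]$. Hence $|(f_h-f,v_h-\mathfrak{z}_h)_\Omega|\le (k+\epsilon)\|f_h-f\|_{0,1,\Omega}\to0$, uniformly in such $v_h$. The plan is the Falk/Glowinski argument for variational inequalities. We combine this $L^\infty$ control with a recovery sequence $r_h\in V_h^{\epsilon,k}$ converging to $\mathfrak{z}$ in $H^1_0(\Omega)$, and we bound the continuous inequality tested with $\mathfrak{z}_h$, which is admissible because $\mathfrak{z}_h\in V^{\epsilon,k}$.

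\textbf{Step 1 (recovery sequence).} By Lemma~\ref{lem:D_k_closed_in_H1}, given $\delta>0$ pick $\varphi\in\mathcal{D}_{\epsilon,k}(\Omega)$ with $|\varphi-\mathfrak{z}|_{1,\Omega}<\delta$. Let $\mathcal{L}_h\varphi$ be the Lagrange interpolant of $\varphi$. Its nodal values are values of $\varphi$, so $\mathcal{L}_h\varphi\in V_h^{\epsilon,k}$, and it vanishes on $\partial\Omega$ since $\varphi$ has compact support. Moreover $|\mathcal{L}_h\varphi-\varphi|_{1,\Omega}\le Ch|\varphi|_{2,\Omega}\to0$. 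A diagonal argument then produces $r_h\in V_h^{\epsilon,k}$ with $r_h\to\mathfrak{z}$ in $H^1_0(\Omega)$. This is why Lagrange rather than Scott--Zhang interpolation is used: the latter need not preserve the nodal bounds.

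\textbf{Step 2 (error identity).} Write
\[
|\mathfrak{z}-\mathfrak{z}_h|_{1,\Omega}^2=(\nabla\mathfrak{z},\nabla(\mathfrak{z}-\mathfrak{z}_h))_\Omega+(\nabla\mathfrak{z}_h,\nabla(\mathfrak{z}_h-r_h))_\Omega+(\nabla\mathfrak{z}_h,\nabla(r_h-\mathfrak{z}))_\Omega .
\]
Apply \eqref{VI-scalar-cont} with $v=\mathfrak{z}_h$ to the first term, giving the bound $(f,\mathfrak{z}-\mathfrak{z}_h)_\Omega$. Apply \eqref{VI-scalar-discrete} with $v_h=r_h$ to the second term, giving the bound $(f_h,\mathfrak{z}_h-r_h)_\Omega$. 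Adding these yields
\[
(f,\mathfrak{z}-r_h)_\Omega+(f_h-f,\mathfrak{z}_h-r_h)_\Omega+(\nabla\mathfrak{z}_h,\nabla(r_h-\mathfrak{z}))_\Omega .
\]
The first term tends to zero because $f\in L^2(\Omega)$ and $r_h\to\mathfrak{z}$ in $L^2(\Omega)$. The second is bounded by $(k+\epsilon)\|f_h-f\|_{0,1,\Omega}\to0$, since $\mathfrak{z}_h$ and $r_h$ both take values in $[-\epsilon,k]$.

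\textbf{Step 3 (the gradient term).} For the last term I need a uniform bound on $|\mathfrak{z}_h|_{1,\Omega}$. The bound \eqref{uhk-bound} is unavailable, because it uses $\|f_h\|_{0,\Omega}$, which is not bounded here. Instead I test \eqref{VI-scalar-discrete} with $v_h=0\in V_h^{\epsilon,k}$, obtaining
\[
|\mathfrak{z}_h|_{1,\Omega}^2\le(f_h,\mathfrak{z}_h)_\Omega\le \max\{\epsilon,k\}\,\|f_h\|_{0,1,\Omega},
\]
which is uniformly bounded. Then $(\nabla\mathfrak{z}_h,\nabla(r_h-\mathfrak{z}))_\Omega\le C|r_h-\mathfrak{z}|_{1,\Omega}\to0$.

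Collecting Steps 2 and 3 gives $|\mathfrak{z}-\mathfrak{z}_h|_{1,\Omega}\to0$. The main obstacle is the recovery sequence in Step 1, which requires the density Lemma~\ref{lem:D_k_closed_in_H1}. The $L^1$ convergence of $f_h$ is harmless only because of the built-in nodal bounds, which give $\|v_h\|_{0,\infty,\Omega}\le\max\{\epsilon,k\}$ for every $v_h\in V_h^{\epsilon,k}$. Existence and uniqueness of $\mathfrak{z}_h$ and $\mathfrak{z}$ follow from Stampacchia's theorem; for $\mathfrak{z}$ one uses that $f\in L^2(\Omega)$ and that $V^{\epsilon,k}$ is closed and convex.
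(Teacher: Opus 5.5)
Your proof is correct and follows essentially the same route as the paper. Both arguments take a Lagrange interpolant of a density approximant from $\mathcal{D}_{\epsilon,k}(\Omega)$ as the admissible discrete competitor, test the continuous inequality with $\mathfrak{z}_h$ and the discrete one with that competitor, control the $L^1$-convergent data through the built-in pointwise bounds, and obtain the uniform $H^1$ bound by testing with $v_h=0$. The only differences are cosmetic: you fold the $\delta\to0$ limit into a diagonal recovery sequence $r_h$, whereas the paper lets $h\to0$ for fixed $\delta$ and then sends $\delta\to0$, and your packaging avoids the paper's use of $\lim$ where $\limsup$ is meant.
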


\begin{proof}
	First note that \eqref{VI-scalar-discrete} and \eqref{VI-scalar-cont} can be written as follows:
	\begin{align*}
		-(\nabla \mathfrak{z}_h, \nabla(v_h - \mathfrak{z}_h))_\Omega &\leq -(f_h, v_h - \mathfrak{z}_h)_\Omega \qquad \forall \ v_h \in V_h^{\epsilon,k}, \\
		-(\nabla \mathfrak{z},\nabla(v - \mathfrak{z}))_\Omega &\leq -(f, v - \mathfrak{z})_\Omega  \qquad\forall \ v \in V^{\epsilon,k}.
	\end{align*}
	
	Using that $\mathcal{D}_{\epsilon,k}(\Omega)$ is dense in $V^{\epsilon,k}$, there exists a sequence $\{\phi_\delta\}_{\delta >0}\subset \mathcal{D}_{\epsilon,k}(\Omega)$ such
	that $\lim_{\delta\to 0}\|\phi_\delta-\mathfrak{z}\|_{1,\Omega}=0$.  Let  $i_h:C^0(\overline{\Omega})\to V_h$ be the Lagrange interpolation operator
	(see, e.g., \cite{Raviart-Thomas-1992}). Then,  for every $\delta>0$, we have that $i_h(\phi_\delta)\in V_h^{\epsilon,k}$.
	In addition, $\mathfrak{z}_h\in V_h^{\epsilon,k} \subset V^{\epsilon,k}$. Then, using the above rewriting of \eqref{VI-scalar-discrete} and \eqref{VI-scalar-cont} we get
	\begin{align*}
		0 &\leq (\nabla(\mathfrak{z} - \mathfrak{z}_h), \nabla(\mathfrak{z} - \mathfrak{z}_h))_\Omega \\
		&= (\nabla \mathfrak{z} , \nabla(\mathfrak{z} - \mathfrak{z}_h))_\Omega - (\nabla \mathfrak{z}_h, \nabla(\mathfrak{z} - \mathfrak{z}_h))_\Omega \\
		&= (\nabla \mathfrak{z} , \nabla(\mathfrak{z} - \mathfrak{z}_h))_\Omega - (\nabla \mathfrak{z}_h, \nabla(\mathfrak{z} - \phi_\delta +\phi_\delta - i_h(\phi_\delta)+i_h(\phi_\delta) - \mathfrak{z}_h ))_\Omega\\
		&= -  (\nabla \mathfrak{z} , \nabla(\mathfrak{z}_h - \mathfrak{z}))_\Omega -  (\nabla \mathfrak{z}_h , \nabla( i_h(\phi_\delta) - \mathfrak{z}_h))_\Omega-
		(\nabla \mathfrak{z}_h, \nabla(\mathfrak{z} - \phi_\delta +\phi_\delta - i_h(\phi_\delta))_\Omega\\
		&\leq -(f , \mathfrak{z}_h - \mathfrak{z})_\Omega - (f_h,   i_h(\phi_\delta) - \mathfrak{z}_h)_\Omega -  (\nabla \mathfrak{z}_h, \nabla(\mathfrak{z} - \phi_\delta))_\Omega
		 +(\nabla \mathfrak{z}_h,\nabla (\phi_\delta - i_h(\phi_\delta))_\Omega\,.
	\end{align*}

	Next, we rewrite the first two terms above as follows
	\begin{align*}
		-(f , \mathfrak{z}_h - \mathfrak{z})_\Omega &- (f_h,  i_h(\phi_\delta) - \mathfrak{z}_h)_\Omega  = -(f , \mathfrak{z}_h )_\Omega + (f , \mathfrak{z})_\Omega - (f_h,  i_h(\phi_\delta) )_\Omega + (f_h, \mathfrak{z}_h)_\Omega \\
		& = (f_h - f , \mathfrak{z}_h )_\Omega +  (f - f_h, i_h(\phi_\delta))_\Omega+		
		(f, \mathfrak{z} -\phi_\delta)_\Omega+ (f,\phi_\delta- i_h(\phi_\delta) )_\Omega\\
		&= \sum_{i=1}^4 \mathrm{T}_i.
	\end{align*}
	
	Since $\mathfrak{z}_h\in V_h^{\epsilon,k}$ we have that $\|\mathfrak{z}_h\|_{0,\infty,\Omega}\le k$ (recall that $0<\epsilon\ll k$), and so thanks to the
	strong convergence of $f_h$ to $f$ in $L^1(\Omega)$ we get
	\begin{equation*}
	\mathrm{T}_1 \le \|f_h-f\|_{0,1,\Omega}\|\mathfrak{z}_h\|_{0,\infty,\Omega}\le k\,  \|f_h-f\|_{0,1,\Omega}\to 0\quad\textrm{as}\; h\to 0\,.
	\end{equation*}
	In a completely analogous way we see that $\mathrm{T}_2\to 0$ as $h\to 0$ as well.
	The fact that $\mathrm{T}_4\to 0$ follows from the fact that $\lim_{h\to 0}\|i_h(\phi_\delta)-\phi_\delta\|_{0,\Omega}=0$.
	
	Now,  since $\mathfrak{z}_h\in V_h^{\epsilon,k}$, we take $v_h=0$ as test function in \eqref{VI-scalar-discrete}, and use that
	$\{ f_h\}_{h>0}$ is uniformly bounded in $L^1(\Omega)$ to get the following
	uniform bound for $\mathfrak{z}_h$
	\begin{equation}\label{Eq:zh-bound-W11}
	\|\nabla\mathfrak{z}_h\|_{0,\Omega}^2\le (f_h,\mathfrak{z}_h)_\Omega\le\| f_h\|_{0,1,\Omega}\|\mathfrak{z}_h\|_{0,\infty,\Omega}
	\le Ck\,.
	\end{equation}

	Gathering the above results
	 and using \eqref{Eq:zh-bound-W11} we get to
	\begin{align*}
		0 &\leq\, \lim_{h \rightarrow 0}|\mathfrak{z} - \mathfrak{z}_h|^2_{1, \Omega} \\
		&\leq\, \lim_{h \rightarrow 0} \Big( \sum_{i=1}^4 \mathrm{T}_i + \|\nabla \mathfrak{z}_h\|_{0,\Omega}\|\nabla(\mathfrak{z} - \phi_\delta)\|_{0,\Omega}+
		\|\nabla \mathfrak{z}_h\|_{0,\Omega}\|\nabla (\phi_\delta - i_h(\phi_\delta))\|_{0,\Omega}\Big)\\
		&\leq\,  0 + \|f\|_{0,\Omega}\|\mathfrak{z} - \phi_\delta\|_{0,\Omega}
		+ Ck^{1/2}\,\big( \|\nabla(\mathfrak{z} - \phi_\delta)\|_{0,\Omega} +\lim_{h \rightarrow 0}\|\nabla (\phi_\delta - i_h(\phi_\delta))\|_{0,\Omega}\Big)\\
		&\leq\,  \|f\|_{0,\Omega}\|\mathfrak{z} - \phi_\delta\|_{0,\Omega}+ Ck^{1/2}\, \|\nabla(\mathfrak{z} - \phi_\delta)\|_{0,\Omega}\,,
	\end{align*}
	for every $\delta>0$. Taking $\delta\to 0$ in the above inequality finishes the proof.
\end{proof}

This last result shows that the constrained part of the solution, $z_h^{(\epsilon,k)+}$, of the  bound-preserving method converges strongly to the best approximation,  in $H^1_0(\Omega)$, of the exact solution $z$ from $V^{\epsilon,k}$.  The surprising part of this is that this result \emph{is independent of whether the
values $-\epsilon$ and $k$ are appropriate or not}. This will be instrumental in the proof of convergence we
present in the next section.

\subsection{The Bound-Preserving Method for the Coupled Induction Heating Problem}

We begin by observing that, since the right hand side of \eqref{eq:strong_heat} is nonnegative, the maximum principle implies that $u \geq 0$. In addition, since we have showed, under the sole assumption that $\Omega$ is Lipschitz, that $|\nabla H|^2 \in L^2(\Omega)$; we recall that for every $\delta>0$ we have $u \in H^{3/2-\delta}(\Omega)$ and, therefore, $u \in C(\overline{\Omega})$. Thus, there exists a $K>0$ for which we have
\[
  0 \leq u(x) \leq K, \qquad \forall x\in \bar\Omega.
\]
This motivates the use of the BPM for the heat equation in the induction heating problem. We describe this approach next. 

The BPM for the induction heating problem reads as follows.  Choose $k,\epsilon>0$ with $\epsilon \ll k$. Then we need to find $u_h^{\epsilon,k}\in V_h$ and $H_h\in W_{h,\C}$ such that $\tilde{H}_h = H_h - H_\circ \in V_{h,\C}$ and, for all $(Q_h, v_h) \in V_{h,\C} \times V_h$:
\begin{subequations}\label{eq:BPM_full_form_coupled}
	\begin{align}
		(\nabla \tilde{H}_h, \nabla Q_h)_{\Omega, \C} +  (i\omega\mu(u_h^{(\epsilon,k)+}) \tilde{H}_h, Q_h)_{\Omega, \C} &= (i\omega \mu(u_h^{(\epsilon,k)+})H_\circ, Q_h)_{\Omega, \C}, \label{eq:magn_fem_BPM_coupled}\\
		(\nabla u_h^{(\epsilon,k)+}, \nabla v_h)_{\Omega} + s(u_h^{(\epsilon,k)-}, v_h) &= (|\nabla H_h|^2,v_h)_{\Omega}\,.\label{eq:heat_fem_BPM_coupled}
	\end{align}
\end{subequations}
Here, $s(\cdot,\cdot)$ is the stabilising bilinear form defined in \eqref{s-definition}. It is important to notice that $u_h^{(\epsilon,k)+}$ is truncated as defined in \eqref{k-trunc-def}, and,  that $H_h$ is implicitly dependent on $k$ and $\epsilon$ via $u_h^{(\epsilon,k)+}$.

We start by showing  existence of solutions for \eqref{eq:BPM_full_form_coupled}.
  
\begin{theorem}
	If $\Omega \subset \R^2$ is a Lipschitz polygonal domain then, for every $h>0$,  the problem \eqref{eq:BPM_full_form_coupled} has at least one solution.
\end{theorem}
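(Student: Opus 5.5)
The plan is to prove existence with Brouwer's fixed point theorem, mimicking the proof of Theorem~\ref{Theo:Exis-Galerkin}. The crucial difference is that the uniform $L^4$ bound on $\nabla H_h$ (Lemma~\ref{lem:H_bounded_in_L4}) is no longer available, since we assume neither convexity nor quasi-uniformity. Its role will be played by the bound constraint. Because the fixed point is sought for a fixed $h$, every finite dimensional norm may be used freely, and the constants are allowed to depend on $h$.

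The fixed point map is defined on the constrained set $V_h^{\epsilon,k}$, which is compact, convex and nonempty in the finite dimensional space $V_h$. Given $\hat w_h\in V_h^{\epsilon,k}$, we first solve \eqref{Prob:Hh-for-any-zh} with $z_h=\hat w_h$ to obtain $\varphi_h=M_{1,h}(\hat w_h)$. Lemma~\ref{lem:H_h_bounded_in_H1} shows this problem is well posed, with $\|\varphi_h\|_{1,\Omega,\C}\le C_1$. Next we solve the scalar BPM \eqref{BPM-scalar} with right-hand side $f=|\nabla\varphi_h|^2$. This $f$ is piecewise constant, hence in $L^2(\Omega)$, and the well-posedness of \eqref{BPM-scalar} from \cite{Barrenechea-2024} applies. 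Equivalently, we solve the variational inequality \eqref{VI-scalar}, which is uniquely solvable in $V_h^{\epsilon,k}$ by Stampacchia's theorem. The output is $M_{2,h}(\varphi_h)\coloneqq z_h^{(\epsilon,k)+}\in V_h^{\epsilon,k}$. Set $M_h=M_{2,h}\circ M_{1,h}:V_h^{\epsilon,k}\to V_h^{\epsilon,k}$. Its image lies in the convex compact set by construction, so no a priori bound is needed.

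If $u^+=M_h(u^+)$, we recover the full discrete solution from the equivalence stated after \eqref{VI-scalar}. There is a unique $u_h^{(\epsilon,k)-}$ such that $u_h^{\epsilon,k}\coloneqq u^++u_h^{(\epsilon,k)-}$ solves \eqref{eq:heat_fem_BPM_coupled} with $H_h=M_{1,h}(u^+)$. Because $u^+$ lies in $V_h^{\epsilon,k}$, its nodal truncation is itself, so the positive part of $u_h^{\epsilon,k}$ defined through \eqref{k-trunc-def} is exactly $u^+$. Consequently \eqref{eq:magn_fem_BPM_coupled} holds as well.

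It remains to prove continuity of $M_h$. For $M_{1,h}$ we repeat the Lipschitz argument from Theorem~\ref{Theo:Exis-Galerkin}. The only place $C_3$ entered there was through $\|\tilde\varphi_k\|_{0,4}$, and this is controlled instead by Sobolev's embedding together with the $H^1$ bound $C_1$. For the map $\varphi_h\mapsto|\nabla\varphi_h|^2$, continuity into $L^1$ follows from the identity used in that proof together with Cauchy--Schwarz, since $\nabla\varphi_h$ is bounded in $L^2$. For the variational inequality, we test the inequalities for data $f_1$ and $f_2$ with each other's solution and add them. This gives
\[
|z_1-z_2|_{1,\Omega}^2\le (f_1-f_2,z_1-z_2)_\Omega\le \|f_1-f_2\|_{0,1,\Omega}\,\|z_1-z_2\|_{0,\infty,\Omega}\le 2k\,\|f_1-f_2\|_{0,1,\Omega},
\]
since both solutions take values in $[-\epsilon,k]$ with $\epsilon\ll k$. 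This yields Hölder-$1/2$ continuity in $L^1$, which suffices for Brouwer's theorem.

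I expect the main obstacle to be setting up $M_h$ on the constrained set so that the fixed point genuinely solves \eqref{eq:BPM_full_form_coupled}, in particular checking that the nodal truncation of the reconstructed $u_h^{\epsilon,k}$ returns $u^+$. The other delicate point is continuity of the solution map of the variational inequality with only $L^1$ control on the data. Both are handled by the $L^\infty$ bound that the constraint provides.
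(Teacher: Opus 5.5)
Your proposal is correct and is essentially the paper's proof. You apply Brouwer's theorem to $M_{2,h}\circ M_{1,h}$ on the constrained set, and you get continuity of the variational-inequality solution map from the same estimate $|z_1-z_2|_{1,\Omega}^2\le 2k\,\|f_1-f_2\|_{0,1,\Omega}$. Working directly on the compact convex set $V_h^{\epsilon,k}$ instead of the paper's set $K$, and noting that $C_3$ is not needed for continuity of $M_{1,h}$, are harmless and slightly cleaner variations.

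The reconstruction step needs tidying. That $u^+$ truncates to itself does not by itself determine the truncation of $u^++u_h^{(\epsilon,k)-}$. You do not need that argument: at a fixed point, take $u_h^{\epsilon,k}$ to be the scalar BPM solution $z_h$ that defines $M_{2,h}(M_{1,h}(u^+))$. Its constrained part is $u^+$ by construction.
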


\begin{proof} The proof follows very similar lines as those of Theorem~\ref{Theo:Exis-Galerkin}. So, we just detail the differences.  Once again,
we define the mapping 
\begin{equation*}
		\tilde{M}_h\coloneqq \tilde{M}_{2,h} \circ M_{1,h}:V_h^{\epsilon,k} \rightarrow V_h^{\epsilon,k}\;,\; \hat{w}_h \mapsto w_h^{(\epsilon,k)+}, \quad \text{ where } M_{1,h}:\hat{w}_h \mapsto \varphi_h, \qquad \tilde{M}_{2,h} : \varphi_h \mapsto w_h^{(\epsilon,k)+}.
	\end{equation*}
	Here, for a given $\hat{w}_h \in V_h^{\epsilon,k}$, we have set $M_{1,h}(\hat{w}_h)=\varphi_h$ and $\tilde{M}_{2,h}(\varphi_h)=w_h^{(\epsilon,k)+}$, where  $(\varphi_h,  w_h) \in W_{h, \C} \times V_h$ solves: $\tilde{\varphi}_h \coloneqq \varphi_h - H_\circ \in V_{h, \C}$, and
	\begin{align}
		(\nabla \tilde{\varphi}_h, \nabla Q_h)_{\Omega, \C}+  (i \omega \mu(\hat{w}_h)\tilde{\varphi}_h, Q_h)_{\Omega, \C} &= (i \omega \mu(\hat{w}_h)H_\circ, Q_h)_{\Omega, \C}, \label{eq:galerkin_H_decoupled-bis}\\
		(\nabla w_h^{(\epsilon,k)+}, \nabla v_h)_{\Omega}+s(w_h^{(\epsilon,k)-},v_h) &= \left(|\nabla \varphi_h|^2, v_h\right)_{\Omega},\label{eq:galerkin_u_decoupled-bis}
	\end{align}
	for all $(Q_h, v_h) \in V_{h, \C} \times V_h$.  Both discrete problems above are well-posed.
	In fact, \eqref{eq:galerkin_H_decoupled-bis} is  the exact same equation as in the proof of Theorem~\ref{Theo:Exis-Galerkin}.  
	The well-posedness of \eqref{eq:galerkin_u_decoupled-bis} is 
	proven in \cite[Theorem~3.2]{Barrenechea-2024}. 
	This shows that the mapping $\tilde{M}_h$ is well-defined.
	
	In order to prove the continuity of $\tilde{M}_h$ 
	we first use that $M_{1,h}$ is just the restriction to $V_h^{\epsilon,k}$ of the mapping
	defined in the proof of Theorem~\ref{Theo:Exis-Galerkin} and
	thus it is continuous, and the uniform bound  $\|\nabla\varphi_h\|_{0,\Omega}\le C_1$ from Lemma~\ref{lem:H_h_bounded_in_H1} still holds.  
	In addition, taking $v_h=w_h^{(\epsilon,k)+}$ as a test function in \eqref{eq:galerkin_u_decoupled-bis},
	using that $s(w_h^{(\epsilon,k)-},w_h^{(\epsilon,k)+})\ge 0$ (see \cite[Theorem~3.2]{Barrenechea-2024}),
	and the H\"older  inequality, the following bound follows 
	\begin{equation}\label{Eq:unif-bound-uhk}
	| w_h^{(\epsilon,k)+} |^2_{1,\Omega}\le | w_h^{(\epsilon,k)+} |^2_{1,\Omega} + s(w_h^{(\epsilon,k)-},w_h^{(\epsilon,k)+})= \left(|\nabla \varphi_h|^2, w_h^{(\epsilon,k)+}\right)_{\Omega}
	\le \|\nabla \varphi_h\|_{0,\Omega,\C}^2\, k\le C_1^2k\,.
	\end{equation}
	
	Finally,  to prove the continuity of $\tilde{M}_{2,h}$,  let $Q_{h,1},\ Q_{h,2}\in V_{h,\C}$, and let $x_{h,i}^{(\epsilon,k)+}=\tilde{M}_{2,h}(Q_{h,i})$, for $i=1,2$.
	Using the variational inequality characterisation of the bound-preserving method given by \eqref{VI-scalar} we rewrite \eqref{eq:galerkin_u_decoupled-bis} as
	\[
	  (\nabla x_{h,i}^{(\epsilon,k)+}, \nabla( v_h - x_{h,i}^{(\epsilon,k)+}) )_\Omega \geq ( |\nabla Q_{h,i}|^2, v_h - x_{h,i}^{(\epsilon,k)+})_\Omega, \qquad \forall v_h \in V_h^{\epsilon,k}.
	\]
	We then set $v_h = x_{h,2}^{(\epsilon,k)+}$ in the  equation with $i=1$, and vice versa, add the resulting inequalities, and obtain
	\begin{align*}
	  \|\nabla ( x_{h,1}^{(\epsilon,k)+} - x_{h,2}^{(\epsilon,k)+})\|^2_{0,\Omega} &\leq\, \| |\nabla Q_{h,1}|^2-|\nabla Q_{h,2}|^2\|_{0,1,\Omega}\|x_{h,1}^{(\epsilon,k)+} - x_{h,2}^{(\epsilon,k)+}\|_{0,\infty,\Omega}\\
    &\le\, 2k \, \| |\nabla Q_{h,1}|^2-|\nabla Q_{h,2}|^2\|_{0,1,\Omega}\,,
	\end{align*}
	and so the mapping $\tilde{M}_{2,h}$ is continuous.

Collecting all the above results,  we use the uniform bound given by \eqref{Eq:unif-bound-uhk} and define
\[
  K \coloneqq \left\{ v_h\in V_h^{\epsilon,k}: |v_h|_{1,\Omega}\le C_1\sqrt{k} \right\}.
\]
Then, $\tilde{M}_h$  maps
$K$ onto $K$, and is continuous. Hence, applying Brouwer's fixed point theorem, it follows that $\tilde{M}_h$ has a fixed point $u_h^{(\epsilon,k)+}\in V_h^{\epsilon,k}$, and defining $H_h =M_{1,h}(u_h^{(\epsilon,k)+})$ and $u_h^{\epsilon,k}:=u_h^{(\epsilon,k)+}+ u_h^{(\epsilon,k)-}$
 (where $u_h^{(\epsilon,k)-}$ is the complementary part given by \eqref{eq:heat_fem_BPM_coupled}), then $(H_h,u_h^{\epsilon,k})$ solves \eqref{eq:BPM_full_form_coupled}.
\end{proof}

We now prove the main result of this section, namely, that the solution of \eqref{eq:BPM_full_form_coupled} converges, up to a subsequence, to a solution of \eqref{eq:weak_existence_form} as $h\to 0$ and 
$k\to \infty$ (in that order), and for every $\epsilon >0$.
In order to prove it, the following higher integrability result will be essential. Its proof is postponed to Appendix~\ref{sec:RegularityObstacle}.

\begin{lemma}\label{conj:conjecture}
Let $F \in L^2(\Omega)$ be such that $F \geq 0$ almost everywhere in $\Omega$. Let $\Phi\in V^{\epsilon,k}$ be the unique solution of
\begin{equation}\label{limit-ineq-uhatk}
	(\nabla \Phi,\nabla (v-\Phi))_\Omega\ge (F, v-\Phi)_\Omega, \qquad\forall\, v\in V^{\epsilon,k}\,.
\end{equation}
	Then, $\Phi\ge 0$ almost everywhere in $\Omega$,  and, for every $p \in [P',P]$ with $P$ as in \eqref{eq:JKWm1p}, we have $\Phi \in W^{1,p}(\Omega) \cap H^1_0(\Omega)$, and there exists a constant $C(p)>0$, independent of $\epsilon$ and $k$,  such that
	\begin{equation*}
		\|\Phi\|_{1,p, \Omega} \leq C(p)\,\|F\|_{0,\Omega }\, .
	\end{equation*}
\end{lemma}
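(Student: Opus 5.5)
We will prove the two claims separately: first nonnegativity, then higher integrability via a Lewy--Stampacchia type bound that reduces the obstacle problem to a Poisson problem.

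\emph{Nonnegativity.} We test \eqref{limit-ineq-uhatk} with $v=\max\{\Phi,0\}=\Phi+\Phi^-$, where $\Phi^-=\max\{-\Phi,0\}\in H^1_0(\Omega)$. This $v$ lies in $H^1_0(\Omega)$ and takes values in $[0,k]\subset[-\epsilon,k]$, so it is admissible. Then $v-\Phi=\Phi^-$, and we obtain $(\nabla\Phi,\nabla\Phi^-)_\Omega\ge (F,\Phi^-)_\Omega\ge 0$. Since $(\nabla\Phi,\nabla\Phi^-)_\Omega=-\|\nabla\Phi^-\|_{0,\Omega}^2$, it follows that $\Phi^-=0$, that is, $\Phi\ge0$ a.e. In particular, the lower obstacle $-\epsilon$ is never active. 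Hence $\Phi$ also solves the one-sided problem posed on $\{v\in H^1_0(\Omega): v\le k\}$: for any admissible $v\le k$, the function $\max\{v,\Phi\wedge\ldots\}$ can be replaced by a convex combination $\Phi+t(v-\Phi)$ with $t$ small, which stays above $-\epsilon$ where needed. More carefully, one checks that \eqref{limit-ineq-uhatk} extends to all $v\le k$ by a truncation argument.

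\emph{Lewy--Stampacchia inequality.} The core step is to show that the measure $\lambda\coloneqq -\Delta\Phi - F$ satisfies
\[
-F^{+}\chi_{\{\Phi=k\}}\le \lambda\le 0
\]
in the sense of distributions, or more simply $-F\le \lambda \le 0$, so that $0\le -\Delta\Phi\le F$. We follow the classical penalisation proof of Kinderlehrer--Stampacchia. Let $\Phi_\eta\in H^1_0(\Omega)$ solve
\[
-\Delta\Phi_\eta+\tfrac1\eta(\Phi_\eta-k)^+ = F.
\]
Testing with $(\Phi_\eta-k)^+$ and using a comparison argument with the upper barrier yields $0\le \tfrac1\eta(\Phi_\eta-k)^+\le F$; here we use $F\ge0$ and the fact that the constant $k$ is harmonic. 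We then pass to the limit $\eta\to0$: $\Phi_\eta\to\Phi$ in $H^1_0(\Omega)$, and the penalty terms converge weakly in $L^2(\Omega)$ to some $g$ with $0\le g\le F$ and $-\Delta\Phi=F-g$. This limit passage, together with identifying the limit as the solution of the variational inequality, is the main technical obstacle, but it is standard (monotone operator theory). The bound $0\le g\le F$ is the key point, and it is independent of $k$ and $\epsilon$.

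\emph{Conclusion.} Now $\Phi$ is the weak solution of $-\Delta\Phi=G$ with $\Phi=0$ on $\partial\Omega$, where $G\coloneqq F-g$ satisfies $\|G\|_{0,\Omega}\le\|F\|_{0,\Omega}$ because $0\le G\le F$. By the continuous embedding $L^2(\Omega)\hookrightarrow W^{-1,p}(\Omega)$ for $p\in[P',P]$ (valid in two dimensions), \eqref{eq:JKWm1p} gives
\[
\|\nabla\Phi\|_{0,p,\Omega}\le C_{JK}(p)\,C_i\,\|F\|_{0,\Omega}.
\]
Poincar\'e's inequality \eqref{Poincare} then yields the $W^{1,p}$ bound, with a constant depending only on $p$ and $\Omega$. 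This gives the claimed estimate for $\Phi$ with $C(p)$ independent of $\epsilon$ and $k$.
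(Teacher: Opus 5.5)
Your nonnegativity step is exactly the paper's. Your overall plan (penalise, control the penalty uniformly, apply \eqref{eq:JKWm1p}) is also the paper's. The problem is the step you single out as the key point, which is false.

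\textbf{The pointwise bound fails.} For the standard penalised problem $-\Delta\Phi_\eta+\frac1\eta(\Phi_\eta-k)^+=F$ there is no pointwise bound $\frac1\eta(\Phi_\eta-k)^+\le F$, and no comparison argument can produce one. Here is a counterexample.
\begin{itemize}
\item Let $\phi_0\in H^1_0(\Omega)$ solve $-\Delta\phi_0=1$, and take $F\equiv M$ with $M>k/\max\phi_0$.
\item Then the penalised solution must exceed $k$ at some point. Otherwise the penalty vanishes and $\Phi_\eta=M\phi_0$, whose maximum exceeds $k$.
\item Now set $F=0$ on a tiny disc $B$ around such a point. By comparison, the solution decreases by at most $M\psi_B$, where $-\Delta\psi_B=\chi_B$, and $\psi_B\to0$ uniformly as $B$ shrinks.
\item Hence $\Phi_\eta>k$ persists on $B$. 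The penalty term is then strictly positive on a set of positive measure where $F=0$.
\end{itemize}
Testing with $\frac1\eta(\Phi_\eta-k)^+\in H^1_0(\Omega)$, and using $(\nabla\Phi_\eta,\nabla(\Phi_\eta-k)^+)_\Omega\ge0$, gives only the $L^2$ bound $\|\frac1\eta(\Phi_\eta-k)^+\|_{0,\Omega}\le\|F\|_{0,\Omega}$. The inequality $0\le-\Delta\Phi\le F$ does hold for the limit; this is the Lewy--Stampacchia inequality. To derive it by penalisation you need a modified penalty of Kinderlehrer--Stampacchia type, for example $-\Delta\Phi_\eta+F\,\theta_\eta(\Phi_\eta-k)=F$ with $\theta_\eta$ a smoothed Heaviside function with values in $[0,1]$, so that the bound is built in.

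\textbf{The $L^2$ bound is enough.} You do not need the pointwise bound. The $L^2$ bound is uniform in $\eta$, $\epsilon$ and $k$, so the weak limit $g$ satisfies $\|g\|_{0,\Omega}\le\|F\|_{0,\Omega}$. Hence $-\Delta\Phi=G$ with $\|G\|_{0,\Omega}\le 2\|F\|_{0,\Omega}$, and your conclusion follows with an extra factor $2$. This is the paper's route: it penalises both obstacles, bounds each penalty term in $L^2$, and obtains $\|G\|_{0,\Omega}\le 5\|F\|_{0,\Omega}$. It then applies \eqref{eq:JKWm1p} to $\Phi_\delta$ and passes to the limit by weak lower semicontinuity.

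\textbf{The one-sided reduction.} Penalising both obstacles also avoids your reduction to the one-sided problem, whose justification is not correct as written. The function $\Phi+t(v-\Phi)$ need not lie above $-\epsilon$ for any $t>0$ if $v$ is unbounded below. If you keep the one-sided penalty, argue instead as follows. Testing the penalised equation with $\Phi_\eta^-$ gives $\Phi_\eta\ge0$, so the limit lies in $V^{\epsilon,k}$. It satisfies \eqref{limit-ineq-uhatk} because $V^{\epsilon,k}\subset\{v\in H^1_0(\Omega): v\le k\}$, and it therefore equals $\Phi$ by uniqueness.
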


With this at hand, we are ready to prove convergence.

\begin{theorem}
Let $\{(H_h, u_h^{(\epsilon,k)+ })\}_{h>0}$ be a family of solutions of \eqref{eq:BPM_full_form_coupled}. Then, up to a subsequence,  we have that
$(H_h, u_h^{(\epsilon,k)+ })\to (H,u)$ strongly in $H^1(\Omega;\C)\times H^1_0(\Omega)$ as $h\to 0$, and $k\to \infty$ (in that order), and for every $\epsilon >0$.  In addition, $(H,u)$ solves
\eqref{eq:weak_existence_form}.
\end{theorem}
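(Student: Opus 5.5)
We argue in two stages: first $h\to 0$ with $\epsilon,k$ fixed, then $k\to\infty$. Write $u_h \coloneqq u_h^{(\epsilon,k)+}\in V_h^{\epsilon,k}$.

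\emph{Stage 1: $h\to 0$.} For fixed $\epsilon,k$, Lemma~\ref{lem:H_h_bounded_in_H1} gives $\|H_h\|_{1,\Omega,\C}\le C_1$. Bound \eqref{Eq:unif-bound-uhk} gives $|u_h|_{1,\Omega}\le C_1\sqrt{k}$. Hence, up to a subsequence, $H_h\rightharpoonup H^k$ in $H^1(\Omega;\C)$ and $u_h\rightharpoonup u^k$ in $H^1_0(\Omega)$, with strong convergence in every $L^p$, $p<\infty$. Since $V^{\epsilon,k}$ is convex and closed (Lemma~\ref{lem:D_k_closed_in_H1}) and $V_h^{\epsilon,k}\subset V^{\epsilon,k}$, it is weakly closed, so $u^k\in V^{\epsilon,k}$.

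Next, we repeat verbatim the argument of Theorem~\ref{Theo:Conv-Galerkin} for the $H$-equation. That argument used only $u_h\to u$ in $L^4$, the $H^1$ bound, Lipschitz continuity of $\mu$, and the auxiliary problems \eqref{Eq:interm-PDe-H}; it never used the $L^4$ bound on $\nabla H_h$. It therefore shows that $H^k$ solves \eqref{eq:weak_H_coupled} with $u^k$ and that $H_h\to H^k$ strongly in $H^1(\Omega;\C)$. Consequently $f_h\coloneqq|\nabla H_h|^2\to f\coloneqq|\nabla H^k|^2$ strongly in $L^1(\Omega)$, and $f\in L^2(\Omega)$ by Lemma~\ref{lem:RHS_in_L2}.

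The step I expect to require the most care is the limit in the heat equation, because the coupling is nonlinear and $f_h$ converges only in $L^1$. By \eqref{VI-scalar}, $u_h$ solves \eqref{VI-scalar-discrete} with datum $f_h$. Let $\mathfrak{z}^k\in V^{\epsilon,k}$ solve \eqref{VI-scalar-cont} with datum $f$. Lemma~\ref{lem:lem-non-physical} was built precisely for $L^1$-convergent data, and it gives $u_h\to\mathfrak{z}^k$ strongly in $H^1_0(\Omega)$. Hence $u^k=\mathfrak{z}^k$: the pair $(H^k,u^k)$ solves the coupled problem in which the heat equation is replaced by the double obstacle inequality on $V^{\epsilon,k}$.

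\emph{Stage 2: $k\to\infty$.} By Lemma~\ref{lem:H_bounded_H1}, $\|H^k\|_{1,4,\Omega,\C}\le C_2$ uniformly in $k$, so $\||\nabla H^k|^2\|_{0,\Omega}\le C_2^2$. Lemma~\ref{conj:conjecture} with $F=|\nabla H^k|^2\ge 0$ and some $p\in(2,P]$ then gives $\|u^k\|_{1,p,\Omega}\le C(p)C_2^2$, independently of $k$ and $\epsilon$. By the embedding $W^{1,p}\hookrightarrow L^\infty$ for $p>2$, $\|u^k\|_{0,\infty,\Omega}\le M$ with $M$ independent of $k$.

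Fix any $k_0>M$ and consider $k\ge k_0$. For $v\in\mathcal{D}(\Omega)$ and $t$ small, the function $u^k+tv$ lies in $V^{\epsilon,k}$ wherever $u^k$ stays away from both obstacles. The upper obstacle is never active since $u^k\le M<k$. The lower obstacle is not active either, because $u^k\ge0>-\epsilon$; this is exactly where $\epsilon>0$ is needed. Thus $u^k+tv\in V^{\epsilon,k}$ for $|t|\le \min(\epsilon,k-M)/\|v\|_{0,\infty,\Omega}$, with either sign of $t$. Testing the inequality with both signs turns it into the equality \eqref{eq:weak_u_coupled}, first for $v\in\mathcal{D}(\Omega)$ and then by density for all $v\in H^1_0(\Omega)$.

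Hence $(H^k,u^k)$ solves \eqref{eq:weak_existence_form} for every $k\ge k_0$. The $k$-limit is attained trivially: take $(H,u)=(H^{k_0},u^{k_0})$. Alternatively, if a genuine limit is wanted, extract a subsequence using the uniform bounds, and pass to the limit as in Stage~1 using compactness. Combining the strong convergences of the two stages yields the claim.
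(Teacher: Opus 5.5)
Your proposal is correct and follows essentially the paper's proof. The chain is the same: compactness in $h$ for fixed $k$, then strong convergence of $H_h$ by the Galerkin argument (which, as you observe, never needs the $L^4$ bound on $\nabla H_h$), then Lemma~\ref{lem:lem-non-physical} to reach the double-obstacle inequality, and finally Lemma~\ref{conj:conjecture} for a $k$- and $\epsilon$-independent $L^\infty$ bound, so that $u^k\pm tv\in V^{\epsilon,k}$ turns the inequality into the equation once $k$ exceeds that bound (your $k>M$ plays the role of the paper's $\tilde{k}\ge 2\tilde{C}$). One small imprecision: if solutions are not unique, $(H^k,u^k)$ may change with $k$, so taking $(H,u)=(H^{k_0},u^{k_0})$ is not literally a limit in $k$. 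Your compactness alternative fixes this, and it says slightly more than the paper, which stops at showing that the $h$-limit solves \eqref{eq:weak_existence_form} for every sufficiently large $k$.
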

\begin{proof}
Fix $k>0$. Taking $Q_h=H_h$ as test function in \eqref{eq:magn_fem_BPM_coupled} and $v_h=u_h^{(\epsilon,k)+}$ in \eqref{eq:heat_fem_BPM_coupled},
using \eqref{Eq:uni-bound-Mh} and \eqref{Eq:unif-bound-uhk} we see that 
\begin{equation}
	\|H_h\|_{1, \Omega, \C} \leq C_H \qquad \text{ and } \qquad |u_h^{(\epsilon,k)+}|_{1, \Omega} \leq C_1\sqrt{k}\,,
\end{equation}
where $C_H>0$ depends only on $C_4, H_\circ$, and the Poincar\'e constant, for all $h>0$. 
Therefore, for a fixed $k$, there exist subsequences, still denoted by $\{H_{h}\}_{h>0}$ and $\{u_{h}^{\epsilon,k}\}_{h>0}$, such that
\begin{align}
	\tilde{H}_{h} \rightharpoonup\hat{H}^{\epsilon,k} \quad \text{ weakly in } H_0^1(\Omega, \C), &\qquad u_h^{(\epsilon,k)+} \rightharpoonup\hat{u}^{\epsilon,k} \quad \text{ weakly in } H^1_0(\Omega),\\
	\tilde{H}_{h} \rightarrow\hat{H}^{\epsilon,k} \quad \text{strongly in } L^2(\Omega, \C), &\qquad 
	u_h^{(\epsilon,k)+} \rightarrow\hat{u}^{\epsilon,k} \quad \text{strongly in } L^2(\Omega).
\end{align}

\vspace{.15cm}
\noindent\underline{Identification of the limit for \eqref{eq:magn_fem_BPM_coupled} :}
Taking the limit in each term of \eqref{eq:magn_fem_BPM_coupled} is identical to what has been done in the proof of Theorem~\ref{Theo:Conv-Galerkin}.  We see then that $(\hat{H}^{\epsilon,k},\hat{u}^{\epsilon,k})$ satisfies the equation
\begin{equation}\label{limit-H-equation}
	(\nabla \hat{H}^{\epsilon,k}, \nabla Q)_{\Omega, \C} +  (i\omega\mu(\hat{u}^{\epsilon,k}) 
	\hat{H}^{\epsilon,k}, Q)_{\Omega, \C} = (i\omega \mu(\hat{u}^{\epsilon,k})H_\circ, Q)_{\Omega, \C}\qquad\forall\, Q\in H^1_0(\Omega;\C).
\end{equation}

In addition, once again, following exactly the same steps used in the proof of  Theorem~\ref{Theo:Conv-Galerkin} we can prove that
$\tilde{H}_h\to \hat{H}^{\epsilon,k}$ strongly in $H^1_0(\Omega;\C)$.  As a consequence, we have then that $|\nabla H_h|^2$ converges to $|\nabla \hat{H}^{\epsilon,k}|^2$
strongly in $L^1(\Omega)$. 

One final fact that needs to be noticed is that, regardless of $\hat{u}^{\epsilon,k}$, since $\hat{H}^{\epsilon,k}$ solves \eqref{limit-H-equation},  then estimate \eqref{eq:JKWm1p} shows that we have the following bound
\begin{equation}
\|\hat{H}^{\epsilon,k} \|_{1,4,\Omega,\C}\le C_2\,,
\end{equation}
where $C_2$ is the constant from \eqref{Eq:Bound-H-W14} which, fundamentally,  \emph{is independent of $k$}. 

\vspace{.15cm}
\noindent\underline{Identification of the limit for \eqref{eq:heat_fem_BPM_coupled} :}
Since  $|\nabla H_h|^2$ converges to $|\nabla \hat{H}^{\epsilon,k}|^2$
strongly in $L^1(\Omega)$, and we have that $\hat{H}^{\epsilon,k}\in W^{1,4}(\Omega;\C)$,  then 
rewriting \eqref{eq:heat_fem_BPM_coupled} equivalently as a variational inequality, we can
 pass to the limit $h \to 0$ in it, and invoke Lemma~\ref{lem:lem-non-physical} to conclude that $(\hat{H}^{\epsilon,k},\hat{u}^{\epsilon,k})$ satisfies
\begin{equation}{\label{eq:BPM_inequality}}
	(\nabla \hat{u}^{\epsilon,k}, \nabla (v - \hat{u}^{\epsilon,k}))_{\Omega} \geq (|\nabla \hat{H}^{\epsilon,k}|^2, v - \hat{u}^{\epsilon,k})_{\Omega}\,,
\end{equation}
for all $v \in V^{\epsilon,k}$. In addition, in Lemma~\ref{lem:lem-non-physical}, it is also proven that $u_h^{(\epsilon,k)+}\to \hat{u}^{\epsilon,k}$ strongly in $H^1_0(\Omega)$.

\vspace{.15cm}
\noindent\underline{Recovering an equality from \eqref{eq:BPM_inequality} for any $\epsilon>0$ and $k$ large enough:} So far, we have proven that $(\hat{H}^{\epsilon,k},\hat{u}^{\epsilon,k})$
solves \eqref{limit-H-equation}, \eqref{eq:BPM_inequality}. The last step of the proof is to show that, for $k$ large enough, \eqref{eq:BPM_inequality} implies \eqref{eq:heat_fem_BPM_coupled}, thus proving that, for any $\epsilon>0$ and $k$
large enough, $(\hat{H}^{\epsilon,k},\hat{u}^{\epsilon,k})$ in fact solves \eqref{eq:weak_existence_form}.

We start by noticing that, as a consequence of Lemma~\ref{conj:conjecture}, and thanks to the Sobolev embedding theorem we have the following uniform bound for $\hat{u}^{\epsilon,k}$:
\begin{equation}\label{uniform-bound-inequality}
	\hat{u}^{\epsilon,k}\ge 0\;\textrm{a.e. in}\; \Omega\quad\textrm{and}\quad
	\|\hat{u}^{\epsilon,k}\|_{0, \infty, \Omega}\leq C\, \|\hat{u}^{\epsilon,k}\|_{1, 4, \Omega}\le C\| \nabla \hat{H}^{\epsilon,k}\|_{0, 4, \Omega, \C}^{2} \leq \tilde{C},
\end{equation}
where $\tilde{C}$ is \emph{independent of $\epsilon$ and $k$}.

Now let $\tilde{k} \geq 2\tilde{C}$, and  let us consider $\hat{u}^{\epsilon,\tilde{k}}$, which  thanks to \eqref{uniform-bound-inequality} also belongs to
$V^{0,\tilde{C}}$. Next, let $w \in \mathcal{D}(\Omega)$ and let
\begin{equation}
\tilde{v} \coloneqq \hat{u}^{\epsilon,\tilde{k}} + \delta w\quad\textrm{with}\quad	\delta = \min \left\{\frac{\tilde{C}}{2\|w\|_{0, \infty, \Omega}}, \frac{\epsilon}{2 \|w\|_{0, \infty, \Omega}}\right\}>0.
\end{equation}
We notice that since, $\hat{u}^{\epsilon,\tilde{k}} \ge 0$, we have that
\begin{equation}
	\tilde{v} = \hat{u}^{\epsilon,\tilde{k}} + \delta w \geq \delta w \geq -\frac{\epsilon}{2\|w\|_{0, \infty, \Omega}}\|w\|_{0,\infty,\Omega} \geq -\epsilon,
\end{equation}
and also using that $\hat{u}^{\epsilon,\tilde{k}}\in V^{0,\tilde{C}}$,
\begin{equation}
	\tilde{v} = \hat{u}^{\epsilon,\tilde{k}} + \delta w \leq \tilde{C}+ \delta \|w\|_{0, \infty, \Omega} \leq \tilde{C} + \frac{\tilde{C}}{2 \|w\|_{0, \infty, \Omega}} \|w\|_{0, \infty, \Omega}  \leq 2 \tilde{C}  \leq \tilde{k},\qquad\textrm{a.e. in}\;\Omega\,.
\end{equation}
Hence, $\tilde{v} \in V^{\epsilon,\tilde{k}}$ and so it is an acceptable test function to be used in \eqref{eq:BPM_inequality} to get
\begin{equation*}
    (\nabla \hat{u}^{\epsilon,\tilde{k}}, \nabla (\tilde{v} - \hat{u}^{\epsilon,\tilde{k}}))_{\Omega} \geq (|\nabla \hat{H}^{\epsilon,\tilde{k}}|^2,  \tilde{v} - \hat{u}^{\epsilon,\tilde{k}})_{\Omega} \,,
    \end{equation*}
which implies that $ \hat{u}^{\epsilon,\tilde{k}}$ satisfies
 \begin{equation}
   (\nabla \hat{u}^{\epsilon,\tilde{k}}, \nabla w)_{\Omega}  \geq  (|\nabla \hat{H}^{\epsilon,\tilde{k}}|^2, w)_{\Omega}, \label{ge-in-ineq}
\end{equation}
for every $w \in \mathcal{D}(\Omega)$.

 Using an analogous argument we can see that $\tilde{z} \coloneqq \hat{u}^{\epsilon,\tilde{k}} - \delta w \in V^{\epsilon,\tilde{k}}$, and
  using it as test function in \eqref{eq:BPM_inequality} and proceeding as before we get to
 \begin{equation}\label{le-in-ineq}
  (\nabla \hat{u}^{\epsilon,\tilde{k}}, \nabla w)_{\Omega}  \leq  (|\nabla \hat{H}^{\epsilon,\tilde{k}}|^2, w)_{\Omega}\qquad\forall\, w\in \mathcal{D}(\Omega)\,.
 \end{equation}

Summarising, from \eqref{ge-in-ineq} and \eqref{le-in-ineq} we conclude that  $\hat{u}^{\epsilon,\tilde{k}}$ satisfies
\begin{equation}
	(\nabla \hat{u}^{\epsilon,\tilde{k}}, \nabla w)_{\Omega} =  (|\nabla \hat{H}^{\epsilon,\tilde{k}}|^2, w)_{\Omega} \qquad \forall \ w \in H^1_0(\Omega),
\end{equation}
which is, in fact, \eqref{eq:heat_fem_BPM_coupled}.  As a conclusion, for any $\epsilon>0$ and $\tilde{k}\ge 2\tilde{C}$,  $(H_h^{\epsilon,\tilde{k}},u_h^{\epsilon,\tilde{k}})$ converges to
$(H^{\epsilon,\tilde{k}},\hat{u}^{\epsilon,\tilde{k}})$, which is a solution of  \eqref{eq:weak_existence_form}.
\end{proof}

%

\section{Numerical Experiments}\label{sec:numerics}

In this section we report the results of a selection of numerical experiments that showcase orders of convergence for the BPM for certain problems. We illustrate the main result found in 
Lemma \ref{lem:lem-non-physical}, and show that when truncating at a non-physical height, the BPM converges to the $H^1_0(\Omega)$ projection of the unique solution into the closed convex set $V^{\epsilon, k}$. We also show that the BPM is more accurate than the standard Galerkin method when resolving boundary layers on coarse and/or non-Delaunay meshes.  This is particularly
relevant to induction heating as  the magnetic field develops a boundary layer, known as the skin effect (see, e.g., \cite[Chap.~8]{Touzani-Rappaz-2014}), that also generates a very sharp thermal boundary layer, and anisotropic meshes are advantegeous in such situations.
We then continue by reporting the numerical results in two test cases in induction heating. The first one is an academic case
that fits exactly the setup of the problem analysed in this work, while the last case is a time-dependent case closer to the one that can be found in realistic applications.

We start by describing the implementation of the BPM for the scalar problem: find $z_h\in V_h$ such that
\begin{equation}\label{Gal-scalar}
(\nabla z_h, \nabla v_h)_\Omega = (f,v_h)_\Omega\qquad\forall\, v_h\in V_h\,,
\end{equation}
where $f\in L^2(\Omega)$ is a given datum. 
We implement the BPM by using the following Richardson iteration:  let $z_h^0$ solve \eqref{Gal-scalar}, 
let the damping parameter $\omega_r \in (0, 1]$, then iteratively solve 
\begin{equation}\label{eq:richardson}
	(\nabla z_h^{n+1}, \nabla v_h)_{\Omega} = (\nabla (z_h^n)^{(\epsilon, k)+}, \nabla v_h)_{\Omega}  +\omega_r(( f, v_h)_{\Omega} - (\nabla (z_h^n)^{(\epsilon, k)+}, \nabla v_h)_{ \Omega} - s((z_h^n)^{(\epsilon, k)-}, v_h)),
\end{equation}
for all $v_h \in V_h$. The algorithm is terminated when $\|z_h^{n+1} - z_h^n \|_{0, \Omega} \leq 10^{-12}$, and  $z_h^{(\epsilon, k)+}:= (z_h^{n+1})^{(\epsilon, k)+}$ is the resulting numerical solution. 

The BPM has been implemented using \texttt{Python} code with the software package \texttt{FEniCSx v.0.10.0} \cite{FEniCSx}. The resulting linear systems are solved using an LU solver within the \texttt{petsc} module of \texttt{FEniCSx}. In all tests we take $\alpha = 1$, $\omega_r = 0.5$, and $\epsilon = 10^{-12}$ unless otherwise stated. The full code for the BPM and these experiments can be found in the GitHub repository \cite{MacKenzie-2025}.

%

\subsection{Convergence when Truncating at a Non-Physical Bound}
Let $\Omega=(0,1)^2$ and consider the problem
\begin{equation}\label{eq:chopped_strong_form}
	\begin{aligned}
		-\Delta z &= f \quad \text{ in } \Omega, \\
		z&=0 \quad \text{ on } \partial \Omega.
	\end{aligned} 
\end{equation}
The function $z(x,y) = \sin(\pi x)\sin(\pi y)$ is the exact solution for \eqref{eq:chopped_strong_form} when $f(x,y) = 2\pi^2\sin(\pi x)\sin(\pi y)$. Additionally, $z(x,y) \in (0,1]$ for all $(x,y) \in \Omega$. 
For this experiment we consider $k = 1/2$ and show that the numerical solution $z_h^{(\epsilon, k)+}$ converges to the best approximation of $z$ (in the $H^1_0(\Omega)$-norm) in the subset 
\begin{equation*}
	V^{\epsilon, 1/2} := \left\{v \in H^1_0(\Omega) \ : \ -\epsilon\le v(x) \leq 1/2\; \textrm{a.e. in}\;\Omega \right\}.
\end{equation*}
Since this best approximation is not computable analytically, we compare the results to a numerical solution in an extremely refined  mesh containing approximately
$ 1.8 \times 10^7$ $\mathbb{P}_1$ elements. A visualisation of the numerical solution $z_h^{(\epsilon, k)+}$ for $h=0.00625$, and corresponding convergence results over a sequence of uniformly refined meshes are shown in Figure \ref{fig:BPM_cut_paraview} and \ref{fig:BPM_cut_convergence} respectively.  The results support the findings in Lemma \ref{lem:lem-non-physical}, which states that $z_h^{(\epsilon, k)+}$ is the best approximation of $z$ in the set $V_h^{\epsilon, 1/2}$, and as $h \rightarrow 0$,  $z_h^{(\epsilon,k)+}$ converges to the projection of $z$ onto $V^{\epsilon, 1/2}$. Table~\ref{tab:chopped_k} contains these results and we report the number of iterations for \eqref{eq:richardson} to converge. We observe that the algorithm presents a robust behaviour with respect to the iteration numbers.

\begin{figure}[h!]
	\centering
	\begin{subfigure}[t]{0.45\textwidth}
		\centering
		\includegraphics[width=\textwidth]{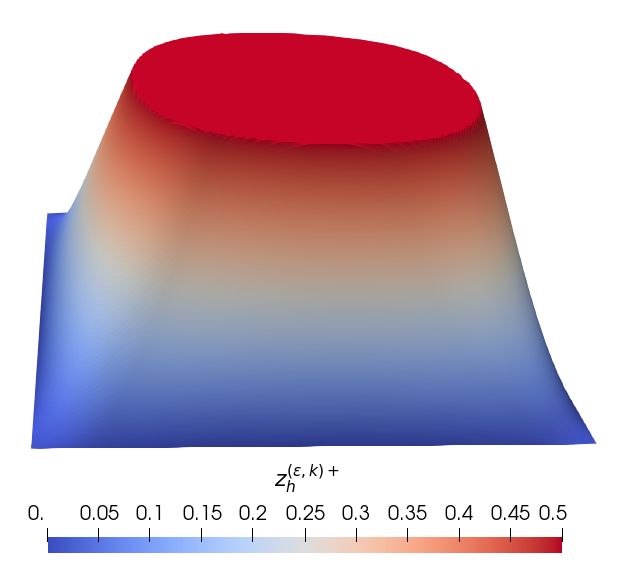}
		\caption{Plot of $z_h^{(\epsilon,k)+}$ over a uniform mesh with $h=0.00625$.}
		\label{fig:BPM_cut_paraview}
	\end{subfigure}
	\hfill
	\begin{subfigure}[t]{0.45\textwidth}
		\centering
		\includegraphics[width=\textwidth]{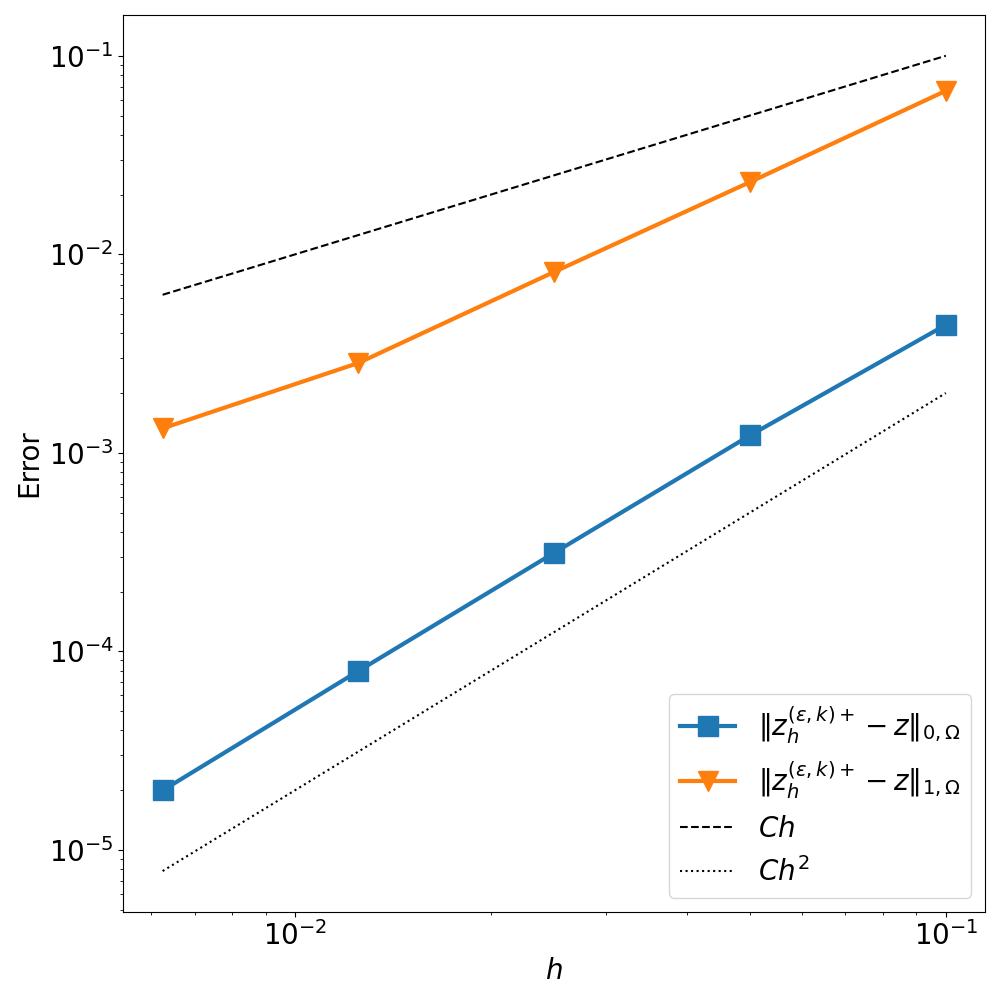}
		\caption{Convergence in the $L^2$ and $H^1$ norms. }
		\label{fig:BPM_cut_convergence}     \end{subfigure}
	\caption{The numerical solution $z_h^{(\epsilon,k)+}$ over a sequence of increasingly refined uniform meshes.}
	\label{fig:cut_graphs}
\end{figure}

\begin{table}[h!]
	\centering
\begin{tabular}{llllll}
			\toprule
			 $h$ size & N. elems & $L^2$ error & $H^1$ error & iterations \\
			\midrule
			 0.10000 & 200 & 0.004408 & 0.066598 & 7 \\
			 0.05000 & 800 & 0.001224 & 0.023110 & 7 \\
			 0.02500 & 3200 & 0.000313 & 0.008151 & 24 \\
			 0.01250 & 12800 & 0.000080 & 0.002834 & 26 \\
			 0.00625 & 51200 & 0.000020 & 0.001327  & 26 \\
			\bottomrule
	\end{tabular}
	\caption{Numerical results when solving for $z_h^{(\epsilon,k)+}$ using \eqref{eq:richardson}.}
	\label{tab:chopped_k}
\end{table}

%

\subsection{Convergence on a Non-Delaunay Mesh for Solutions with a Boundary Layer}\label{sec:theoretical_nondelaunay_test}

For this experiment we also consider a scalar equation posed over $\Omega=(0,1)^2$. We choose our exact solution to be $z_{ex}(x,y) = e^{-dx} + e^{-dy}$ where $d > 0$ (see Figure \ref{fig:Exponential paraview}). 
The function $z_{ex}$ solves the following boundary-value problem:
	\begin{align}\label{eq:nonDelaunay_problem}
		-\Delta z &= -d^2(e^{-dx} + e^{-dy}) &&\text{ in } \Omega, \\
		z&=z_{ex}& & \text{ on } \partial \Omega. \nonumber
	\end{align}
For $d$ large,  $z_{ex}$ presents a boundary layer (see Figure~\ref{fig:Exponential paraview} for the depiction of $z_{ex}$ when $d=20$).  This is a similar behaviour to that of
	the temperature distribution for high-frequency problems, where the skin effect creates a sharp boundary layer both in the magnetic field and temperature (see, e.g., \cite[Chap.~8]{Touzani-Rappaz-2014}).

\begin{figure}[h!]
	\centering
	\includegraphics[width=0.5\linewidth]{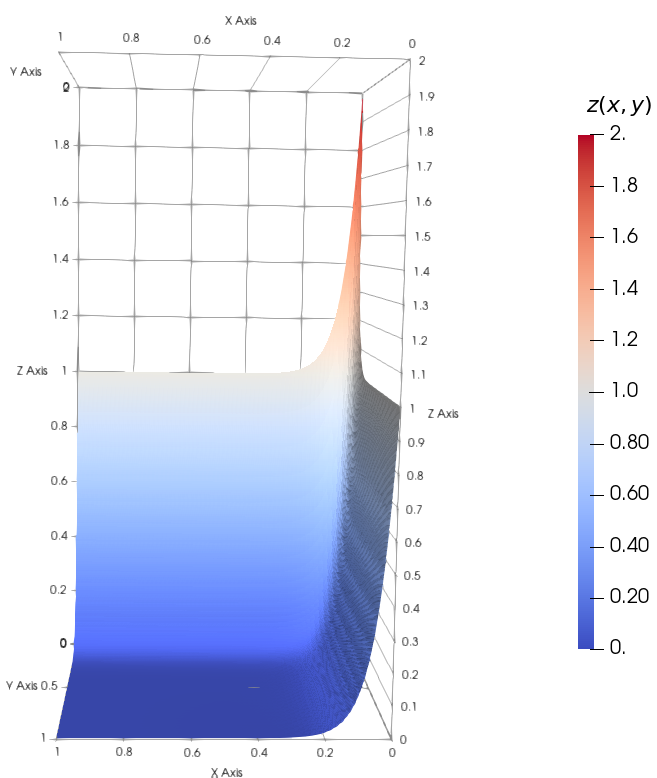}
	\caption{The function $z(x,y) = e^{-dx} + e^{-dy}$ for $d = 20$.}
	\label{fig:Exponential paraview}
\end{figure}

We approximate this problem using the BPM with the Richardson iteration \eqref{eq:richardson}.  Since,  beyond shape regularity, the BPM has no restrictions on the mesh, we use
a sequence of non-Delaunay meshes that are designed to have severely obtuse angles.  It is a well-known fact that in such case the discrete maximum principle is
not guaranteed, and, in fact it is possible to build examples where it fails, even for a Poisson problem (see, e.g., \cite{Barrenechea-John-Knobloch-2025}). The meshes depicted in Figure~\ref{fig:nondelaunay_sequence} were generated using \texttt{GMSH} \cite{GMSH} and then manually edited. 

\begin{figure}[h!]
	\centering
	\begin{subfigure}[b]{0.3\textwidth}
		\centering
		\includegraphics[width=\textwidth]{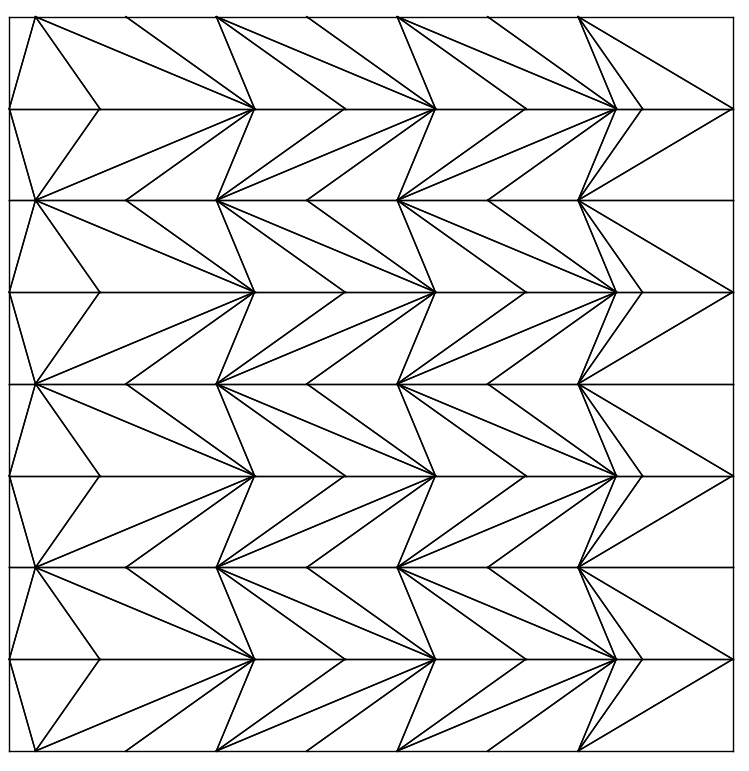}
		\caption{Iteration 1: mesh with $n=9$ nodes on the boundary.}
		\label{fig:nonDelaunay_9}
	\end{subfigure}
	\hfill
	\begin{subfigure}[b]{0.3\textwidth}
		\centering
		\includegraphics[width=\textwidth]{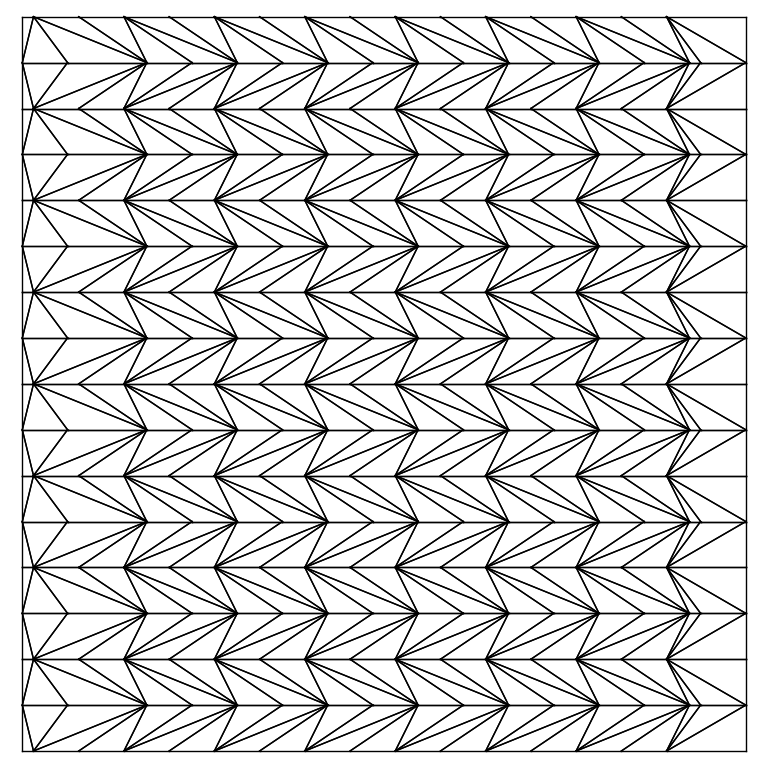}
		\caption{Iteration 2: mesh with $n=17$ nodes on the boundary.}
		\label{fig:nonDelaunay_17}
	\end{subfigure}
	\hfill
	\begin{subfigure}[b]{0.3\textwidth}
		\centering
		\includegraphics[width=\textwidth]{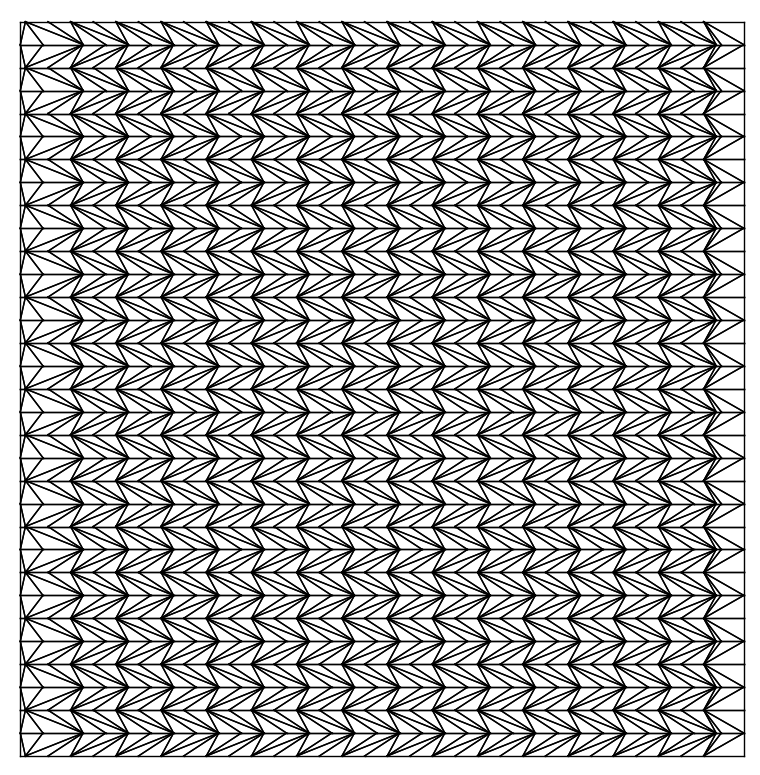}
		\caption{Iteration 3: mesh with $n=33$ nodes on the boundary.}
		\label{fig:nonDelaunay_33}
	\end{subfigure}
	\caption{A sequence of increasingly refined non-Delaunay meshes. Here, $n$ denotes the number of nodes on each boundary.}
	\label{fig:nondelaunay_sequence}
\end{figure}

We compare convergence rates of the BPM and the standard Galerkin solution given by solving \eqref{Gal-scalar} over the sequence of meshes depicted in Figure~\ref{fig:nondelaunay_sequence}. 
For the BPM we use $k=2$ and denote the solution by $z_h^{(\epsilon, k)+}$. The results are depicted in Figure \ref{fig:nonDelaunay_sequence} and show the convergence rates for different values of $d$.

From the depicted results, we can see that $z_h^{(\epsilon, k)+}$ is always more accurate than the standard Galerkin solution in all cases.  The advantage of the BPM is most clearly seen when the mesh is coarse, or the boundary layer is more prominent. In the most extreme case, with $d = 40$ on the coarsest grid, $z_h^{(\epsilon, k)+}$ is significantly more accurate than the Galerkin solution, which in the figures is denoted by $z_h^{GAL}$. The reason for this is illustrated in Figure \ref{fig:boundary_layer_galerkin_and_BPM_comparison}. Since the mesh does not guarantee that the discrete maximum principle holds for the Galerkin solution,  $z_h^{GAL}$ presents significant  undershoots. These undershoots are most significant on the coarsest mesh, where the numerical solution obtains a minimum of -3.3, and a maximum of 2, while the exact solution is  strictly positive. This illustrates a significant advantage of the BPM over the Galerkin solution.

\begin{figure}[h!]
	\centering
	\begin{subfigure}[b]{0.3\textwidth}
		\centering
		\includegraphics[width=\textwidth]{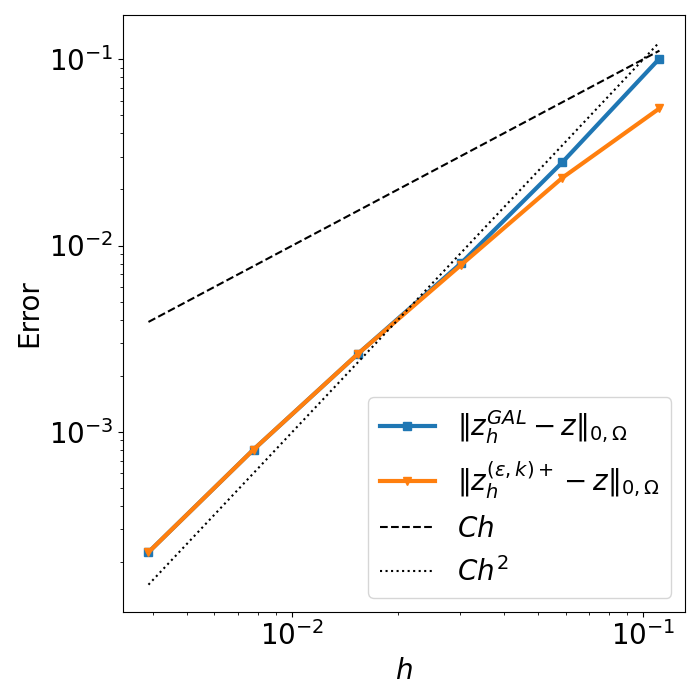}
		\caption{$d=10$.}
		\label{fig:nonDelaunay_exp_10}
	\end{subfigure}
	\hfill
	\begin{subfigure}[b]{0.3\textwidth}
		\centering
		\includegraphics[width=\textwidth]{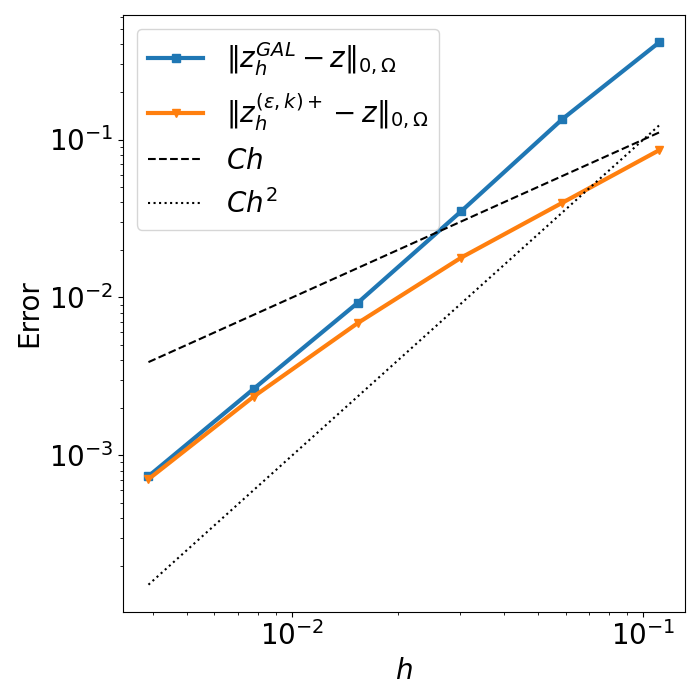}
		\caption{$d=20$.}
		\label{fig:nonDelaunay_exp_20}
	\end{subfigure}
	\hfill
	\begin{subfigure}[b]{0.3\textwidth}
		\centering
		\includegraphics[width=\textwidth]{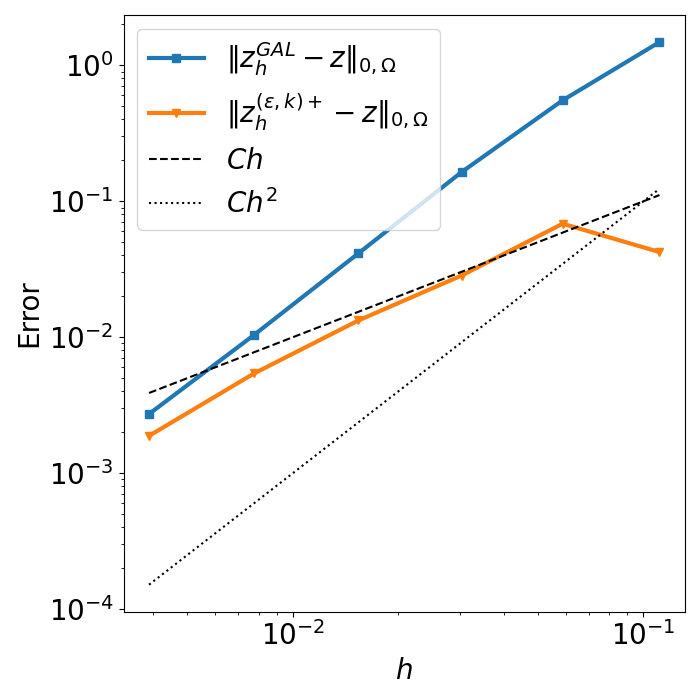}
		\caption{$d=40$.}
		\label{fig:nonDelaunay_exp_40}
	\end{subfigure}
	\caption{Comparison between $L^2(\Omega)$ errors of $z_h^{(\epsilon,k)+}$ and $z_h^{GAL}$ for different values of $d$ over a sequence of increasingly refined non-Delaunay meshes. }
	\label{fig:nonDelaunay_sequence}
\end{figure}

\begin{figure}[h!]
	\centering
	\begin{subfigure}[b]{0.3\textwidth}
		\centering
		\includegraphics[height=5cm]{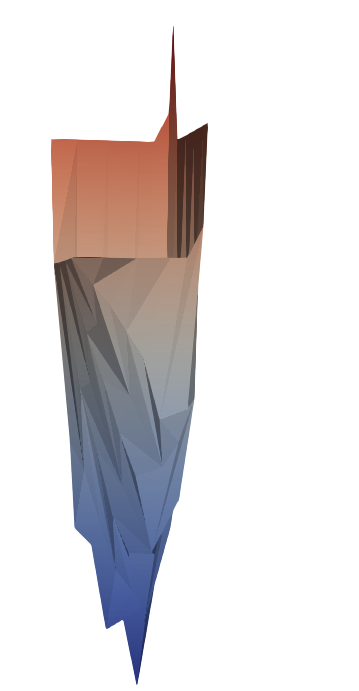}
		\caption{$z_h^{GAL}$ with $n = 9$.}
	\end{subfigure}
	\hfill
	\begin{subfigure}[b]{0.3\textwidth}
		\centering
		\includegraphics[height=5cm]{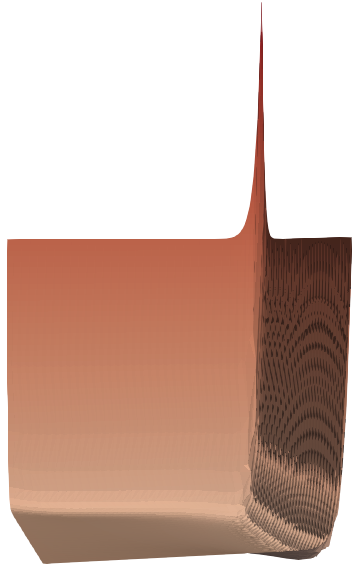}
		\caption{$z_h^{GAL}$ with $n = 65$.}
	\end{subfigure}
	\hfill
	\begin{subfigure}[b]{0.3\textwidth}
		\centering
		\includegraphics[height=5cm]{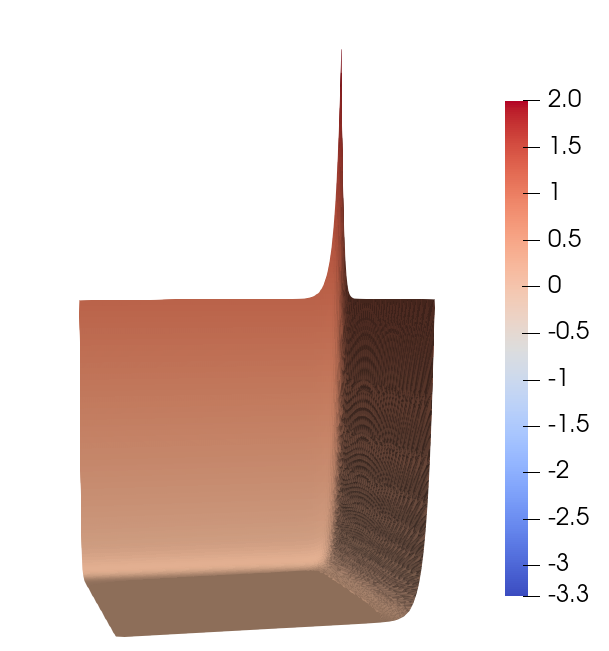}
		\caption{$z_h^{GAL}$ with $n = 257$.}
	\end{subfigure}
	
	\vspace{0.5cm}
	
	\begin{subfigure}{0.3\textwidth}
		\centering
		\includegraphics[height=5cm]{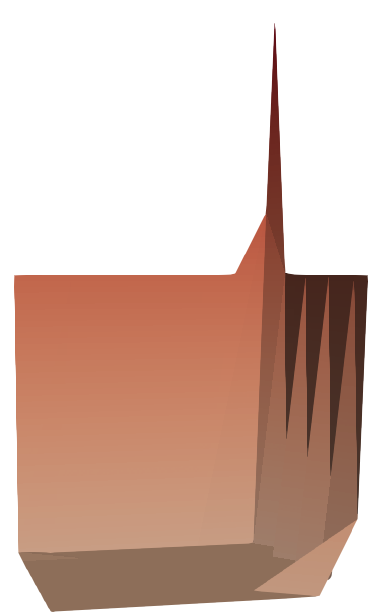}
		\caption{$z_h^{(\epsilon, k)+}$ with $n = 9$.}
	\end{subfigure}\hfill 
	\begin{subfigure}{0.3\textwidth}
		\centering
		\includegraphics[height=5cm]{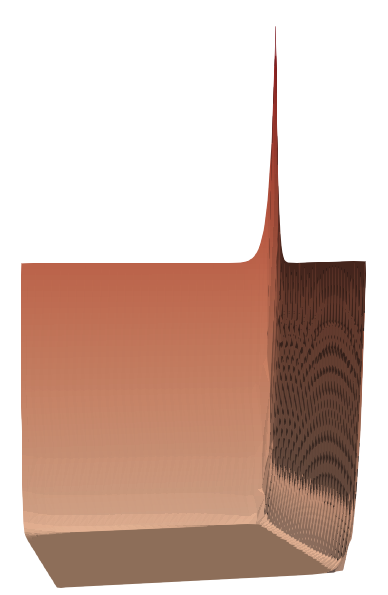}
		\caption{$z_h^{(\epsilon, k)+}$ with $n = 65$.}
	\end{subfigure}\hfill 
	\begin{subfigure}{0.3\textwidth}
		\centering
		\includegraphics[height=5cm]{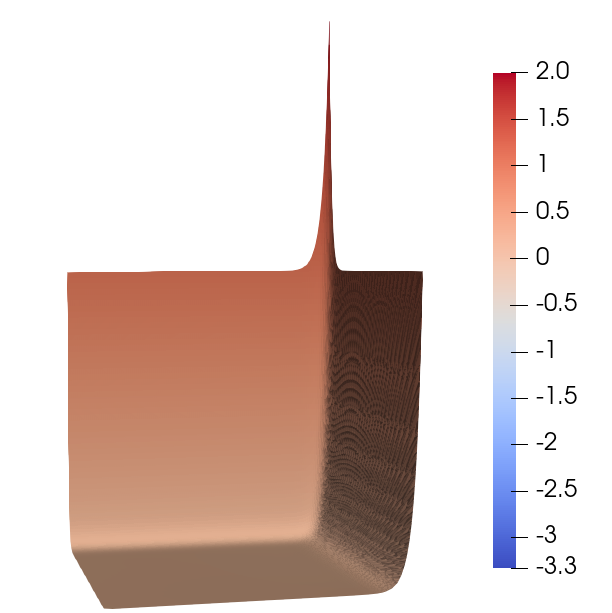}
		\caption{$z_h^{(\epsilon, k)+}$ with $n = 257$.}
	\end{subfigure}
	\caption{Visualisation of $z_h^{GAL}$ and $z_h^{(\epsilon, k)+}$ for different non-Delaunay meshes for $d=40$.}
	\label{fig:boundary_layer_galerkin_and_BPM_comparison}
\end{figure}

\subsection{Convergence of the Non-Linear Solenoidal Induction Heating Problem with a Layer}
Next, we test for convergence for the  solenoidal induction heating problem  \eqref{eq:strong_solenoidal_theory} with $\omega = 1$. We consider $\Omega=(0,1)^2$  and we choose the exact solution $(H_{ex}, u_{ex})$ and the non-linear, strictly positive function $\mu(\cdot)$ to be
\begin{itemize}
	\item $u_{ex}(x,y) = e^{-dx} + e^{-dy},$
	\item $H_{ex}(x,y) = 2 + \sin(\pi x)\sin(\pi y)$, 
	\item $\mu(s) = s^2 +1$.
\end{itemize}
Thus, $u_{ex}(x,y) \in (0,2]$ for all $(x,y) \in \Omega$, so we set $k=2$ in the BPM. This solution has a boundary layer when $d$ is large. Then, we create a manufactured solution by defining the source terms to be 
\begin{itemize}
	\item $f_H = - \Delta H_{ex} + i \omega \mu(u_{ex})H_{ex}$,
	\item $f_u = -\Delta u_{ex} - |\nabla H_{ex}|^2 $.
\end{itemize}

The nonlinear system associated with the discrete problem \eqref{eq:BPM_full_form_coupled} is solved using the fixed-point iteration given by Algorithm~\ref{alg:fixed_point}. In this experiment, we set the coupled damping parameters $\rho = 0.5$ and the Richardson damping parameter $\omega_r = 0.8$. The outer iteration is a standard fixed-point iteration, while the inner  iteration, associated with the solution of the discrete heat equation \eqref{eq:heat_fem_BPM_coupled} is performed via the Richardson method  \eqref{eq:richardson}. For the case where the plain Galerkin method \eqref{eq:discrete_full_weak_form} is used, then the 
Step~2 of Algorithm~\ref{alg:fixed_point}  is replaced by
\begin{equation*}
	(\nabla u_h, \nabla v_h)_\Omega = (|\nabla H_h^m|^2 + f_u, v_h)_\Omega \qquad \forall \ v_h \ V_h,
\end{equation*}
and Step~5 with
\begin{equation*}
	(\nabla \tilde{u}_h^m, \nabla v_h)_\Omega = (|\nabla H_h^{m+1}|^2 + f_u, v_h)_\Omega \qquad \forall \  v_h \in V_h.
\end{equation*}

\begin{algorithm}
	\caption{A fixed-point algorithm to solve the coupled solenoidal induction heating problem.}
	\label{alg:fixed_point}
	\begin{algorithmic}
		\REQUIRE {$\delta = 10^{-6}$, $M = 100$, $\rho \in (0, 1]$. }
		\STATE{Step 1: Find the initial guess $H^0_h\in V_{h,\C}$ by solving the linear system: Find $H_h^0\in W_{h, \C}$ such that $H_h^0=2$ on
		$\partial\Omega$, and :
			\begin{equation*}
				(\nabla H_h^0, \nabla Q_h)_{\Omega, \C} +  (i\omega H_h^0, Q_h)_{\Omega, \C} = (f_H, Q_h)_{\Omega, \C}, \qquad \forall \ Q_h \in V_{h,\C}.
			\end{equation*}Set $H_h^m = H_h^0$.}
		\STATE{Step 2: Find the initial guess $u^0_h \in V_h$ using the Richardson iteration to solve:
			\begin{equation*}
				(\nabla (u_h^0)^{(\epsilon,k)+}, \nabla v_h)_{\Omega} + s((u_h^0)^{(\epsilon,k)-}, v_h) = (|\nabla H_h^m|^2 + f_u ,v_h)_{\Omega} \qquad \forall\  v_h \in V_h.
			\end{equation*}
			Set $u_h^m= u_h^0$. }
		\WHILE {($\|u_h^m - u_h^{m+1}\|_{0, \Omega} > \delta$ or $\|H_h^m - H_h^{m+1}\|_{0, \Omega ,\C} > \delta$) and $m < M$}
		\STATE{Step 3: Find $H_h^m\in W_{h, \C}$ such that $H_h^m=2$ on 
		$\partial\Omega$, and :
			\begin{equation*}
				(\nabla H_h^m, \nabla Q_h)_{\Omega, \C} +  (i\omega \mu(u^{m}_h) H_h^m, Q_h)_{\Omega, \C} = (f_H, Q_h)_{\Omega, \C}, \qquad \forall \ Q_h \in V_{h,\C}.
			\end{equation*} 
		}
		\STATE{Step 4: Set $H_h^{m+1} = H_h^m + \rho (\tilde{H}_h^m - H_h^m)$.}
		\STATE{Step 5: Find $\tilde{u}_h^m\in V_h$ by using the Richardson iteration to solve:
			\begin{equation*}
				(\nabla (\tilde{u}^m_h)^{(\epsilon,k)+}, \nabla v_h)_{\Omega} + s((\tilde{u}^m_h)^{(\epsilon,k)-}, v_h) = (|\nabla H_h^{m+1}|^2 + f_u ,v_h)_{\Omega} \qquad \forall \ v_h \in V_h.
		\end{equation*} }
		\STATE{Step 6: Set $u_h^{m+1} = u_h^m + \rho \left(\tilde{u}_h^m - u_h^m\right)$. }
		\STATE{Step 7: Check the tolerance criteria, then set $u_h^m = u_h^{m+1}$ and $H_h^m = H_h^{m+1}$.} 
		\ENDWHILE
		\RETURN{($H_h^{m+1}$, $u_h^{m+1}$).}
	\end{algorithmic}
\end{algorithm}

The error curves for the Bound-Preserving Method~\eqref{eq:BPM_full_form_coupled} and the plain Galerkin scheme~\eqref{eq:discrete_full_weak_form} on the sequence
of non-Delaunay meshes illustrated in Figure \ref{fig:nondelaunay_sequence} are depicted in Figure~\ref{fig:coupled_experiment}. In there, we can observe the advantages provided
by the BPM, especially in coarser meshes.

In Table~\ref{tab:coupled_error} we report the number of iterations needed for both the plain
Galerkin and the BPM methods. We can observe these numbers are fairly mesh-independent. 

\begin{figure}[h!]
	\centering
	\includegraphics[width=\textwidth]{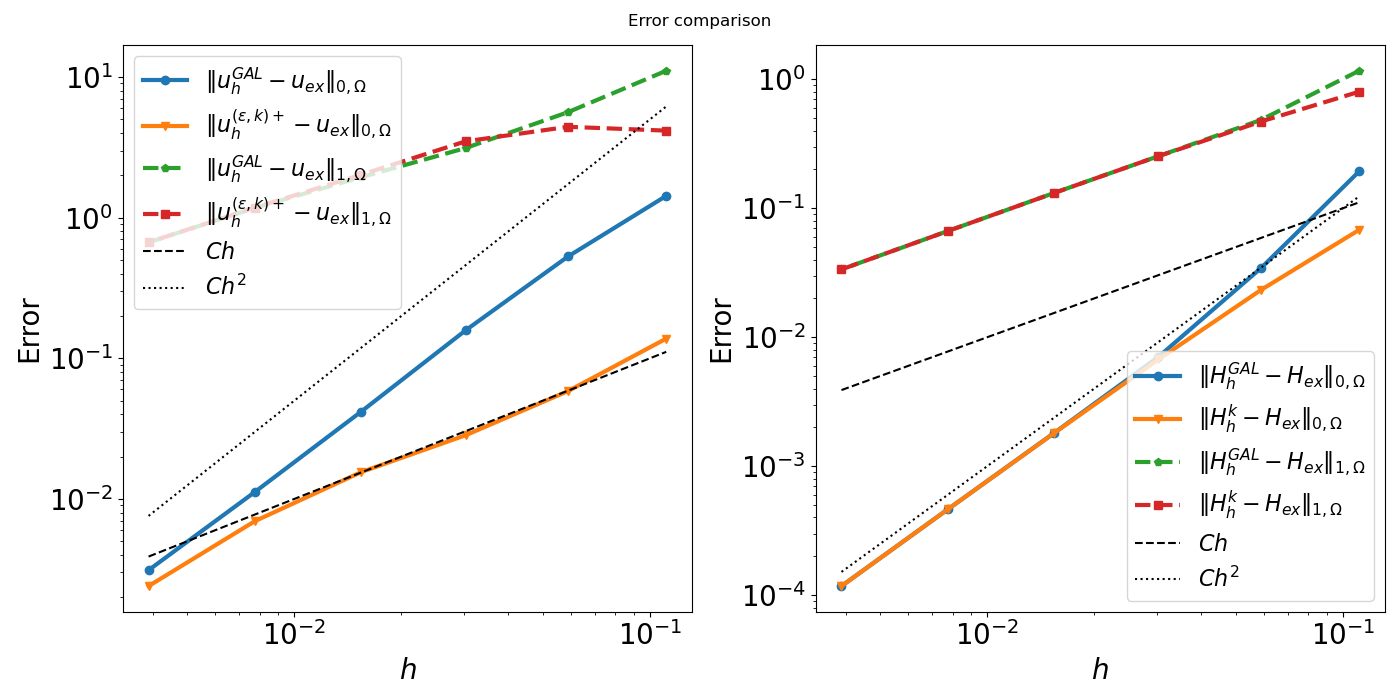}
	\caption{Convergence plot comparing the standard Galerkin method t the BPM. Both methods find the output of Algorithm \ref{alg:fixed_point} over a sequence of increasingly refined non-Delaunay meshes. The plot contains the $L^2(\Omega)$ and $H^1(\Omega)$ errors.}
	\label{fig:coupled_experiment}
\end{figure}

\begin{table}[h!]
\begin{tabular}{cccc}
	\toprule
	h size & N. elems & Gal iterations & BPM iterations \\
	\midrule
	0.111111 & 162 & 43 & 35  (9) \\
	0.058824 & 578 & 38& 33  (8) \\
	0.030303 & 2178 & 36 & 31  (8) \\
	0.015385 & 8450 & 34 & 30 (9) \\
	0.007752 & 33282 & 32 & 29  (10) \\
	0.003891 & 132098 & 30 & 28  (9) \\
	\bottomrule
\end{tabular}
\caption{Number of outer iterations for Galerkin and the BPM with $d = 40$. The number of inner Richardson iterations for the BPM is included in brackets.}
\label{tab:coupled_error}
\end{table}

\subsection{Non-Convex Domain with Non-Delaunay Anisotropic Mesh}\label{sec:non-convex_domain}
In this final section, we consider a time-dependent solenoidal model with realistic parameters. This experiment shows that the BPM works for non-Delaunay meshes on non-convex domains when applied to a real-world setting. See Figure \ref{fig:nonconvex_setup} for measurements of the domain and the corresponding mesh. We note that even though \eqref{eq:strong_solenoidal_theory} is a Dirichlet problem, the results in this section suggest that the convergence proofs in Section \ref{sec:BPM_theory} might extend to the temporal problem with radiation boundary conditions with realistic coefficients. We also show that the BPM can provide better results than the standard Galerkin method in certain scenarios when the standard Galerkin method produces spurious oscillations in the solution.
The mesh used  is an extreme case of those that would typically be used in industrial settings (as the elements'
aspect ratio is very large),  and we use it here to demonstrate the advantages of the BPM over the standard Galerkin method, and its robustness. 

\begin{figure}[h!]
	\centering      
	\includestandalone[width=0.9\textwidth]{tikz_images/non-convex_setup}
	\caption{Illustration of non-convex domain and corresponding non-Delaunay mesh with 4761 nodes and 9248 elements.  We track the temperature at Point A, positioned at (0.0090, 0.000), and Point B, positioned at (0.00342, 0.00568), where (0,0) is at the centre of the billet, and measurements are in $\qty{}{\meter}$.}
	\label{fig:nonconvex_setup}
\end{figure}

We consider the full time-dependent induction heating equations with radiation boundary conditions, similar to the ones described in \cite{Chaboudez-1994}: 

\begin{subequations}\label{eq:full_numeric_solenoidal}
	\begin{align}
		- \text{div}(\sigma^{-1}(u) \nabla H) + i \omega \mu(u) H &= 0 && \text{ in } \Omega \times (0, T], \label{eq:strong_magn_numeric}\\
		H &= H_0 &&\text{ on } \partial \Omega\times (0, T], \label{eq:strong_magn_bc_numeric} \\
		\rho(u) C_p(u) \frac{\partial u}{\partial t} -\text{div} (\kappa(u)\nabla(u)) &= \frac{1}{2\sigma(u)}|\nabla H|^2 && \text{ in } \Omega\times (0, T], \label{eq:strong_heat_numeric} \\
		\kappa(u)\frac{\partial u}{\partial n} + \alpha(|u|^3u - u_{\text{amb}}^4) + \beta(u - u_{\text{amb}}) &=0 && \text{ on } \partial \Omega\times (0, T], \\ \label{eq:strong_heat_bc_numeric}
		u(\cdot , 0)&= u_0 && \text{ in } \Omega.
	\end{align}
\end{subequations}

Here, $\sigma(\cdot)$, $\mu(\cdot)$, $\kappa(\cdot)$, $\rho(\cdot),$ and $C_p(\cdot)$ are strictly positive, bounded, and Lipschitz functions representing the electrical conductivity, magnetic permeability, thermal conductivity, material density, and specific heat capacity, respectively. The positive constants $\omega$, $u_{\text{amb}}$, $\alpha$, and $\beta$ represent the angular frequency of the current, the ambient temperature, the radiation coefficient, and the convective coefficient, respectively.  For this experiment, we use material properties for a steel that is typically used in induction heating, using data provided by the Advanced Forming Research Centre (AFRC) at the University of Strathclyde.  See Figure \ref{fig:material_properties} for the material properties where the vertical scales have been omitted for anonymity purposes. Note that near \qty{650}{\degreeCelsius}, there is a sharp change in $\mu(\cdot)$ as the billet reaches the temperature (called the Curie point) where it becomes completely demagnetised. This can sometimes cause issues with convergence with numerical schemes, and has been previously resolved by using a `smoothed' version of \cite{Chaboudez-1994}, using a predictor corrector scheme with very small time-steps \cite{Masse-Morel-Breville-1985}, or simply not extending beyond the Curie point \cite{Bay-Labbe-2003}. 
\begin{figure}[h!]
	\centering
	\includegraphics[width=\textwidth]{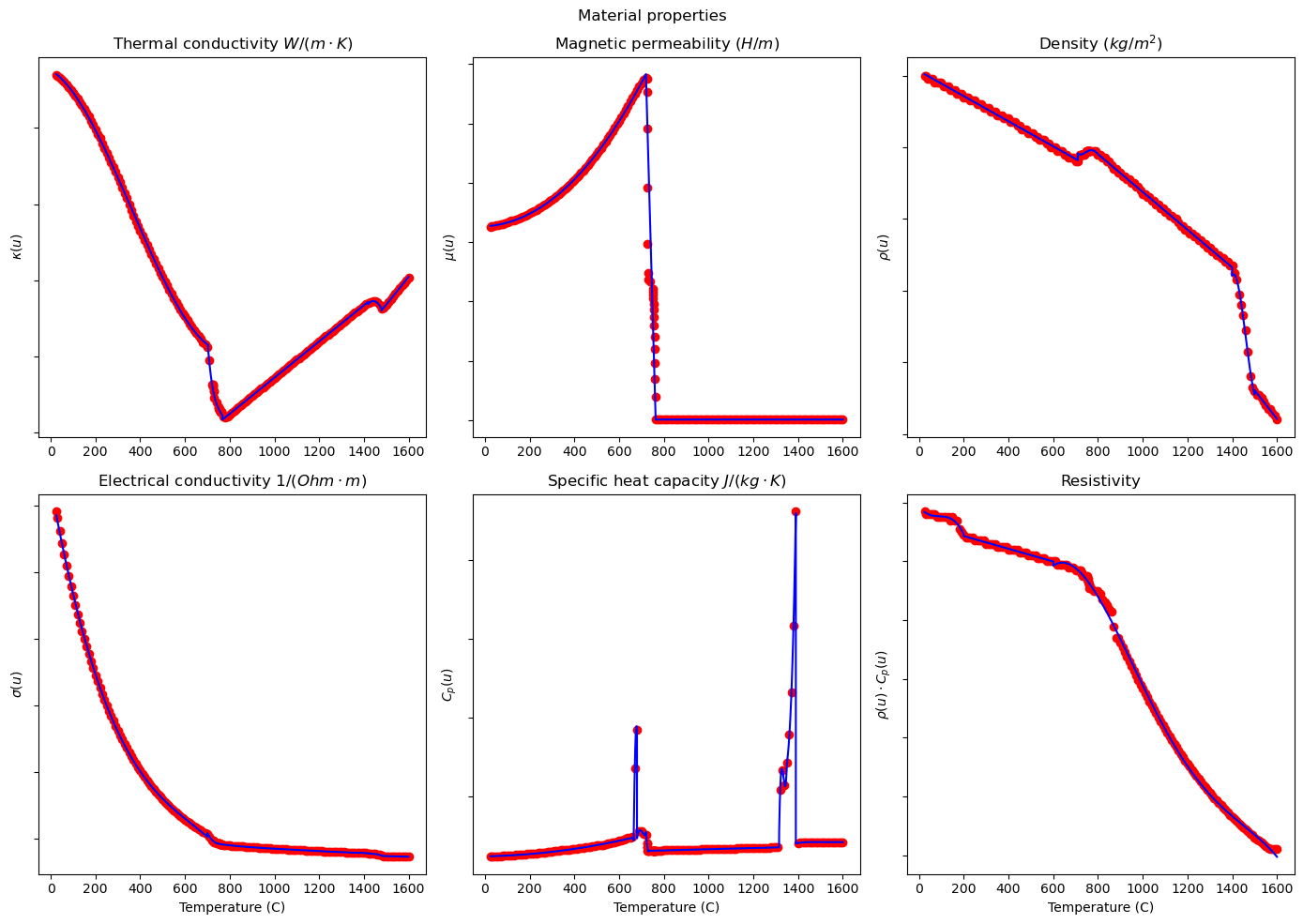}
	\caption{Material properties of a typical steel. The red dots represent the datapoints, and the blue line represents a piecewise polynomial approximation of the data.  Near \qty{650}{\degreeCelsius}, there is a sharp change in $\mu(\cdot)$ as the billet reaches the temperature (called the Curie point) where it becomes completely demagnetised.}
	\label{fig:material_properties}
\end{figure}

We set the current intensity to be \qty{417}{\ampere}, a typical intensity used for heating small workpieces, and heat the billet for 8 seconds. See Table \ref{tab:table_for_nonconvex_geometry} for all the properties used in the setup. We configure the algorithm to be as unstable as possible: we elect to use a large time-step, set a small number of maximum iterations in all fixed-point loops, and set the coupled and temperature damping parameters to be close to 1.

\begin{table}
	\centering
	\small
	\begin{tabular}{|c|c|}
		\hline  
		\textbf{Parameter} & \textbf{Value} \\
		\hline 
		Time-step $\Delta t$ & \qty{1}{\second} \\
		\hline 
		Final time $T$  &\qty{15}{\second} \\
		\hline 
		Time current active & \qty{8}{\second} \\
		\hline 
		Current $I$ &  \qty{471}{\ampere} \\
		\hline 
		Ambient temperature $u_{\text{amb}}$& $\qty{300.15}{\kelvin}$ ($\qty{25}{\degreeCelsius}$) \\
		\hline
		Convective coefficient $\beta$ & 10 \\
		\hline 
		Radiation coefficient $\alpha$ &\num{4.54e-8} \\
		\hline
	\end{tabular}
	\caption{Configuration parameters for the experiment.}\label{tab:table_for_nonconvex_geometry}
\end{table} 

\begin{figure}[h!]
	\centering
	\includegraphics[width=0.7\textwidth]{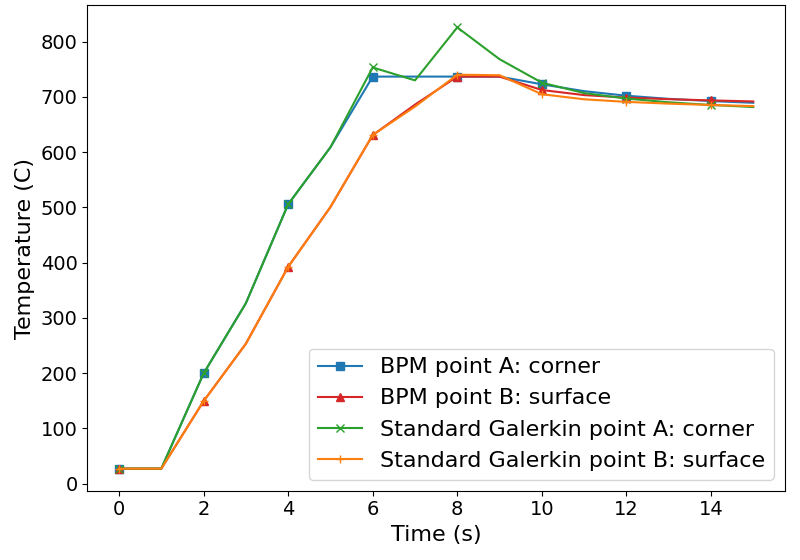}
	\caption{Results comparing the standard Galerkin method and the BPM on a non-convex domain. Here we set $k = \qty{736.85}{\celsius}$ in the BPM. }
	\label{fig:nonconvex_results}
\end{figure}

We choose control points A and B as depicted in Figure~\ref{fig:nonconvex_setup}.  The temperature at these points at each time-step are plotted in Figure \ref{fig:nonconvex_results} for both the standard Galerkin method and for the BPM. We can see that the standard Galerkin method exhibits some instability near the Curie point,  and we get some spurious oscillations at the corner of the billet. However, by choosing $k= \qty{1010}{\kelvin}$ $(\qty{736.85}{\celsius})$ in the BPM we can avoid these spurious oscillations, and obtain a much smoother and realistic solution. For the parts of the simulation where there are no numerical oscillations, the BPM and the standard Galerkin solution produce the same results. The choice of $k$ was found by running the standard Galerkin method with a more refined time-step, finding the maximum value of temperature in the simulation, and setting $k$ equal to that maximum.

\begin{figure}[h!]
	\centering
	\begin{subfigure}[t]{0.37\textwidth}
		\centering
		\includegraphics[height=5cm]{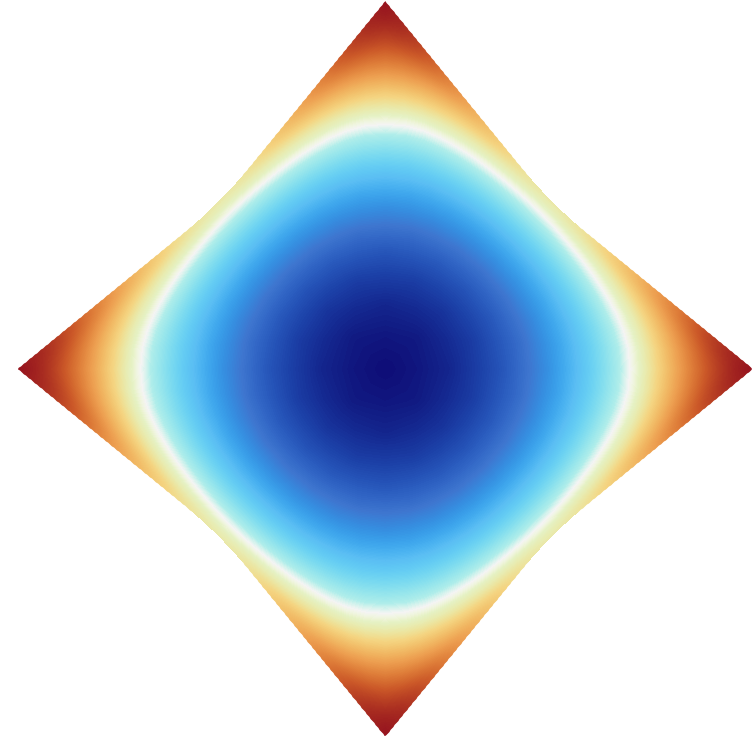}
		\caption{Galerkin solution $t = 8$.}
	\end{subfigure}
	\hfill
	\begin{subfigure}[t]{0.37\textwidth}
		\centering
		\includegraphics[height=5cm]{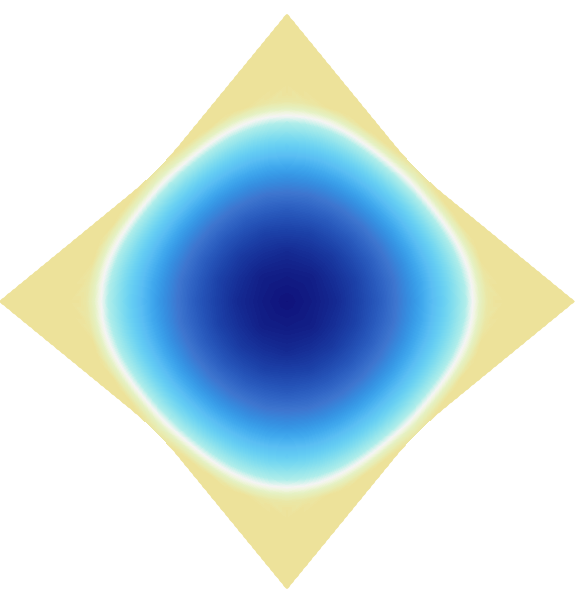}
		\caption{BPM solution at $t=8$.}
	\end{subfigure}
	\hfill
	\begin{subfigure}[t]{0.2\textwidth}
		\centering
		\includegraphics[height=5cm]{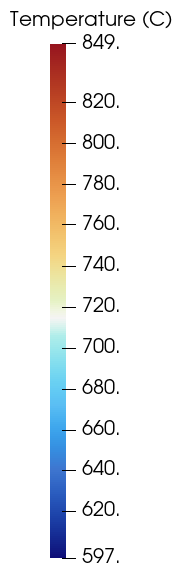}
		\caption{Temperature scale.}
	\end{subfigure}
	\caption{Comparison between the standard Galerkin method and the BPM at $t=8$.}
	\label{fig:nonconvex_paraview_t=8}
\end{figure}

\begin{figure}[h!]
	\centering
	\begin{subfigure}[t]{0.37\textwidth}
		\centering
		\includegraphics[height=5cm]{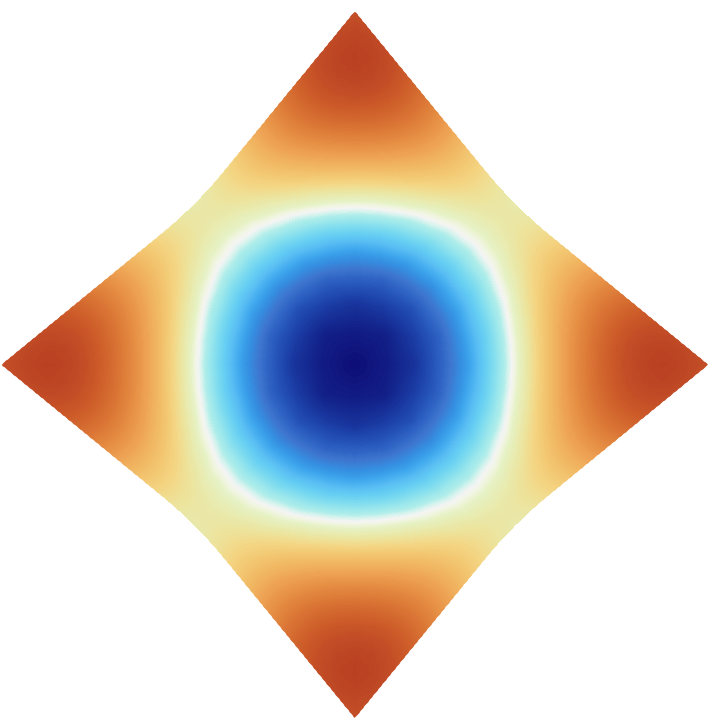}
		\caption{Galerkin solution $t = 11$.}
	\end{subfigure}
	\hfill
	\begin{subfigure}[t]{0.37\textwidth}
		\centering
		\includegraphics[height=5cm]{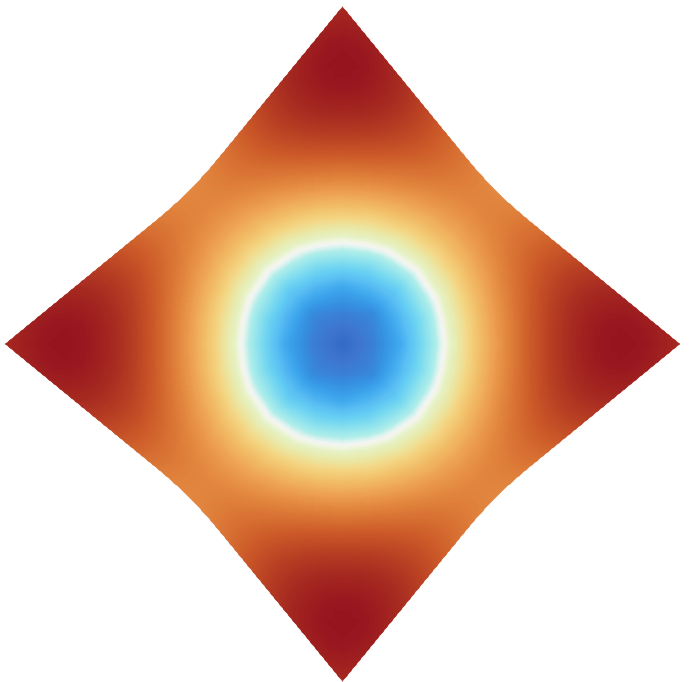}
		\caption{BPM solution at $t=11$.}
	\end{subfigure}
	\hfill
	\begin{subfigure}[t]{0.2\textwidth}
		\centering
		\includegraphics[height=5cm]{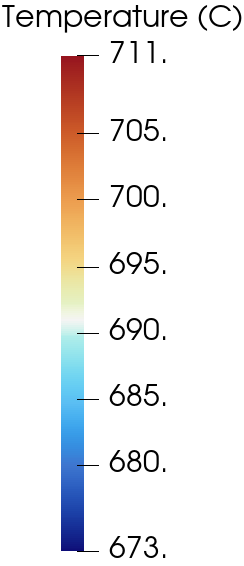}
		\caption{Temperature scale.}
	\end{subfigure}
	\caption{Comparison between the standard Galerkin method and the BPM at $t=11$.}
	\label{fig:nonconvex_paraview_t=11}
\end{figure}

Figure \ref{fig:nonconvex_paraview_t=8} illustrates the difference between the temperature distributions of the standard Galerkin solution and the BPM at the point of highest instability. We can see that the standard Galerkin solution has much higher temperatures at the corners, whereas the BPM is constrained. However, looking at Figure \ref{fig:nonconvex_paraview_t=11}, a few seconds later, after the current has been switched off, we can see that the BPM produces a temperature distribution that is higher. This suggests that the standard Galerkin solution with higher temperature at $t=8$ was indeed a numerical artefact, and is causing undershoots in the temperature distribution at later times.

We finish by commenting that although it is clear from this experiment that unwanted numerical oscillations can be suppressed by the BPM, the latter depends on the
 choice of the parameter $k$, the point where the solution is truncated. Too low a choice of $k$, and the entire solution would be suppressed and this would induce a non-physical solution. Therefore, in order to use the BPM effectively, we need to know how to make an educated choice for $k$. Nevertheless, this experiment is a proof of concept that the BPM could be used in realistic induction heating applications in cases where there are spurious oscillations due to irregular meshes.

\section{Conclusions and Discussion}\label{Sec:Concl}

We have presented the analysis and discretisation of the stationary two-dimensional solenoidal induction heating problem with Dirichlet boundary conditions on convex and non-convex polygonal domains. By proving that $H$ and $u$ were bounded in appropriate Sobolev norms, and in particular that $|\nabla H|^2$ belongs to $L^2(\Omega)$, we were able to provide a valid weak formulation for the problem at hand.  Based on this formulation, we propose first a standard Galerkin finite element method and proved that, for convex domains and under the assumption that meshes are quasiuniform,
solutions of the discrete  coupled problem, $(H_h, u_h)$, converged to $(H,u)$,
a solution of \eqref{eq:strong_solenoidal_theory}, as $h \rightarrow 0$. We then proposed a new bound-preserving finite element method and proved that,  for Lipschitz polygons and without any conditions on the mesh beyond shape regularity,  solutions to the discrete coupled system $(H_h^{\epsilon,k}, u_h^{(\epsilon,k)+})$ converge to $(H,u)$ as $h\rightarrow 0$ and $k \rightarrow \infty$, where $u_h^{(\epsilon,k)+}$ is the constrained part of the BPM solution of the heat equation. Moreover, we proved that for a fixed $k$, $u_h^{(\epsilon,k)+}$ converges to the projection of $u$ onto the closed convex subset $V^{\epsilon,k}$. These results were verified using numerical experiments on convergence when truncating at a non-physical bound, convergence on non-Delaunay meshes with boundary layers, and convergence of the coupled system,  for both the steady-state and the unsteady problem (even though the latter case lies outside the theoretical framework of this work).

These results present an extension to the current literature.  In particular, they extend results in \cite{Clain-Touzani-1997a} by allowing non-convex and polygonal domains. Convergence of FEMs for this problem is similarly limited to convex domains in the paper \cite{Parietti-Rappaz-1999} for the time-dependent problem. Therefore, by applying the BPM, and removing the requirements on the mesh and on the domain, we have presented a relaxation on the requirements for convergence for the coupled problem.

\section*{Acknowledgements}

The work of KM was funded by a EPSRC studentship at the University of Strathclyde, and Bifrangi UK. Ltd.  through the CORE funding for AFRC.  The work of AJS was funded by NSF grant DMS-2409918.
Part of this work was carried out while the authors were participating in the the Research-in-Groups programme: ``\emph{Approximation of renormalised solutions}'' at the International Centre for Mathematical Sciences, Edinburgh. ICMS support is gratefully acknowledged.  The authors also want to thank Dr. Aurik Andreu from the Advanced Forming Research Centre (AFRC), University of Strathclyde, for his help with the material properties used in the numerical experiment reported in Section 6.4.
KM and GRB wish to thank Tristan Pryer and Andreas Veeser for numerous and fruitful discussions about this work.

\appendix

\section{Proof of Lemma~\ref{conj:conjecture}}
\label{sec:RegularityObstacle}

The proof is a small variation on the well-known proofs of regularity for the solution of an obstacle problem, see \cite{Stampacchia-2000,MR1009785,MR880369} and in particular \cite[Section 3.1.14]{MR653144} for details about our penalisation approach. The only difference is that, in all references we were able to locate, the concern is with higher differentiability as well and, for this reason, the domain and problem data are assumed smoother than we do here.

We begin by recalling that $\Phi_\pm \in V^{\epsilon,k}$ and $\Phi = \Phi_+ - \Phi_-$, where
\[
  \Phi_+ \coloneqq \max\{ 0, \Phi \}, \qquad \Phi_- \coloneqq (-\Phi)_+.
\]
Thus, we may set $v = \Phi_+$ in \eqref{limit-ineq-uhatk} to get
\[
  - \| \nabla \Phi_- \|_{0,\Omega}^2 \geq (F,\Phi_-)_\Omega \geq 0,
\]
and so, $\Phi_- = 0$ almost everywhere. Meaning $\Phi = \Phi_+ \geq 0$.

Next, we sketch the proof of higher integrability of $\Phi$. As we stated above, the details are essentially the same as in the classical case. Let $\delta>0$ and let $\Phi_\delta \in H^1_0(\Omega)$ be the unique solution to
\[
  -\Delta \Phi_\delta + \frac1\delta \left( \Phi_\delta - k \right)_+ - \frac1\delta\left( \Phi_\delta + \epsilon \right)_- = F,
\]
in the weak sense. We may test the equation with $\frac1\delta \left( \Phi_\delta - k \right)_+$ to conclude
\[
  \left\| \frac1\delta \left( \Phi_\delta - k \right)_+\right\|_{0,\Omega} \leq 2 \| F \|_{L^2(\Omega)};
\]
see \cite[Theorem 3.1.10]{MR653144} for details. Similarly,
\[
  \left\| \frac1\delta \left( \Phi_\delta + \epsilon \right)_- \right\|_{0,\Omega} \leq 2 \| F \|_{L^2(\Omega)}.
\]
Define
\[
  G = F - \frac1\delta \left( \Phi_\delta - k \right)_+ + \frac1\delta\left( \Phi_\delta + \epsilon \right)_-,
\]
to see that
\[
  -\Delta \Phi_\delta = G, 
\]
and
\[
  \| G \|_{0,\Omega} \leq 5 \| F \|_{0,\Omega}.
\]
Since, by the Sobolev embedding \eqref{Ineq-Embedding} we have that 
\[
  \| F \|_{-1,p,\Omega} \leq C_{emb} \| F \|_{0,\Omega}, \qquad \forall p>1,
\]
we may then invoke \eqref{eq:JKWm1p} to conclude that
\[
  \| \nabla \Phi_\delta \|_{0,p,\Omega} \leq  5C_{emb}C_{JK}(p) \| F \|_{0,\Omega}, \qquad \forall p \in [P',P].
\]
The point of this estimate is that it is uniform in $\delta$, so we may extract a weakly convergent subsequence; which we do not relabel.

The final step is to recall that, as $\delta \to 0$, $\Phi_\delta \to \Phi$ in $H^1_0(\Omega)$. By lower semicontinuity with respect to weak convergence we arrive at
\[
  \| \nabla \Phi \|_{0,p,\Omega} \leq 5C_{emb}C_{JK}(p) \| F \|_{0,\Omega}, \qquad \forall p \in [P',P],
\]
which is the claimed higher integrability.

\bibliographystyle{plain}
\bibliography{refs}

@article{FEniCSx,
	author = {Baratta, I. A. and Dean, J. P. and Dokken, J. S. and Habera, M. and Hale, J. S. and Richardson, C. N. and Rognes, M. E. and Scroggs, M. W. and N. Sime, N. and Wells, G.N.},
	title = {DOLFINx: The next generation FEniCS problem solving environment,},
	journal = {Preprint},
	DOI = {doi.org/10.5281/zenodo.10447666},
	year = {2023},
	type = {Journal Article}
}

@article{Barrenechea-2024,
	author = {Barrenechea, G. R. and Georgoulis, E. H. and Pryer, T. and Veeser, A.},
	title = {A nodally bound-preserving finite element method},
	journal = {IMA Journal of Numerical Analysis},
	volume = {44},
	number = {4},
	pages = {2198-2219},
	year = {2024},
	type = {Journal Article}
}

@book{Barrenechea-John-Knobloch-2025,
	author = {Barrenechea, G. R. and John, V. and Knobloch, P.},
	title = {Monotone Discretizations for Elliptic Second Order Partial Differential Equations},
	publisher = {Springer Cham},
	volume = {61},
	series = {Springer Series on Computational Mathematics},
	year = {2025},
	type = {Book}
}

@article{Bay-Labbe-2003,
	author = {Bay, F. and Labbe, V. and Favennec, Y. and Chenot, J. L.},
	title = {A numerical model for induction heating processes coupling electromagnetism and thermomechanics},
	journal = {International Journal for Numerical Methods in Engineering},
	volume = {58},
	number = {6},
	pages = {839-867},
	year = {2003},
	type = {Journal Article}
}

@book{Brenner-Scott-2008,
	author = {Brenner, S. C. and Scott, L. R.},
	title = {The mathematical theory of finite element methods},
	publisher = {Springer},
	volume = {15},
	edition = {3rd},
	series = {Texts in Applied Mathematics},
	note = {3rd ed.},
	year = {2008},
	type = {Book}
}

@book {Brezis-2011,
    AUTHOR = {Brezis, H.},
     TITLE = {Functional analysis, {S}obolev spaces and partial differential
              equations},
    SERIES = {Universitext},
 PUBLISHER = {Springer, New York},
      YEAR = {2011},
     PAGES = {xiv+599},
      ISBN = {978-0-387-70913-0},
   MRCLASS = {35-01 (46-01 46E35 46N20 47F05)},
  MRNUMBER = {2759829},
MRREVIEWER = {Vicen\c tiu\ D.\ R\u adulescu},
}

@article{Chaboudez-1994,
	author = {Chaboudez, C. and Clain, S. and Glardon, R. and Rappaz, J. and Swierkosz, M. and Touzani, R.},
	title = {Numerical modelling of induction heating of long workpieces},
	journal = {IEEE Transactions on Magnetics},
	volume = {30},
	number = {6},
	pages = {5028-5037},
	year = {1994},
	type = {Journal Article}
}

@article{Clain-Touzani-1997a,
	author = {Clain, S. and Touzani, R.},
	title = {A Two-dimensional Stationary Induction Heating Problem},
	journal = {Mathematical Methods in the Applied Sciences},
	volume = {20},
	number = {9},
	pages = {759-766},
	year = {1997},
	type = {Journal Article}
}

@article{Clain-Touzani-1997b,
	author = {Clain, S. and Touzani, R.},
	title = {Solution of a two-dimensional stationary induction heating problem without boundedness of the coefficients},
	journal = {Mathematical Modelling and Numerical Analysis},
	volume = {31},
	number = {7},
	pages = {845-870},
	year = {1997},
	type = {Journal Article}
}

@book{Ern-Guermond-II-2021,
	author = {Ern, A. and Guermond, J. L.},
	title = {Finite Elements II - Galerkin Approximation, Elliptic and Mixed PDEs},
	publisher = {Springer},
	volume = {73},
	OPTedition = {1st 2021.},
	series = {Texts in Applied Mathematics},
	year = {2021},
	type = {Book}
}

@book{Ern-Guermond-I-2021,
	author = {Ern, A. and Guermond, J. L.},
	title = {Finite elements I - Approximation and interpolation},
	publisher = {Springer},
	volume = {72},
	series = {Texts in Applied Mathematics},
	year = {2021},
	type = {Book}
}

@article{GMSH,
	author = {Geuzaine, C. and Remacle, J. F.},
	title = {Gmsh: A 3-D finite element mesh generator with built-in pre- and post-processing facilities},
	journal = {International Journal for Numerical Methods in Engineering},
	volume = {79},
	number = {11},
	pages = {1309-1331},
	year = {2009},
	type = {Journal Article}
}

@article{Jensen-Malqvist-2012,
	author = {Jensen, M. and M\r{a}lqvist, A.},
	title = {Finite element convergence for the Joule heating problem with mixed boundary conditions},
	journal = {Bit Numerical Mathematics},
	volume = {53},
	number = {2},
	pages = {475-496},
	year = {2012},
	type = {Journal Article}
}

@article{Jensen-Malqvist-2022,
	author = {Jensen, M. and M\r{a}lqvist, A. and Persson, A.},
	title = {Finite element convergence for the time-dependent Joule heating problem with mixed boundary conditions},
	journal = {IMA Journal of Numerical Analysis},
	volume = {42},
	number = {1},
	pages = {199},
	year = {2022},
	type = {Journal Article}
}

@article{Jerison-Kenig-1995,
	author = {Jerison, D. and Kenig, C. E.},
	title = {The Inhomogeneous {D}irichlet Problem in {L}ipschitz Domains},
	journal = {Journal of Functional Analysis},
	volume = {130},
	number = {1},
	pages = {161-219},
	year = {1995},
	type = {Journal Article}
}

@book{Stampacchia-2000,
	author = {Kinderlehrer, David and Stampacchia, Guido},
	title = {An introduction to variational inequalities and their applications},
	publisher = {Society for Industrial and Applied Mathematics},
	volume = {31},
	series = {Classics in applied mathematics},
	year = {2000},
	type = {Book}
}

@article{Zhu-2006,
	author = {Loula, A. F. D. and Zhu, J.},
	title = {Mixed finite element analysis of a thermally nonlinear coupled problem},
	journal = {Numerical Methods for Partial Differential Equations. An International Journal},
	volume = {22},
	number = {1},
	pages = {180-196},
	year = {2006},
	type = {Journal Article}
}

@article{Zhu-2011,
title = {Mixed discontinuous {G}alerkin analysis of thermally nonlinear coupled problem},
journal = {Computer Methods in Applied Mechanics and Engineering},
volume = {200},
number = {13},
pages = {1479-1489},
year = {2011},
issn = {0045-7825},
doi = {https://doi.org/10.1016/j.cma.2010.12.009},
url = {https://www.sciencedirect.com/science/article/pii/S0045782510003543},
author = {J. Zhu and X. Yu and A.F.D. Loula},
}

@misc{MacKenzie-2025,
	author = {MacKenzie, K.},
	title = {Induction Heating},
	publisher = {GitHub},
	howpublished  ={\url{https://github.com/Katie795/induction_heating}},
	year = {2025},
	type = {Computer Program}
}

@article{Masse-Morel-Breville-1985,
	author = {Mass\'{e}, P. and Morel, B. and Breville, T.},
	title = {A finite element prediction correction scheme for magneto-thermal coupled problem during curie transition},
	journal = {IEEE Transactions on Magnetics},
	volume = {21},
	number = {5},
	pages = {1871-1873},
	year = {1985},
	type = {Journal Article}
}

@Article{Holst2010,
author={Holst, M.J. and Larson, M.G. and M{\aa}lqvist, A. and S{\"o}derlund, R.},
title={Convergence analysis of finite element approximations of the Joule heating problem in three spatial dimensions},
journal={BIT Numerical Mathematics},
year={2010},
month={Dec},
day={01},
volume={50},
number={4},
pages={781-795},
issn={1572-9125},
doi={10.1007/s10543-010-0287-z},
url={https://doi.org/10.1007/s10543-010-0287-z}
}

@article{Parietti-Rappaz-1998,
	author = {Parietti, C. and Rappaz, J.},
	title = {A quasi-static two-dimensional induction heating problem. {P}art {I}: modelling and analysis.},
	journal = {Mathematical Models and Methods in Applied Sciences},
	volume = {8},
	number = {6},
	pages = {1003-1021
	},
	year = {1998},
	type = {Journal Article}
}

@article{Parietti-Rappaz-1999,
	author = {Parietti, C. and Rappaz, J.},
	title = {A quasi-static two-dimensional induction heating problem. {P}art {II}: numerical analysis.},
	journal = {Math Models and Methods in Applied Sciences},
	volume = {9},
	number = {9  },
	pages = {1333-1350},
	year = {1999},
	type = {Journal Article}
}

@article{Rannacher-Scott-1982,
	author = {Rannacher, R. and Scott, R.},
	title = {Some optimal error-estimates for piecewise linear finite-element approximations},
	journal = {Mathematics of Computation},
	volume = {38},
	number = {158},
	pages = {437-445},
	year = {1982},
	type = {Journal Article}
}

@book{Raviart-Thomas-1992,
	author = {Raviart, P. A. and Thomas, J. M.},
	title = {Introduction \`{a} l'analyse num\'{e}rique des \'{e}quations aux d\'{e}riv\'{e}es partielles},
	publisher = {Masson},
	series = {Collection math\'{e}matiques appliqu\'{e}es pur la ma\^{i}trise sous la direction de P. G. Ciarlet et J. L. Lions},
	year = {1992},
	type = {Book}
}

@book{Roubicek-2013,
	author = {Roub\'{i}$\breve{\text{c}}$ek, T.},
	title = {Nonlinear partial differential equations with applications},
	publisher = {Birkh\"{a}user},
	volume = {153},
	edition = {2nd.},
	series = {International Series of Numerical Mathematics},
	year = {2013},
	type = {Book}
}

@book{Touzani-Rappaz-2014,
	author = {Touzani, R. and Rappaz, J.},
	title = {Mathematical Models for Eddy Currents and Magnetostatics: With Selected Applications},
	publisher = {Springer},
	series = {Scientific Computation},
	year = {2014},
	type = {Book}
}

@article {MR4748207,
    AUTHOR = {Diening, L. and Rolfes, J. and Salgado, A.J.},
     TITLE = {Pointwise gradient estimate of the {R}itz projection},
   JOURNAL = {SIAM J. Numer. Anal.},
  FJOURNAL = {SIAM Journal on Numerical Analysis},
    VOLUME = {62},
      YEAR = {2024},
    NUMBER = {3},
     PAGES = {1212--1225},
      ISSN = {0036-1429,1095-7170},
   MRCLASS = {65N30 (65N12 65N80)},
  MRNUMBER = {4748207},
       DOI = {10.1137/23M1571800},
       URL = {https://doi.org/10.1137/23M1571800},
}

@article {MR448949,
    AUTHOR = {Brezzi, F. and Hager, W.W. and Raviart, P.-A.},
     TITLE = {Error estimates for the finite element solution of variational
              inequalities},
   JOURNAL = {Numer. Math.},
  FJOURNAL = {Numerische Mathematik},
    VOLUME = {28},
      YEAR = {1977},
    NUMBER = {4},
     PAGES = {431--443},
      ISSN = {0029-599X,0945-3245},
   MRCLASS = {65N30},
  MRNUMBER = {448949},
MRREVIEWER = {Gilbert\ Strang},
       DOI = {10.1007/BF01404345},
       URL = {https://doi.org/10.1007/BF01404345},
}

@article {MR391502,
    AUTHOR = {Falk, R.S.},
     TITLE = {Error estimates for the approximation of a class of
              variational inequalities},
   JOURNAL = {Math. Comput.},
  FJOURNAL = {Mathematics of Computation},
    VOLUME = {28},
      YEAR = {1974},
     PAGES = {963--971},
   MRCLASS = {65K05 (35J30)},
  MRNUMBER = {391502},
MRREVIEWER = {F.\ Odeh},
       URL =
              {http://links.jstor.org/sici?sici=0025-5718(197410)28:128<963:EEFTAO>2.0.CO;2-2&origin=MSN},
}

@article {MR3393323,
    AUTHOR = {Nochetto, R.H. and Ot\'arola, E. and Salgado, A.J.},
     TITLE = {Convergence rates for the classical, thin and fractional
              elliptic obstacle problems},
   JOURNAL = {Philos. Trans. Roy. Soc. A},
  FJOURNAL = {Philosophical Transactions of the Royal Society A.
              Mathematical, Physical and Engineering Sciences},
    VOLUME = {373},
      YEAR = {2015},
    NUMBER = {2050},
     PAGES = {20140449, 14},
      ISSN = {1364-503X,1471-2962},
   MRCLASS = {65K15 (35J86)},
  MRNUMBER = {3393323},
MRREVIEWER = {Boualem\ Alleche},
       DOI = {10.1098/rsta.2014.0449},
       URL = {https://doi.org/10.1098/rsta.2014.0449},
}

@book {MR1009785,
    AUTHOR = {Friedman, A.},
     TITLE = {Variational principles and free-boundary problems},
   EDITION = {Second},
 PUBLISHER = {Robert E. Krieger Publishing Co., Inc., Malabar, FL},
      YEAR = {1988},
     PAGES = {x+710},
      ISBN = {0-89464-263-4},
   MRCLASS = {35R35 (35J85 49A29)},
  MRNUMBER = {1009785},
}

@book {MR880369,
    AUTHOR = {Rodrigues, J.-F.},
     TITLE = {Obstacle problems in mathematical physics},
    SERIES = {North-Holland Mathematics Studies},
    VOLUME = {134},
      NOTE = {Notas de Matem\'atica, 114. [Mathematical Notes]},
 PUBLISHER = {North-Holland Publishing Co., Amsterdam},
      YEAR = {1987},
     PAGES = {xvi+352},
      ISBN = {0-444-70187-7},
   MRCLASS = {35-02 (35J85 35R35 49A29)},
  MRNUMBER = {880369},
MRREVIEWER = {Maurizio\ Chicco},
}

@book {MR653144,
    AUTHOR = {Bensoussan, A. and Lions, J.-L.},
     TITLE = {Applications of variational inequalities in stochastic
              control},
    SERIES = {Studies in Mathematics and its Applications},
    VOLUME = {12},
      NOTE = {Translated from the French},
 PUBLISHER = {North-Holland Publishing Co., Amsterdam-New York},
      YEAR = {1982},
     PAGES = {xi+564},
      ISBN = {0-444-86358-3},
   MRCLASS = {49A29 (49-02)},
  MRNUMBER = {653144},
}

@article {MR2869035,
    AUTHOR = {Demlow, A. and Leykekhman, D. and Schatz, A. H. and Wahlbin,
              L. B.},
     TITLE = {Best approximation property in the {$W^{1}_{\infty}$} norm for
              finite element methods on graded meshes},
   JOURNAL = {Math. Comp.},
  FJOURNAL = {Mathematics of Computation},
    VOLUME = {81},
      YEAR = {2012},
    NUMBER = {278},
     PAGES = {743--764},
      ISSN = {0025-5718,1088-6842},
   MRCLASS = {65N30 (65N15 65N50)},
  MRNUMBER = {2869035},
MRREVIEWER = {Nicolae\ Pop},
       DOI = {10.1090/S0025-5718-2011-02546-9},
       URL = {https://doi.org/10.1090/S0025-5718-2011-02546-9},
}
\end{document}